\newtheorem{theorem}{Theorem}[section]
\newtheorem{definition}[theorem]{Definition}
\newtheorem{lemma}[theorem]{Lemma}
\newtheorem{corollary}[theorem]{Corollary}
\newtheorem{proposition}[theorem]{Proposition}
\def\llbracket{[\hspace{-.10em} [ }
\def\rrbracket{ ] \hspace{-.10em}]}
\def\pp{{\mathcal P}}
\def\t{{\mathcal T}}
\def\z{{\mathcal Z}}
\def\n{{\mathcal N}}
\def\w{{\mathrm w}}
\def\g{{\mathcal G}}
\def\y{{\mathcal Y}}
\def\x{{\mathcal X}}
\def\r{{\mathcal R}}
\def\b{{\mathcal B}}
\def\ve{\varepsilon}
\def\wt{\widetilde}
\def\wh{\widehat}
\def\bm{{\bf m}}
\def\la{\longrightarrow}
\def\T{{\mathbb T}}
\def\K{{\mathbb K}}
\def\N{{\mathbb N}}
\def\E{{\mathbb E}}
\def\P{{\mathbb P}}
\def\R{{\mathbb R}}
\def\ind{\mathbf{1}_}
\def\rd{\mathrm{d}}
\def\build#1_#2^#3{\mathrel{
\mathop{\kern 0pt#1}\limits_{#2}^{#3}}}
\def\rem{\noindent{\bf Remark. }}
\title{The hull process of the Brownian plane}
\author{Nicolas Curien and Jean-Fran\c cois Le Gall}
\date{\small \it Universit\'e Paris-Sud}
\begin{document}
\maketitle

\begin{abstract}
We study the random metric space called the Brownian plane, which is closely related to the Brownian map
and is conjectured to be the 
universal scaling limit of many discrete random lattices such as the uniform infinite planar triangulation.
We obtain a number of explicit distributions for the Brownian plane. In particular, we consider, for every
$r>0$, the hull of radius $r$, which is obtained by ``filling in the holes'' in the ball of radius $r$ centered
at the root. We 
introduce a quantity $Z_r$ which is interpreted as the (generalized) length of the boundary of the 
hull of radius $r$. We identify the law of the process $(Z_r)_{r>0}$ as the  time-reversal of a 
continuous-state branching process starting from $+\infty$ at time $-\infty$ and conditioned to
hit $0$ at time $0$, and we give an explicit description of the process of hull volumes
given the process $(Z_r)_{r>0}$. We obtain an explicit formula for the Laplace transform of the volume of the hull of radius $r$,
and we also determine the conditional distribution of this volume given the 
length of the boundary. Our proofs involve certain new formulas for
super-Brownian motion and the Brownian snake in dimension one, which are
of independent interest.
\end{abstract}

\section{Introduction}

Much recent work has been devoted to understanding continuous limits of random graphs
drawn on the two-dimensonal sphere or in the plane, which are called random planar maps. 
A fundamental object is the random compact metric space known as the Brownian map, which has been proved to be the
universal scaling limit of several important classes of random planar maps conditioned to
have a large size (see in particular \cite{Ab,AA,BJM,LGU,Mie}). 
The main goal of this work is to study the random (non-compact) metric space called the Brownian plane, which 
may be viewed as
an infinite-volume version of the Brownian map. The Brownian plane was first introduced and
studied in \cite{CLG}, where it was shown to be the scaling limit in distribution of the uniform infinite planar quadrangulation (UIPQ) in the 
local Gromov-Hausdorff sense. The Brownian plane is in fact conjectured
to be the universal scaling limit of  many discrete random lattices
including the uniform infinite planar triangulation (UIPT) introduced by Angel and Schramm
\cite{AS} and studied then by several authors. It was proved in \cite{CLG} that the Brownian plane is locally isometric 
to the Brownian map, in the following sense. Recalling that both the Brownian map and the Brownian plane
are equipped with a distinguished point called the root, 
one can couple these two random metric spaces in such a way that, for every $\delta>0$, 
there exists $\ve>0$ such that the balls of radius $\ve$ centered at the root in the two spaces are isometric with probability
at least $1-\delta$. As a consequence, the Brownian plane shares many properties of the Brownian map.
On the other hand, the Brownian plane also
enjoys the important additional property of invariance under scaling: Multiplying the distance 
by a constant factor $\lambda >0$ does not change the distribution of the Brownian plane. This property
suggests that the Brownian plane should be more tractable for calculations than the Brownian map, for which
very few explicit distributions are known. Our purpose is to obtain such explicit distributions for the Brownian plane, and
in particular to give a detailed probabilistic description of the growth of ``hulls'' centered at the root.

In order to give a more precise presentation of our results, let us introduce some notation. As in \cite{CLG}, we write $(\pp_\infty, D_\infty)$
for the Brownian plane, and we let $\rho_\infty$ stand for the distinguished point of $\pp_\infty$ called the root. We recall that
$\pp_\infty$ is equipped with a volume measure, and we write $|A|$ for the volume of a measurable subset 
of $\pp_\infty$. For every
$r> 0$, the closed ball of radius $r$ centered at $\rho_\infty$ in $\pp_\infty$ is denoted by $B_r(\pp_\infty)$. In contrast with
the case of Euclidean space, the complement of $B_r(\pp_\infty)$ will have infinitely many connected components
(see \cite{LGcactus} for a detailed discussion of these components in the slightly different setting of the Brownian map)
but only one unbounded connected component.
We then define
the hull of radius $r$
 as the complement of the unbounded component of the complement of $B_r(\pp_\infty)$, and
 we denote this hull by $B^\bullet_r(\pp_\infty)$. Informally, $B^\bullet_r(\pp_\infty)$ is obtained by
  ``filling in the holes'' of $B_r(\pp_\infty)$ - see Fig.~1 below, and Fig.~3 in Section 5  for a discrete version of the hull.
  
 In what follows, we give a complete description of the
  law of the process $(|B^\bullet_r(\pp_\infty)|)_{r>0}
 $. To formulate this description, it is convenient to introduce another process $(Z_r)_{r>0}$ which gives for
 every $r>0$ the size of the boundary of $B^\bullet_r(\pp_\infty)$.
 
\begin{proposition}
\label{approx-exit}
Let $r>0$. There exists a positive random variable $Z_r$ such that
$$\lim_{\ve\to 0} \ve^{-2}|B_r^\bullet(\mathcal{P}_\infty)^c\cap B_{r+\ve}(\mathcal{P}_\infty)|= Z_r$$
in probability.
\end{proposition}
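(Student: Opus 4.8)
The plan is to rely on the coding of the Brownian plane by a Brownian-snake-type object recalled from \cite{CLG}, together with the classical approximation of exit measures of the Brownian snake by occupation times of thin neighbourhoods. Recall that $\pp_\infty$ can be built from a random labelled infinite tree (a CRT-like tree carrying a semi-infinite spine), the label at a vertex $a$ — normalised to be nonnegative and to vanish at $\rho_\infty$ — being equal to $D_\infty(\rho_\infty,a)$, while the volume measure on $\pp_\infty$ is the image of the length (local time) measure on the tree. Under this coding, $B_r(\pp_\infty)$ is the image of $\{a:\ \mathrm{label}(a)\le r\}$, and $B^\bullet_r(\pp_\infty)^c$ is the image of the single connected component $\t_r$ of $\{a:\ \mathrm{label}(a)>r\}$ that contains the far end of the spine (all other such components being bounded ``holes'' that are filled in to form the hull, consistently with the fact, recalled above, that the complement of $B_r(\pp_\infty)$ has exactly one unbounded component). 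The boundary cycle $\partial B^\bullet_r(\pp_\infty)$ is then the image of the label-$r$ bottom of $\t_r$, and for $x$ in the complement of the hull one has $\mathrm{dist}(x,\partial B^\bullet_r(\pp_\infty))=\mathrm{label}(x)-r$, so that $D_\infty(\rho_\infty,x)=r+\mathrm{dist}(x,\partial B^\bullet_r(\pp_\infty))$. Hence
$$B^\bullet_r(\pp_\infty)^c\cap B_{r+\ve}(\pp_\infty)=\{x\in B^\bullet_r(\pp_\infty)^c:\ \mathrm{label}(x)\le r+\ve\}$$
is the image, inside $\t_r$, of the vertices of label between $r$ and $r+\ve$, i.e. an ``$\ve$-shell'' sitting just above level $r$; its volume is the occupation time, by the corresponding snake times, of this shell.

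The key point is that $\ve^{-2}$ times the volume of this $\ve$-shell is exactly an exit-measure-type approximation. The bottom of $\t_r$ carries a natural ``length'', which I call $Z_r$: it is the total mass $\langle\z_r,1\rangle$ of an exit measure $\z_r$ describing the crossing of level $r$ by the snake paths coding $\t_r$. Decomposing the snake along the spine and applying the classical fact that the exit measure from a domain $D$ is the $L^2$-limit, hence the limit in probability, of $\ve^{-2}$ times the occupation time — by the snake paths that have not yet left $D$ — of the inner $\ve$-neighbourhood of $\partial D$, one obtains
$$\lim_{\ve\to0}\ve^{-2}\,\big|B^\bullet_r(\pp_\infty)^c\cap B_{r+\ve}(\pp_\infty)\big|=Z_r\qquad\text{in probability.}$$
Positivity of $Z_r$ is immediate, since the snake excursions that cross level $r$ have a.s. positive exit measure.

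The main obstacle is to make this exit-measure identification precise: because $B^\bullet_r(\pp_\infty)^c$ is not the image of all of $\{\mathrm{label}>r\}$ but only of the spine component $\t_r$, the relevant ``domain'' is not simply $\{\mathrm{label}<r\}$, and it must instead be described through the subtree-of-the-spine decomposition of the hull from \cite{CLG} (equivalently, through the behaviour of the label along the historical paths after their last return to level $r$); one then has to check that the classical $\ve^{-2}$-occupation approximation still applies in this slightly non-standard setting, and that the resulting sum over the (locally finitely, but globally infinitely, many) snake excursions attached to the spine above level $r$ can be controlled, which calls for a truncation/uniform-integrability argument — the scaling invariance of $\pp_\infty$, which forces $Z_r\overset{(d)}{=}r^2Z_1$, being a useful consistency check. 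An essentially equivalent route is to realise the $\ve$-shell as a Poissonian collection of small labelled subtrees attached along $\partial B^\bullet_r(\pp_\infty)$, to compute its conditional mean and variance given $Z_r$, and to conclude by a law of large numbers; the delicate point there is to show that the conditional mean is asymptotic to $\ve^2 Z_r$ — not merely proportional to it — which again reduces to a scaling property of the Brownian snake excursion measure restricted to excursions of small height.
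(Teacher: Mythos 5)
Your strategy coincides with the paper's: represent $\pp_\infty$ by the labelled infinite tree of Section 3, use \eqref{formula-hull} to identify $B_r^\bullet(\pp_\infty)^c\cap B_{r+\ve}(\pp_\infty)$ with the set of tree points whose label lies in $(r,r+\ve]$ and whose ancestral labels stay above $r$, define $Z_r$ by \eqref{size-boundary} as the sum of the exit measures from $(r,\infty)$ of the subtrees grafted on the spine above the last passage time $L_r$, and conclude by a Poissonian law of large numbers. (One attribution to fix: the coding with nonnegative labels equal to $D_\infty(\rho_\infty,\cdot)$ is \emph{not} the construction of \cite{CLG}, where labels are ``distances from infinity''; it is the new construction of Section 3 of the present paper, and proving its validity is a substantial preliminary step that your argument presupposes.)

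The one genuine gap is in how you propose to run the law of large numbers. The $\ve$-shell consists of points whose entire ancestral path stays in $(r,\infty)$, so it is measurable with respect to the truncation of the labelled tree at level $r$ (the $\sigma$-field $\g_r$); consequently it is \emph{not} a Poissonian functional conditionally on $Z_r$, and the ``conditional mean and variance given $Z_r$'' computation of your alternative route cannot be carried out at level $r$. The decomposition has to be made at level $r+\ve$: by the special Markov property, conditionally on $Z_{r+\ve}$ the contribution of the subtrees grafted above $L_{r+\ve}$ is $U_\ve(Z_{r+\ve})$ for a subordinator $U_\ve$ with $E[U_\ve(t)]=\ve^2 t$ (the exact first moment comes from the Green function of Brownian motion started at $r+\ve$ and killed at $r$), and scaling plus the law of large numbers give $\ve^{-2}U_\ve(Z_{r+\ve})-Z_{r+\ve}\to 0$ in probability. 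This produces $Z_{r+\ve}$, not $Z_r$, in the limit; the missing ingredient is the convergence $Z_{r+\ve}\to Z_r$ in probability, i.e.\ the right-continuity of the boundary-length process, which the paper obtains only after identifying $(Z_a)_{a>0}$ as a time-reversed continuous-state branching process (Proposition~\ref{process-exit}) --- this is why the approximation result is proved \emph{last} in subsection~\ref{exit-process} --- and which no scaling property of $\N_x$ restricted to small excursions will give you. There remains also the separate, easier, step of showing that the subtrees grafted between $L_r$ and $L_{r+\ve}$ contribute $o(\ve^2)$.
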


\begin{figure}[!h]
\label{d-hull}
 \begin{center}
 \includegraphics[width=10cm]{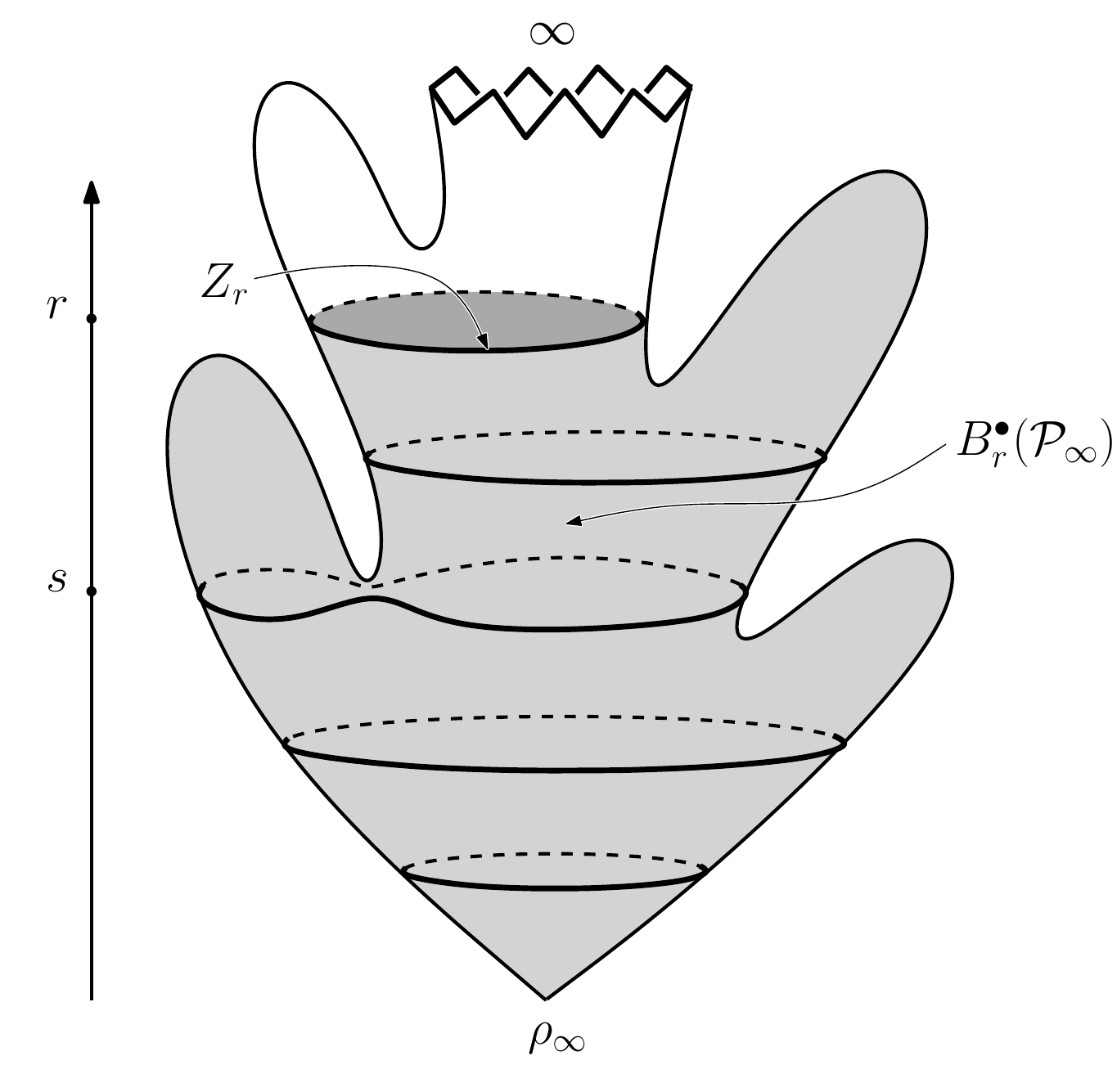}
 \caption{Illustration of the geometric meaning of the processes $(Z_r)_{r\geq
0}$ and $(|B_r^\bullet(\mathcal{P}_\infty)|)_{r\geq 0}$. The Brownian
plane is represented as a two-dimensional ``cactus'' where the height
of each point is equal to its distance to the root. The shaded part represents the 
hull $B_r^\bullet(\mathcal{P}_\infty)$. At time $s$, both
processes $Z_\cdot$ and $ |B_{\cdot}^\bullet( \mathcal{P}_\infty)|$
have a jump.  Geometrically this corresponds to the creation of a
``bubble'' above height $s$.}
 \end{center}
 \vspace{-2mm}
 \end{figure}

  In view of this proposition, one interprets $Z_r$ as the (generalized) length of the boundary of the 
hull of radius $r$ (this boundary is expected to be a fractal curve of dimension $2$). A key intermediate step in the derivation of our main results is to identify 
the process $(Z_r)_{r>0}$ as a time-reversed continuous-state branching process. 
For every $u\geq 0$, set $\psi(u)=\sqrt{8/3}\,u^{3/2}$. 
The 
continuous-state branching process with branching mechanism $\psi$ is
the Feller Markov process $(X_t)_{t\geq 0}$ with values in $\R_+$, whose semigroup is characterized as
follows: for
every $x,t\geq 0$ and every $\lambda> 0$,
$$E[e^{-\lambda X_t}\mid X_0=x]= \exp\Big(-x\Big(\lambda^{-1/2} + \sqrt{2/3}\;t\Big)^{-2}\Big).$$
See subsection \ref{CSBPpreli} for a brief discussion of this process. 
Note that $X$ gets absorbed at $0$ in finite time. It is easy to construct a process
$(\wt X_t)_{t\leq 0}$ indexed by the time interval $(-\infty,0]$ and which is distributed
as the process $X$ ``started from $+\infty$'' at time $-\infty$ and conditioned to hit zero at time $0$
(see subsection \ref{CSBPpreli} for a more rigorous presentation). 

\begin{proposition}
\label{process-exit}
{\rm (i)} For every $r>0$, we have for every $\lambda\geq 0$,
$$E\Big[ \exp(-\lambda Z_r)]= \Big(1+\frac{2\lambda r^2}{3}\Big)^{-3/2}.$$
Equivalently, $Z_r$ follows a Gamma distribution with parameter $\frac{3}{2}$ and mean $r^2$.

\noindent{\rm (ii)} The two processes $(Z_r)_{r>0}$ and $(\wt X_{-r})_{r>0}$ have the same finite-dimensional marginals.

\end{proposition}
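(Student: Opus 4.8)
The plan is to obtain both statements from an explicit description of the Brownian plane in terms of a Brownian snake driven by a normalized tree, or more efficiently, by exploiting the known absolute-continuity relationship between the Brownian plane and the Brownian map together with the re-rooting invariance and scaling properties mentioned in the introduction. The heart of the argument is to relate the quantity $Z_r$ — the generalized boundary length of the hull of radius $r$ — to exit measures of super-Brownian motion or the Brownian snake in dimension one. Concretely, I would first use the construction of $(\pp_\infty,D_\infty)$ from a one-dimensional Brownian snake (as in \cite{CLG}): distances from the root are encoded by a process whose values near level $-r$ (in the appropriate Bismut-type decomposition) govern the geometry of the hull of radius $r$. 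The boundary length $Z_r$ then corresponds, up to a deterministic constant, to the total mass of the exit measure of the snake from the interval below level $-r$ (equivalently, the exit measure at $-r$ of a one-dimensional super-Brownian motion started suitably at the origin).

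For part (i), once $Z_r$ is identified with such an exit-measure mass, its Laplace transform is computed by solving the classical semilinear ODE $\frac{1}{2}u'' = \psi(u)$ with $\psi(u)=\sqrt{8/3}\,u^{3/2}$, on the half-line of length $r$, with boundary data $\lambda$ at the endpoint and the canonical singular boundary condition at the other end dictated by the infinite-volume (non-compact) nature of $\pp_\infty$. This ODE has explicit solutions — with $\psi(u) = c\,u^{3/2}$ the relevant solution behaves like a constant multiple of (distance)$^{-4}$, which is exactly what produces the exponent $-3/2$ and the quadratic scale $r^2$. Carrying out the substitution and matching boundary conditions should yield
$$E[\exp(-\lambda Z_r)] = \Big(1+\tfrac{2\lambda r^2}{3}\Big)^{-3/2},$$
and recognizing the right-hand side as the Laplace transform of a Gamma$(3/2)$ variable with mean $r^2$ is then immediate. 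Scaling invariance of $\pp_\infty$ gives for free that $Z_r \overset{(d)}{=} r^2 Z_1$, so in fact only the case $r=1$ needs the ODE computation.

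For part (ii), I would establish the identity of finite-dimensional marginals by a consistency/Markov argument. The scaling relation already forces the one-dimensional marginals to match those of $\wt X_{-r}$ (the time-reversed CSBP conditioned to hit $0$ at time $0$ has, by construction in subsection \ref{CSBPpreli}, the correct one-dimensional law, which one checks against (i)). For the multidimensional statement, the key is a \emph{branching property in $r$}: conditionally on $Z_r$, the ``holes'' filled in to form $B^\bullet_r(\pp_\infty)$ decompose, by the spatial Markov property of the Brownian snake / the special Markov property for exit measures, into independent pieces whose further boundary-length processes are governed by the CSBP with mechanism $\psi$ run for an additional time. This is precisely the branching/Markov structure of $(\wt X_{-r})$: given the value at level $-r$, the evolution at deeper levels is that of a CSBP started from that value. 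Combining the Markov property in $r$ with the explicit transition Laplace transform of $X$ displayed in the excerpt, and the one-dimensional identification from (i), pins down all finite-dimensional distributions by induction on the number of time points.

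The main obstacle I anticipate is the rigorous identification of $Z_r$ with an exit-measure mass — i.e., making Proposition \ref{approx-exit} quantitative enough to recognize the limiting random variable inside the snake construction, and establishing the special Markov property for the hull decomposition in the infinite-volume setting, including the correct handling of the singular boundary condition coming from non-compactness. Once that structural input is in place, the ODE computation in (i) and the inductive marginal-matching in (ii) are essentially routine.
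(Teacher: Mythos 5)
The genuine gap is in part (i). Your plan identifies $Z_r$, ``up to a deterministic constant,'' with the total exit-measure mass of a one-dimensional Brownian snake/super-Brownian motion below level $-r$, and then solves a boundary-value problem for $\tfrac12 u''=\psi(u)$ with $\psi(u)=\sqrt{8/3}\,u^{3/2}$. This cannot work as stated. First, the exit measure of a superprocess started from a point mass $u\delta_x$ has Laplace transform $\exp(-u\,\N_x(1-e^{-\lambda \z_a}))$, and by \eqref{Laplaceexit} the exponent stays bounded as $\lambda\to\infty$, so such a variable has an atom at $0$; the target law $(1+\tfrac{2\lambda r^2}{3})^{-3/2}$ has none, so no deterministic (or fixed point-mass) normalization can produce it. Second, the second-order ODE governing snake exit measures is $u''=4u^2$ (quadratic branching mechanism $2u^2$); the mechanism $\sqrt{8/3}\,u^{3/2}$ only appears in the \emph{first-order} flow $\partial_t u_t(\lambda)=-\psi(u_t(\lambda))$ of the level-indexed CSBP of exit-measure masses, so your heuristic deriving the exponent $-3/2$ from a $(\hbox{distance})^{-4}$ solution of $\tfrac12 u''=\sqrt{8/3}\,u^{3/2}$ points at the wrong equation. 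What is actually needed, and what the paper does, is the spine decomposition: $Z_r$ is a Poisson superposition of exit measures $\z_r(\omega^i)$ of the excursions grafted on the nine-dimensional Bessel spine $R$ \emph{above its last passage time $L_r$ at level $r$}, each conditioned on $\{\r\subset(0,\infty)\}$. The computation then has two layers: $\N_x(\ind{\{\r\subset(0,\infty)\}}(1-e^{-\lambda\z_r}))$ via the special Markov property (Lemma \ref{Laplace1}), and the expectation over the spine of $\exp\bigl(6\int_{L_r}^\infty(\frac{1}{R_t^2}-\frac{1}{(R_t-\alpha)^2})\,\rd t\bigr)$, evaluated by time reversal to a Bessel process of dimension $-5$ and an absolute-continuity relation (Lemma \ref{Laplace2}). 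It is this last-passage/exponential-functional step that produces the cube, i.e.\ the exponent $-3/2$; your proposal omits it entirely.

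For part (ii) your architecture is essentially the paper's (special Markov property $\Rightarrow$ Markov property of $Z$ backward in $r$, explicit transition Laplace transforms, matching with the CSBP), but two points need care. The conditional law of $(Z_{r-a})_{0\le a\le r}$ given $Z_r=x$ is the CSBP started from $x$ \emph{conditioned on extinction at time $r$} -- the $h$-transform by the extinction-time density $\phi_{r-t}$ -- not the plain CSBP ``run for an additional time''; this is exactly what Propositions \ref{condi-Markov} and \ref{conditionedCSBP} establish and then match against the corresponding property of $\wt X$. And the one-dimensional law of $\wt X_{-r}$ is not correct ``by construction'': the paper must compute $E_x[f(X_{T-a})\ind{\{a\leq T\}}]=E_x[\int_0^\infty f(X_t)\phi_a(X_t)\,\rd t]$ and let $x\uparrow\infty$ to verify that $\wt X_{-a}$ is Gamma$(3/2)$ with mean $a^2$, matching (i). With those repairs, and with (i) proved correctly, your inductive marginal-matching goes through.
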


We observe that  results closely related to Proposition \ref{process-exit} have been obtained by Krikun \cite{Kr1,Kr2} in the discrete setting of the
UIPT and the UIPQ. 

Part (ii) of the preceding proposition implies that the process $(Z_r)_{r>0}$ has a c\`adl\`ag modification, with only
negative jumps, and from now on we deal with this modification. We can now state the
main results of the present work. For every $r>0$, we write $\Delta Z_r$ for the jump of $Z$ at time $r$.

\begin{theorem}
\label{hull-process-description}
Let $s_1,s_2,\ldots$ be a measurable enumeration of the jumps of $Z$, and let
$\xi_1,\xi_2,\ldots$ be a sequence of i.i.d. real random variables with density
$$\frac{1}{\sqrt{2\pi x^5}}\,e^{-1/2x}\,\ind{(0,\infty)}(x),$$
which is independent of the process $(Z_r)_{r> 0}$.
The following identity in distribution of random processes holds:
$$\Big( Z_r, |B^\bullet_r(\pp_\infty)|\Big)_{r>0}
\build{=}_{}^{\rm (d)} \Big(Z_r,\sum_{i:s_i\leq r} \xi_i\,(\Delta Z_{s_i})^2\Big)_{r>0}.$$
\end{theorem}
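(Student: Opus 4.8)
The plan is to analyze the Brownian plane via its well-known construction from the Brownian snake / Brownian tree, so that both the boundary-length process $(Z_r)$ and the hull-volume process $(|B^\bullet_r(\pp_\infty)|)$ become explicit functionals of a labeled tree. Recall that $\pp_\infty$ is obtained from an infinite random tree equipped with Brownian labels (playing the role of distances to the root $\rho_\infty$), and that there is a spine going to infinity along which subtrees are grafted. For $a<0$, the level set $\{a\}$ of the labels cuts the tree into a main piece (the part of the tree whose vertices have label $\ge a$ and which is connected to the spine at level $a$) together with excursions below level $a$; the hull $B^\bullet_r(\pp_\infty)$ is precisely the image under the quotient map of the subtree of vertices with label above the minimum level $-r$ reached in the appropriate sense, i.e. after shifting labels so that the root has label $0$ and distances increase, one works at level $r$ from the top. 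The first step is thus to make rigorous the identification: $Z_r$ is (a constant times) the exit measure of the Brownian snake at level $r$, and the hull volume $|B^\bullet_r(\pp_\infty)|$ is the mass of the labeled tree restricted to labels $\le r$ (in the distance normalization), using the results of \cite{CLG} and the spinal decomposition.

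The second step is a Markovian / branching-property argument for the structure of hull increments. Conditionally on $(Z_u)_{u\le r}$, the ``holes'' of $B^\bullet_r(\pp_\infty)$ — the connected components of its complement other than the unbounded one — are created exactly at the jump times $s_i$ of $Z$, each jump $\Delta Z_{s_i}$ corresponding to a sub-excursion of the Brownian snake that detaches from the spine at level $s_i$ with exit measure $-\Delta Z_{s_i}$ at that level (the minus sign because $Z$ has only negative jumps; one should double-check the sign convention, presumably $\Delta Z_{s_i}<0$ and the relevant positive quantity is $|\Delta Z_{s_i}|$, squared below). By the special Markov property of the Brownian snake (in the form used by Le Gall--Weill and in \cite{LGcactus}), given the exit measures these detached excursions are independent, and each one is a Brownian snake excursion conditioned on its exit measure at its initial level. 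Therefore $|B^\bullet_r(\pp_\infty)|$ splits as a sum, over jumps $s_i\le r$, of the volumes $V_i$ of these bubbles, plus the volume of the ``stable'' part near the spine — and this last part should be shown to be zero (the boundary is a fractal of dimension $2$ but carries no volume, so the hull volume is entirely accounted for by the bubbles).

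The third, and I expect main, step is to compute the law of a single bubble volume $V_i$ given its boundary length $z:=|\Delta Z_{s_i}|$, and to show it equals that of $\xi_i\, z^2$ with $\xi_i$ having the stated density $\frac{1}{\sqrt{2\pi x^5}}e^{-1/(2x)}\ind{(0,\infty)}(x)$ — equivalently, $1/V_i$ is, given $z$, distributed as $z^{-2}$ times an inverse-Gamma$(3/2,1/2)$, i.e. $(\xi_i)^{-1}$ has density $\propto x^{1/2}e^{-x/2}$, a Gamma$(3/2,1/2)$ law. Concretely one must evaluate the joint Laplace transform, under the Itô measure of the Brownian snake, of (exit measure at level $0$, total mass of the snake conditioned to stay above the exit level), i.e. a formula of the type $\mathbb{N}_0\!\left[e^{-\mu\,\langle\text{exit measure}\rangle - \lambda\,(\text{mass})}\right]$ for the one-dimensional Brownian snake. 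This is where the ``new formulas for super-Brownian motion and the Brownian snake in dimension one'' advertised in the abstract enter: one derives, via the exit-measure PDE / the connection with solutions of $\tfrac12 u'' = \psi(u) = \sqrt{8/3}\,u^{3/2}$ with appropriate boundary behaviour, an explicit expression, and then reads off the conditional density by a Laplace inversion. Scaling invariance of the snake pins down the $z^2$ homogeneity for free, reducing the problem to the single computation at $z=1$; the genuine work is the explicit ODE solution and the recognition of the resulting Laplace transform $(1+2\lambda)^{-3/2}$-type expression as that of $\xi z^2$ — I would expect $\mathbb{E}[e^{-\lambda \xi}] = $ something like $(1+\sqrt{2\lambda})e^{-\sqrt{2\lambda}}$, matching the given density. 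Finally, one assembles: conditionally on $(Z_r)_{r>0}$, the bubble volumes are independent with the just-computed conditional laws, hence jointly distributed as $(\xi_i (\Delta Z_{s_i})^2)_i$ for i.i.d.\ $\xi_i$ independent of $Z$, and summing over $s_i\le r$ gives the claimed process identity, with the almost-sure summability of $\sum_{s_i\le r}\xi_i(\Delta Z_{s_i})^2$ following from finiteness of $|B^\bullet_r(\pp_\infty)|$.
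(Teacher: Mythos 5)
Your overall picture --- each jump of $Z$ corresponds to a swallowed bubble, the bubble volumes are conditionally independent given $Z$, and a single bubble of boundary length $z$ has volume $\xi z^2$ with $E[e^{-\beta\xi}]=(1+\sqrt{2\beta})e^{-\sqrt{2\beta}}$ --- is exactly the heuristic stated in the introduction of the paper, and your guess for the Laplace transform of $\xi$ is the one that appears in the actual computation. But the step you present as routine, namely ``by the special Markov property, given the exit measures these detached excursions are independent'' and hence the hull volume splits into independent bubbles \emph{conditionally on the whole process} $(Z_r)_{r>0}$, is a genuine gap: it is essentially the statement of the theorem, and the special Markov property does not deliver it. That property gives, for a \emph{single fixed level} (or a stopping level), the conditional independence of the excursions outside $(-a,\infty)$ given the $\sigma$-field $\mathcal{E}_a$; it says nothing directly about the jump structure of the level-indexed exit-measure process, nor about conditioning simultaneously on the exit measures at \emph{all} levels, nor about the degenerate conditioning of a snake excursion on the exact value of its exit measure. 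Identifying each jump $\Delta Z_{s_i}$ with a specific sub-excursion and extracting its conditional volume law would require a limiting/disintegration argument that your sketch does not supply; likewise the claim that the ``stable part'' contributes zero volume (i.e.\ that $|B^\bullet_r|$ is a purely discontinuous function of $r$) is nontrivial and is not proved by dimension heuristics about the boundary.

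The paper circumvents all of this analytically. It introduces, for a super-Brownian motion started from $r\delta_0$, the pair $(\mathscr{Z}_{-a},\mathscr{Y}_{-a})$ (total exit measure at level $-a$, and total mass of historical paths not reaching $-a$), computes its joint Laplace transform $\exp(-r\,u_{\lambda,\mu}(a))$ by solving the ODE $\frac12 u''=2u^2-\mu$ (Lemma \ref{joint-Laplace}), and separately computes the Laplace transform of $\bigl(X_a,\sum_{s_i\le a}\xi_i(\Delta X_{s_i})^2\bigr)$ for the CSBP $X$ by an integration-by-parts/compensator argument using the L\'evy measure $\kappa(\rd y)=\sqrt{3/(2\pi)}\,y^{-5/2}\rd y$; matching the generators at $a=0$ gives Proposition \ref{jointfixedtime}, the Markov property upgrades it to a process identity (Corollary \ref{lawprocess}), and the transfer to $(Z_r,|B^\bullet_r|)$ is done by conditioning on the extinction time and time-reversing. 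In other words, the jump decomposition you want to \emph{assume} is obtained as an output of a Laplace-transform identity, not as an input from a path decomposition. If you want to salvage your route, you would need to first prove (some version of) Proposition \ref{jointfixedtime} anyway, so you should reorganize the argument around that computation rather than around a conditional independence claim you cannot currently justify.
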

  
This theorem identifies the conditional distribution of the process of hull volumes knowing
the process of hull boundary lengths, whose distribution is given by the preceding proposition.
Informally, each jump time $r$ of $Z$ corresponds to the creation of a new connected component of
the complement of the ball $B_r(\pp_\infty)$, which is ``swallowed'' by the hull, leading to
a negative jump for the boundary of the hull and a positive jump for its volume. The common distribution
of the variables $\xi_i$ should then be interpreted as the law of the volume of a
newly created connected component knowing that the ``length'' of its boundary is equal to $1$
(see \cite[Proposition 6.4]{Ang} for a related result
concerning the asymptotic distribution of the volume of a triangulation with a boundary
of size tending to infinity). This 
heuristic discussion is made much more precise in the companion paper \cite{CLG2}, where
many of the results of the present work are interpreted in terms of asymptotics for the so-called 
``peeling process'' studied by Angel \cite{Ang} for the UIPT. 

The proof of Theorem \ref{hull-process-description} depends on certain explicit calculations of
distributions, which are of independent interest.

\begin{theorem}
\label{laws-hull}
Let $r>0$. For every $\mu>0$,
$$E\Big[\exp(-\mu|B^\bullet_r(\pp_\infty)|)\Big] = 3^{3/2}\,\cosh((2\mu)^{1/4}r)
\,\Big(\cosh^2((2\mu)^{1/4}r) + 2\Big)^{-3/2}.$$
Furthermore, for every $\ell>0$,
\begin{align*}
&E\Big[\exp(-\mu|B^\bullet_r(\pp_\infty)|)\,\Big|\, Z_r=\ell\Big] \\
&\qquad= r^3(2\mu)^{3/4} \,\frac{\cosh((2\mu)^{1/4}r)}{\sinh^3((2\mu)^{1/4}r)}
\,\exp\Big(-\ell\Big(\sqrt{\frac{\mu}{2}}\Big(3 \coth^2((2\mu)^{1/4}r) -2\Big)-\frac{3}{2r^2}\Big)\Big).
\end{align*}
\end{theorem}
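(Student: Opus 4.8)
The plan is to derive both formulas of Theorem~\ref{laws-hull} from the representation of the pair $(Z_r,|B^\bullet_r(\pp_\infty)|)$ supplied by Theorem~\ref{hull-process-description}, together with the explicit description of the law of the jump process of $(Z_r)_{r>0}$ coming from Proposition~\ref{process-exit}(ii). Since $(Z_r)_{r>0}$ has the same finite-dimensional marginals as the time-reversed continuous-state branching process $(\wt X_{-r})_{r>0}$ with branching mechanism $\psi(u)=\sqrt{8/3}\,u^{3/2}$, its jumps form, conditionally on the path, a Poisson-type point configuration governed by the L\'evy measure of $\psi$, namely $\pi(\rd h)=c\,h^{-5/2}\,\rd h$ for an explicit constant $c$ (here $c=\sqrt{8/3}\cdot\frac{3}{4\sqrt\pi}=\sqrt{3/(2\pi)}$). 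The first step is therefore to write, using Theorem~\ref{hull-process-description} and the independence of the $\xi_i$,
\begin{equation*}
E\Big[\exp(-\mu|B^\bullet_r(\pp_\infty)|)\,\Big|\,(Z_s)_{s>0}\Big]
=\prod_{i:s_i\le r} E\big[\exp(-\mu\,\xi_i\,(\Delta Z_{s_i})^2)\big]
=\prod_{i:s_i\le r}\varphi\big(\mu(\Delta Z_{s_i})^2\big),
\end{equation*}
where $\varphi(\lambda)=E[e^{-\lambda\xi_1}]$ is the Laplace transform of the stable-type density $\tfrac{1}{\sqrt{2\pi x^5}}e^{-1/(2x)}\ind{(0,\infty)}(x)$. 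This density is a standard one-sided expression and one computes $\varphi(\lambda)=e^{-\sqrt{2\lambda}}$; hence each factor equals $\exp(-\sqrt{2\mu}\,|\Delta Z_{s_i}|)$ and the conditional Laplace transform is $\exp\big(-\sqrt{2\mu}\sum_{s_i\le r}|\Delta Z_{s_i}|\big)$. So everything reduces to understanding the Laplace transform of the sum of jump sizes of $Z$ on $(0,r]$, possibly jointly with $Z_r$ itself.

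The second step is to compute $E\big[\exp(-\beta\sum_{s_i\le r}|\Delta Z_{s_i}|)\big]$ and, for the conditional statement, $E\big[\exp(-\beta\sum_{s_i\le r}|\Delta Z_{s_i}|)\mid Z_r=\ell\big]$, with $\beta=\sqrt{2\mu}$. Here I would use the time-reversed CSBP picture: run the CSBP $X$ forward from a large level, condition it to die at a fixed time, reverse time, and exploit the exponential formula for the point process of jumps of a CSBP (equivalently, of the spectrally positive L\'evy process that is time-changed to produce $X$). Concretely, conditionally on the path $(Z_s)_{s\le r}$, the jumps form a Poisson point process on $(0,r]\times(0,\infty)$ with intensity $\rd s\,\big(\text{something depending on }Z_{s^-}\big)\,\pi(\rd h)$; for a CSBP the jump rate from state $z$ is $z\,\pi(\rd h)$, so by the exponential formula the relevant quantity is controlled by $\int_0^r Z_{s^-}\big(\int_0^\infty(1-e^{-\beta h})\pi(\rd h)\big)\rd s$ after an appropriate Esscher-type tilting. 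Since $\int_0^\infty(1-e^{-\beta h})\,c\,h^{-5/2}\rd h$ is again explicit (it is a constant times $\beta^{3/2}$, reflecting the $3/2$-stable branching), this turns into a question about the additive functional $\int_0^r Z_s\,\rd s$ of the time-reversed CSBP. The law of such functionals for CSBP is classically obtained by solving the associated Riccati ODE $u'=\psi(u)-\theta$ (for the Laplace parameter $\theta$ conjugate to $\int Z$), and for $\psi(u)=\sqrt{8/3}\,u^{3/2}$ this ODE is explicitly integrable, with solutions expressible through $\cosh$, $\sinh$, $\coth$ of $(2\mu)^{1/4}r$ — exactly the functions appearing in the statement. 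I would set up this ODE, impose the boundary behaviour dictated by "started from $+\infty$ at $-\infty$, conditioned to hit $0$ at $0$'' (which fixes the integration constant and produces the normalising prefactors $3^{3/2}$ and $r^3(2\mu)^{3/4}$), and read off both formulas; the conditional version follows by additionally tracking the Laplace parameter dual to $Z_r=\ell$, i.e. solving the same ODE with a second boundary condition at time $r$ and then dividing by the marginal density of $Z_r$ obtained from Proposition~\ref{process-exit}(i).

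The main obstacle I anticipate is the bookkeeping in the time-reversal and conditioning of the CSBP: making rigorous the statement that, under the conditioned reversed process $(\wt X_{-r})$, the jumps form a Poisson point process with the stated intensity, and that the exponential/Riccati machinery applies with the correct $h$-transform. The forward CSBP with branching mechanism $\psi$ is absorbed at $0$ in finite time, so "started from $+\infty$ at $-\infty$ and conditioned to hit $0$ at time $0$'' must be constructed as a suitable limit (scaling-invariance of $\psi$ helps here, and the construction is promised in subsection~\ref{CSBPpreli}); one then needs the Nagasawa-type time-reversal identity to transfer the jump structure. An alternative route that sidesteps some of this is to go back to the super-Brownian motion / Brownian snake representation underlying the Brownian plane and compute $E[\exp(-\mu|B^\bullet_r|)]$ directly from exit-measure formulas for the Brownian snake in dimension one — indeed the abstract advertises "new formulas for super-Brownian motion and the Brownian snake'' — but within the self-contained logic of this section the CSBP route via Theorem~\ref{hull-process-description} is the natural one, and the Riccati computation, once the intensity measure is pinned down, is routine. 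A final consistency check: setting $\ell\to$ its mean or integrating the conditional formula against the Gamma$(3/2)$ law of $Z_r$ must reproduce the unconditional Laplace transform, and letting $\mu\to0$ must return $1$; both give quick sanity tests on the constants.
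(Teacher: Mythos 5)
Your route --- deriving Theorem \ref{laws-hull} from Theorem \ref{hull-process-description} --- is the reverse of the paper's. The paper proves Theorem \ref{laws-hull} first and directly from the spine decomposition of $\t_\infty$: the volume splits into an independent sum of the contribution of subtrees grafted below $L_r$ (identified via Theorem \ref{decotheo} as $\sigma$ under $\N_r(\cdot\mid W_*=0)$ and evaluated in Proposition \ref{timebelow0}) and a Poisson functional given $Z_r$ handled by the special Markov property and the explicit solutions $u_{\lambda,\mu}$ of $\frac12 u''=2u^2-\mu$. Your direction is not circular in principle, but it founders on a concrete error: the Laplace transform of the density $\frac{1}{\sqrt{2\pi x^5}}\,e^{-1/2x}$ is \emph{not} $e^{-\sqrt{2\lambda}}$. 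That is the transform of the $1/2$-stable density $\frac{1}{\sqrt{2\pi x^3}}\,e^{-1/2x}$; the density of $\xi$ carries an extra factor $x^{-1}$, and one finds $E[e^{-\lambda\xi}]=(1+\sqrt{2\lambda})\,e^{-\sqrt{2\lambda}}$ (equivalently $E[\xi e^{-\lambda\xi}]=e^{-\sqrt{2\lambda}}$). Consequently the conditional Laplace transform given the path of $Z$ is $\prod_{s_i\le r}(1+\alpha|\Delta Z_{s_i}|)e^{-\alpha|\Delta Z_{s_i}|}$ with $\alpha=\sqrt{2\mu}$, not $\exp(-\alpha\sum_{s_i\le r}|\Delta Z_{s_i}|)$, and your reduction to the Laplace transform of the sum of jump sizes collapses.

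Even after correcting this, what remains is essentially the content of the paper's Proposition \ref{jointfixedtime}: one must evaluate $E\big[e^{-\lambda X_a}\prod_{s\le a}(1+\alpha\Delta X_s)e^{-\alpha\Delta X_s}\big]$ for the forward CSBP --- via the compensator $X_u\,\rd u\,\kappa(\rd x)$ of the jump measure, an integration by parts, and the flow identity that matches the answer with $u_{\lambda,\mu}(a)$ --- and then transfer it through the time reversal and the $h$-transform conditioning on extinction at time $r$ in order to extract the prefactor $r^3(2\mu)^{3/4}\cosh/\sinh^3$ and the conditional version given $Z_r=\ell$. You explicitly defer exactly these steps, and they are the substance of the proof. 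Two further cautions: your phrase that, conditionally on the path, the jumps form a Poisson point process is not literally meaningful (the jumps are determined by the path); the correct tool is the dual predictable projection of the jump measure, which is $X_u\,\rd u\,\kappa(\rd x)$. And the paper's proof of Theorem \ref{hull-process-description} itself reuses the geometric decomposition arguments underlying the proof of Theorem \ref{laws-hull}, so your rearrangement does not actually bypass that work.
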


In view of the first assertion of the theorem, one may ask whether a similar formula 
holds for the volume $|B_r(\pp_\infty)|$ of the ball of radius $r$. In principle our
methods should also be applicable to this problem, but our calculations did not lead to
a tractable expression. One may still compare the expected volumes of the hull and the
ball. From the first formula of the theorem, one easily gets that $E[|B^\bullet_r(\pp_\infty)|]=r^4/3$.
On the other hand, using the method of the proof of \cite[Proposition 5]{LGM}, one can verify that
$E[|B_r(\pp_\infty)|]=2r^4/21$.

We also note that 
there is an interesting analogy between the second formula of Theorem \ref{laws-hull} and
classical formulas for Bessel processes (see Corollary 1.8 and Corollary 3.3 in
\cite[Chapter XI]{RY}), which also involve hyperbolic functions -- in special cases
these formulas can be restated in terms of linear Brownian motion via the Ray-Knight theorems.

The preceding results can also be interpreted in terms of asymptotics for the UIPQ.
In the last section of this article, we prove that the process of hull volumes of the UIPQ
converges in distribution, modulo a suitable rescaling, to the process $(|B^\bullet_r(\pp_\infty)|)_{r>0}$.
A similar invariance principle should hold for the UIPT and for more general random lattices such as
the ones constructed by Addario-Berry \cite{Add} and Stephenson \cite{Ste}. 

Our proofs depend on a new representation of the Brownian plane, which is different from
the one used in \cite{CLG}. Roughly speaking, this representation is a continuous analog
of the construction of the UIPQ that was given by Chassaing and Durhuus in \cite{CD},
whereas \cite{CLG} used a continuous version of the construction in \cite{CMM}.  
Similarly as in \cite{CLG}, the representation of the Brownian plane in the present work uses a random infinite 
real tree $\t_\infty$ whose vertices are assigned real labels. The probabilistic structure of the real tree $\t_\infty$
is more complicated than in \cite{CLG}, but the labels are now nonnegative and correspond to distances
from the root in $\pp_\infty$ (whereas in \cite{CLG} labels corresponded in some sense to ``distances
from infinity''). This is of course similar to the well-known Schaeffer bijection between rooted
quadrangulations and well-labeled trees \cite{CS}. The fact that labels are distances from the root 
is important for our purposes, since it allows us to give a simple representation of the hull 
of radius $r$: The complement of this hull corresponds to the set of all points $a$ in
$\t_\infty$ such that labels stay greater than $r$ along the (tree) geodesic from $a$ to infinity. 
See formula \eqref{formula-hull} below.
There is a similar interpretation for the boundary of the hull, and a key observation is the fact that the ``boundary length'' $Z_r$ can be obtained in terms
of exit measures from $(r,\infty)$ associated with the ``subtrees'' branching off the spine of the infinite tree $\t_\infty$
at a level greater than the last occurence of label $r$ on the spine (see formula \eqref{size-boundary} below). 

The construction of the infinite tree $\t_\infty$ and of the labels assigned to
its vertices, as well as the subsequent calculations, make
a heavy use of the Brownian snake and its properties. In particular the special Markov
property of the Brownian snake \cite{LG0} and its connections with partial differential 
equations play an important role. Because of the close relation between super-Brownian 
motion and the Brownian snake, some of the results that follow can be written as statements
about super-Brownian motion, which may be of independent interest. In particular, Corollary 
\ref{supercoro}, which is essentially equivalent to the second formula of  Theorem \ref{laws-hull}, gives the 
Laplace transform of the total integrated mass of a super-Brownian motion started from $u\delta_a$
(for some $u,a>0$)
knowing that the minimum of the range is equal to $0$. Similarly, Corollary
\ref{lawprocess} determines for a super-Brownian motion starting from $\delta_0$ the law of the process 
whose value at time $r>0$ is the pair consisting of the exit measure from $(-r,\infty)$ and the mass of
those historical paths that do not hit level $-r$. 

The paper is organized as follows. Section 2 presents a number of preliminaries. In particular,
we recall basic facts about the (one-dimensional) Brownian snake including  
exit measures and the special Markov property, and its connections
with super-Brownian motion. We also state a recent result from \cite{Bessel} giving a decomposition
of the Brownian snake knowing its minimal spatial position. The latter result is especially useful in Section 3, 
where we derive our new representation of the 
Brownian plane. In order to show that this new construction is equivalent to the one in \cite{CLG}, we use the
fact that the distribution of the  Brownian plane is characterized by the invariance under scaling and the above-mentioned
property stating that the Brownian plane is locally isometric to the Brownian map. Section 4 contains the proof
of our main results: Propositions \ref{approx-exit} and \ref{process-exit} are proved in subsection \ref{exit-process},
Theorem \ref{laws-hull} is derived in subsection \ref{volu-hull}, and Theorem \ref{hull-process-description}
is proved in subsection \ref{pro-hull}.
 Finally, Section 5 is devoted to our invariance principle relating the hull process 
of the UIPQ to the process $(|B^\bullet_r(\pp_\infty)|)_{r>0}$.

\section{Preliminaries}

\subsection{A continuous-state branching process}
\label{CSBPpreli}

An important role in this work will be played by 
a particular continuous-state branching process, which was
already mentioned in the introduction. We refer to \cite[Chapter 2]{LGZ}
and references therein for the general theory of continuous-state branching processes, and
content ourselves with a brief exposition of the case of interest in this work.
We fix a constant $c>0$. The 
continuous-state branching process with branching mechanism $\psi(u)=c\,u^{3/2}$ is
the Feller Markov process $(X_t)_{t\geq 0}$ with values in $\R_+$, c\`adl\`ag paths and no
negative jumps, whose semigroup is characterized as
follows. If $P_x$ stands for the probability measure under which $X$ starts from $X_0=x$, then, for
every $x,t\geq 0$ and every $\lambda> 0$,
$$E_x[e^{-\lambda X_t}]= e^{-x\,u_t(\lambda)}$$
where the function $u_t(\lambda)$ is determined by the differential equation
$$\frac{\rd u_t(\lambda)}{\rd t}= - c(u_t(\lambda))^{3/2}\;,\quad u_0(\lambda)=\lambda.$$
It follows that
$u_t(\lambda)=(\lambda^{-1/2} + \frac{c}{2}t)^{-2}$,
and thus,
\begin{equation}
\label{LaplaceCSBP}
E_x[e^{-\lambda X_t}]=\exp\Big(-x\Big(\lambda^{-1/2} + \frac{c}{2}t\Big)^{-2}\Big).
\end{equation}
By differentiating with respect to $\lambda$, we have also
\begin{equation}
\label{CSBPtech}
E_x[X_te^{-\lambda X_t}] = x\lambda^{-3/2}\,\Big(\lambda^{-1/2}+\frac{c}{2}t\Big)^{-3}\,\exp\Big(-x\Big(\lambda^{-1/2} + \frac{c}{2}t\Big)^{-2}\Big).
\end{equation}
Let $T:=\inf\{t\geq 0:X_t=0\}$, and note that $X_t=0$ for every $t\geq T$, a.s. Since $P_x(T\leq t)=P_x(X_t=0)=\exp(-\frac{4x}{c^2t^2})$, 
we readily obtain that the density of $T$ under $P_x$ is (when $x>0$) the function 
$$t\mapsto \phi_t(x):=\frac{8x}{c^2t^3}\,\exp\Big(-\frac{4x}{c^2t^2}\Big).$$

For future purposes, it will be useful to introduce the process $X$ conditioned on 
extinction at a fixed time.
To this end, we write $q_t(x,\rd y)$ for the transition kernels of $X$. We fix $\rho>0$ and define the process $X$ 
``conditioned on extinction at time $\rho$'' as the time-inhomogeneous Markov process indexed by the interval $[0,\rho]$
with values in $(0,\infty)$ (with $0$ serving as a cemetery point) whose transition 
kernel between times $s$ and $t$ is
$$\pi_{s,t}(x,\rd y)= \frac{\phi_{\rho-t}(y)}{\phi_{\rho-s}(x)}\,q_{t-s}(x,\rd y),$$
if $0\leq s<t<\rho$ and $x>0$, and
$$\pi_{s,\rho}(x,\rd y)=\delta_0(\rd y)$$
if $s\in[0,\rho)$ and $x>0$. This is just a standard $h$-transform in a time-inhomogeneous setting, and the interpretation
can be justified by the fact that, for every choice of $0<s_1<\cdots<s_p<\rho$, the conditional distribution 
of $(X_{s_1},\ldots,X_{s_p})$ under $P_x(\cdot\mid \rho\leq T<\rho+\ve)$ converges to $\pi_{0,s_1}(x,\rd y_1)\pi_{s_1,s_2}(y_1,\rd y_2)\ldots
\pi_{s_{p-1},s_p}(y_{p-1},\rd y_p)$ as $\ve\downarrow 0$. 

If $0\leq s<t<\rho$, the Laplace transform of $\pi_{s,t}(x,\rd y)$ is
\begin{align}
\label{CSBPtech2}
\int e^{-\lambda y}\,\pi_{s,t}(x,\rd y)&=  \frac{1}{\phi_{\rho-s}(x)}\,E_x[\phi_{\rho-t}(X_{t-s})\,e^{-\lambda X_{t-s}}]\nonumber\\
&=\Bigg(\frac{\rho-s}{\rho-t+ (t-s)(1+\frac{c^2}{4}\lambda(\rho-t)^2)^{1/2}}\Bigg)^3\nonumber\\
&\ \times \exp\Bigg(-\frac{4x}{c^2}\Big(\Big((\frac{c^2\lambda}{4} +(\rho-t)^{-2})^{-1/2} +t-s\Big)^{-2}- (\rho-s)^{-2}\Big)\Bigg),
\end{align}
where the second equality follows from the explicit expression of $\phi_{\rho-s}$ and formula \eqref{CSBPtech}.

Finally, let us briefly discuss the process $\wt X$ which was introduced in Section 1. Simple arguments give the existence of a 
process $(\wt X_t)_{t\in(-\infty,0]}$ with c\`adl\`ag paths and no negative jumps, which is
indexed by the time interval $(-\infty,0]$ and such that:
\begin{enumerate}
\item[$\bullet$] $\wt X_t>0$ for every $t<0$, and $\wt X_0=0$, a.s.;
\item[$\bullet$] $\wt X_t\la +\infty$ as $t\downarrow -\infty$, a.s.;
\item[$\bullet$] for every $x>0$, if $\wt T_x:=\inf\{t\in(-\infty,0]: \wt X_t\leq x\}$, the process
$(\wt X_{(\tilde T_x+t)\wedge 0})_{t\geq 0}$ has the same distribution as $X$ started from $x$. 
\end{enumerate}
To get an explicit construction of $\wt X$, one may concatenate independent copies of the process $X$ started
at $n$ and stopped at the hitting time of $n-1$, for every integer $n\geq 1$. We omit the details.

\subsection{Preliminaries about the Brownian snake}
\label{prelisnake}

We give below a brief presentation of the Brownian snake, referring to the book 
\cite{LGZ} for more details. We write $\mathcal{W}$ for the set of all finite paths in $\R$. An element of $\mathcal{W}$ is
a continuous mapping $\w:[0,\zeta]\longrightarrow \R$, where $\zeta=\zeta_{(\w)}\geq 0$ depends
on $\w$ and is called the lifetime of $\w$. We write $\wh \w=\w(\zeta_{(\w)})$ for the
endpoint of $\w$. For $x\in\R$, we set $\mathcal{W}_x:=\{\w\in \mathcal{W}:\w(0)=x\}$. 
The trivial path $\w$ such that $\w(0)=x$ and $\zeta_{(\w)}=0$
is identified with the point $x$ of $\R$, so that we can view $\R$ as a subset of $\mathcal{W}$.
The space $\mathcal{W}$ is equipped with the distance
$$d(\w,\w')=|\zeta_{(\w)}-\zeta_{(\w')}| + \sup_{t\geq 0} |(\w(t\wedge \zeta_{(\w)})-\w'(t\wedge \zeta_{(\w')})|.$$

The Brownian snake $(W_s)_{s\geq 0}$ is a continuous Markov process with values in $\mathcal{W}$. We will write 
$\zeta_s=\zeta_{(W_s)}$ for the lifetime process of $W_s$. The process $(\zeta_s)_{s\geq 0}$ evolves like a
reflecting Brownian motion in $\R_+$. Conditionally on $(\zeta_s)_{s\geq 0}$, the evolution of $(W_s)_{s\geq 0}$
can be described informally as follows: When $\zeta_s$ decreases, the path $W_s$ is shortened from its tip,
and when $\zeta_s$ increases the path $W_s$ is extended by adding ``little pieces of linear Brownian motion''
at its tip. We refer to \cite[Chapter IV]{LGZ} for a more rigorous presentation.

It is convenient to assume that the Brownian snake is defined on the canonical space $C(\R_+,\mathcal{W})$ of all continuous functions from $\R_+$ into $C(\R_+,\mathcal{W})$,
in such a way that, for $\omega=(\omega_s)_{s\geq 0}\in C(\R_+,\mathcal{W})$, we have $W_s(\omega)=\omega_s$. 
The notation $\P_\w$ then stands for the law of the Brownian snake started from $\w$. 

For every $x\in\R$, the trivial path $x$ is a regular recurrent point for the Brownian snake, and so we can make 
sense of the excursion measure $\N_x$ away from $x$, which is a $\sigma$-finite measure on $C(\R_+,\mathcal{W})$.
Under $\N_x$, the process $(\zeta_s)_{s\geq 0}$ is distributed according to the It\^o measure of positive excursions
of linear Brownian motion, which is normalized so that, for every $\ve>0$, 
$$\N_x\Big(\sup_{s\geq 0} \zeta_s >\ve\Big) =\frac{1}{2\ve}.$$
We write $\sigma:=\sup\{s\geq 0: \zeta_s>0\}$ for the duration of the excursion under $\N_x$. 
For every $\ell>0$, we will also use the notation $\N_0^{(\ell)}:=\N_0(\cdot\mid \sigma=\ell)$. 

We set
$$\r:=\{\wh W_s:s\geq 0\}\;,\ W_*:=\inf \r=\inf_{s\geq 0} \wh W_s.$$
We will consider $\r$ and $W_*$ under the excursion measures $\N_x$, and we note that
we have also $\r=\{\wh W_s:0\leq s\leq \sigma\}$ and $W_*=\min\{\wh W_s:0\leq s\leq \sigma\}$, $\N_x$ a.e.
Occasionally we also write $\omega_*=W_*(\omega)$ for $\omega\in C(\R_+,\mathcal{W})$.

If $x,y\in\R$ and $y<x$, we have
\begin{equation}
\label{hittingzero}
\N_x(y\in\r)=\N_x(W_*\leq y)= \frac{3}{2(x-y)^2}
\end{equation}
(see e.g. \cite[Section VI.1]{LGZ}). 

It is known (see e.g.~\cite[Proposition 2.5]{LGW}) that $\N_x$ a.e. there is a unique instant $s_{\bm}\in[0,\sigma]$ such that
$\wh W_{s_{\bm}}=W_*$. 

\medskip
\noindent{\bf Decomposing the Brownian snake at its minimum.} We will now recall a
key result of \cite{Bessel} that plays an important role in what follows. This result identifies the 
law of the minimizing path $W_{s_\bm}$ under $\N_0$, together with the distribution of the
``subtrees'' that branch off the minimizing path. Let us define these subtrees in a more
precise way.

For every $s\geq 0$, we set
$$\hat \zeta_s:=\zeta_{(s_{\bm}+s)\wedge \sigma}\;,\quad\check \zeta_s:=\zeta_{(s_{\bm}-s)\vee0}\;.$$
We let $(\hat a_i,\hat b_i)$, $i\in \hat I$ be the excursion intervals of $\hat \zeta_s$ above its past minimum. Equivalently,
the intervals $(\hat a_i,\hat b_i)$, $i\in \hat I$ are the connected components of 
the set 
$$\Big\{s\geq 0: \hat \zeta_s > \min_{0\leq r\leq s}\hat\zeta_r\Big\}.$$
Similarly, we let $(\check a_j,\check b_j)$, $j\in \check I$ be the excursion intervals of $\check\zeta_s$ above its past minimimum.
We may assume that the indexing sets $\hat I$ and $\check I$ are disjoint. In terms of the tree $\t_\zeta$ coded by
the excursion $(\zeta_s)_{0\leq s\leq \sigma}$ under $\N_0$ (see e.g. \cite[Section 2]{LGtree}),
each interval $(\hat a_i,\hat b_i)$ or $(\check a_j,\check b_j)$ corresponds to a subtree of $\t_\zeta$ branching off
the ancestral line of the vertex associated with $s_{\bm}$.
We next consider the spatial displacements corresponding to these subtrees. 
The properties of the Brownian snake imply that, for every $i\in I$, the paths $W_{s_\bm+s}$, $s\in [\hat a_i,\hat b_i]$,
are the same up to time $\zeta_{s_\bm+\hat a_i}=\zeta_{s_\bm+\hat b_i}$, and similarly for the paths
$W_{s_\bm-s}$, $s\in[\check a_j,\check b_j]$, for every $j\in J$. 
Then, for every $i\in \hat I$, we let $W^{[i]}\in C(\R_+,\mathcal{W})$ be defined by
$$W^{[i]}_s(t) = W_{s_{\bm}+(\hat a_i+s)\wedge\hat b_i} (\zeta_{s_{\bm}+\hat a_i} +t)\ , \quad 0\leq t\leq \zeta_{s_{\bm}+(\hat a_i+s)\wedge\hat b_i} -\zeta_{s_{\bm}+\hat a_i}.$$
Similarly, for every $j\in \check I$,
$$W^{[j]}_s(t) = W_{s_{\bm}-(\check a_j+s)\wedge \check b_j} (\zeta_{s_{\bm}-\check a_j} +t)\ , \quad 0\leq t\leq \zeta_{s_{\bm}-(\check a_j+s)\wedge\check b_j} -\zeta_{s_{\bm}-\check a_j}.$$
We finally introduce the point measures on $\R_+\times C(\R_+,\mathcal{W})$ defined by
$$\hat{\mathcal N} = \sum_{i\in \hat I} \delta_{(\zeta_{s_{\bm}+\hat a_i}, W^{[i]})}\ ,\quad \check{\mathcal N} = \sum_{j\in \check I} \delta_{(\zeta_{s_{\bm}-\hat a_j}, W^{[j]})}.$$
%If $\omega=(\omega_s)_{s\geq 0}$ belongs to $C(\R_+,\mathcal{W})$, we set $\omega_*:=\inf\{\wh \omega_s:s\geq 0\}$.

\begin{theorem}
\label{decotheo}
{\rm (i)}
Let $a>0$. Under the excursion measure $\N_0$ and
conditionally on $W_*=-a$, the random path $(a+W_{s_{\bm}}(\zeta_{s_{\bm}}-t))_{0\leq t\leq \zeta_{s_{\bm}}}$ is distributed as
a nine-dimensional Bessel process started from $0$ and stopped at its last passage time at level $a$.

{\rm (ii)} Under $\N_0$, conditionally on the minimizing path $W_{s_{\bm}}$, the point measures $\hat{\mathcal N}(\rd t,\rd\omega)$ and $ \check{\mathcal N}(\rd t,\rd\omega)$
are independent and their common conditional distribution is that of a Poisson point measure with intensity
$$2\,\mathbf{1}_{[0,\zeta_{s_{\bm}}]}(t)\,\mathbf{1}_{\{\omega_*> \wh W_{s_\bm}\}}\,\rd t\,\N_{W_{s_{\bm}}(t)}(\rd \omega).$$
\end{theorem}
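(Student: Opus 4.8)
The plan is to derive Theorem~\ref{decotheo} from a spine decomposition of the Brownian snake, obtained by combining the classical Bismut-type decomposition at a ``uniformly chosen'' vertex with a Palm-type procedure that singles out the (a.e.\ unique) minimizing vertex $s_{\bm}$. Recall the Bismut decomposition, which describes the law of the Brownian snake under the measure $\N_0(\rd\omega)\,\mathbf{1}_{[0,\sigma]}(s)\,\rd s$: the lifetime $\zeta_s$ is distributed according to Lebesgue measure on $\R_+$; conditionally on $\zeta_s=\zeta$, the path $W_s$ is a linear Brownian motion started at $0$ and run on $[0,\zeta]$; and, conditionally on $W_s$, the point measures recording the subtrees branching off the ancestral line of $s$ to its left and to its right are two independent Poisson point measures on $[0,\zeta]\times C(\R_+,\mathcal W)$, each with intensity $2\,\mathbf{1}_{[0,\zeta]}(t)\,\rd t\,\N_{W_s(t)}(\rd\omega)$. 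I would use this, together with the hitting probability \eqref{hittingzero}, as the basic input.

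The next step is the elementary remark that a vertex $s$ coincides with $s_{\bm}$ if and only if (i) the running minimum of $W_s$ is attained only at its tip, i.e.\ $W_s(t)>\wh W_s$ for every $t<\zeta_s$, and (ii) every subtree branching off the ancestral line of $s$ has minimal spatial position strictly above $\wh W_s$; indeed, (i) and (ii) force $\wh W_s=W_*$, hence $s=s_{\bm}$ by the a.e.\ uniqueness of the minimizing vertex. One then incorporates these two constraints into the Bismut decomposition. Constraint (ii) is handled path by path: conditionally on $W_s$, the probability that a given side creates no subtree $\omega$ with $\omega_*\le\wh W_s$ equals, by \eqref{hittingzero}, $\exp\big(-2\int_0^{\zeta_s}\rd t\,\frac{3}{2(W_s(t)-\wh W_s)^2}\big)=\exp\big(-3\int_0^{\zeta_s}\rd t\,(W_s(t)-\wh W_s)^{-2}\big)$, so that the two sides together contribute the multiplicative weight $\exp\big(-6\int_0^{\zeta_s}\rd t\,(W_s(t)-\wh W_s)^{-2}\big)$ on the spine path, while after this reweighting the subtree point measures become Poisson with intensity $2\,\mathbf{1}_{[0,\zeta_s]}(t)\,\mathbf{1}_{\{\omega_*>\wh W_s\}}\,\rd t\,\N_{W_s(t)}(\rd\omega)$, which, once $s$ is identified with $s_{\bm}$, is precisely the intensity in part~(ii) of the theorem. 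Constraint (i) reweights the Brownian spine path so that it attains its minimum only at its endpoint; time-reversing and re-centering the minimizing path, i.e.\ passing to $R_t:=a+W_{s_{\bm}}(\zeta_{s_{\bm}}-t)$ on the event $\{W_*=-a\}$, one recognizes the law of $R$ as that of a nine-dimensional Bessel process started at $0$, using the absolute-continuity relation between a nine-dimensional Bessel process and linear Brownian motion: its Radon--Nikodym density carries the exponential factor $\exp\big(-\frac{(d-1)(d-3)}{8}\int\rd t\,R_t^{-2}\big)$ with $d=9$, i.e.\ $\exp(-6\int\rd t\,R_t^{-2})$, which matches the exponent produced by constraint~(ii), together with a power of the endpoint which, combined with the normalization inherent in the passage ``uniform vertex $\to$ minimizing vertex'' and with the conditioning $\{W_*=-a\}$, forces $R$ to be stopped at its last passage time at level $a$. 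Reading off the conditional law of the subtrees simultaneously yields part~(ii).

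The hard part will be making rigorous the selection of the single minimizing vertex, since $s_{\bm}$ carries no mass under $\N_0(\rd\omega)\,\rd s$ and $\N_0$ is an infinite measure. The plan is to work under the finite measure $\N_0(\cdot\cap\{W_*\le -a\})$, of total mass $\frac{3}{2a^2}$, to approximate $s_{\bm}$ by the set of vertices $s$ with $\wh W_s<W_*+\ve$, and to let $\ve\downarrow 0$ after normalizing; the re-rooting invariance of the Brownian snake (the snake re-rooted at a uniform vertex and re-centered at that vertex has again, up to the $\sigma$-bias, the law of the snake) can be used to put the resulting ratios in a tractable form, and one then has to check both that the contribution concentrates on $s_{\bm}$ and that the limiting normalizing constant indeed produces the ``last passage at level $a$'' truncation. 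An alternative route, which handles the Poisson structure of the subtrees more transparently, is to iterate the special Markov property of the Brownian snake over a sequence of levels decreasing to $W_*$ and to pass to the limit; this constructs the ancestral line of $s_{\bm}$ together with its hanging subtrees directly, but the identification of the reversed spine as a nine-dimensional Bessel process still relies on the Girsanov computation above. In either approach the main technical burden lies in the bookkeeping around the infinite measure $\N_0$ and the measure-zero conditioning, rather than in the structural identification itself.
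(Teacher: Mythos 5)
You should first be aware that the paper does not prove Theorem \ref{decotheo} at all: parts (i) and (ii) are imported verbatim as Theorems 5 and 6 of the reference \cite{Bessel}, the only added remark being that the time-reversal of a Bessel process of dimension $-5$ started from $a$ and killed at $0$ is a nine-dimensional Bessel process started from $0$ and killed at its last passage time at level $a$ (\cite[Exercise XI.1.23]{RY}). So there is no in-paper argument to compare yours with, and your task is really to reconstruct the proof of the external result. Your structural inputs are the right ones: the Bismut spine decomposition of $\N_0(\rd\omega)\,\mathbf{1}_{[0,\sigma]}(s)\,\rd s$ with two independent side point measures of intensity $2\,\mathbf{1}_{[0,\zeta_s]}(t)\,\rd t\,\N_{W_s(t)}(\rd\omega)$, the a.e.\ characterization of $s_{\bm}$ by your constraints (i)--(ii), and the numerology checks out: by \eqref{hittingzero} the two-sided void weight is $\exp(-6\int_0^{\zeta_s}(W_s(t)-\wh W_s)^{-2}\rd t)$, which is exactly the exponent $(d-1)(d-3)/8=6$ common to dimensions $9$ and $-5$ in the Bessel--Brownian absolute continuity relation (the same relation the paper invokes, as Lemma 1 of \cite{Bessel}, in the proof of Lemma \ref{Laplace2}).

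The genuine gap is that your middle paragraph, which carries the substance of the identification, is a $0\times 0/0$ computation and cannot be executed "constraint by constraint" as written. For a Brownian spine path, $W_s(t)-\wh W_s$ is of order $\sqrt{\zeta_s-t}$ near the tip, so $\int_0^{\zeta_s}(W_s(t)-\wh W_s)^{-2}\,\rd t=+\infty$ a.s.\ and the "multiplicative weight" from constraint (ii) is identically zero; the event in constraint (i) has probability zero; and the Palm normalization (the mass that $\rd s$ assigns to $\{s=s_{\bm}\}$) is zero. Moreover the set of subtrees removed by the indicator $\mathbf{1}_{\{\omega_*>\wh W_{s_{\bm}}\}}$ has infinite intensity near the tip even in the limit (since $\int_0^\cdot R_t^{-2}\,\rd t=\infty$ for a Bessel(9) process at its starting point), so the conditional Poisson law in (ii) is not absolutely continuous with respect to the unconstrained one and cannot be obtained by a literal reweighting. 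All three degeneracies must be resolved simultaneously: one has to keep the cutoff throughout --- e.g.\ replace $\{s=s_{\bm}\}$ by $\{\wh W_s<W_*+\ve\}$, which turns the weight into $\exp(-6\int_0^{\zeta_s}(W_s(t)-\wh W_s+\ve)^{-2}\rd t)$ and the positivity constraint into $W_s(t)>\wh W_s-\ve$ --- compute the regularized Bismut formula exactly, divide by the regularized normalization, and only then let $\ve\downarrow0$, checking concentration on $s_{\bm}$ and convergence of the conditional law of the truncated point measures. That limit, including the emergence of the last-passage truncation from the Lebesgue-distributed lifetime and the endpoint condition, is the entire proof; as it stands your write-up names the destination correctly but defers the whole journey to the closing paragraph.
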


We refer to \cite[Chapter XI]{RY} for basic facts about Bessel processes.
Parts (i) and (ii) of the theorem correspond respectively to Theorem 5 and Theorem 6 of \cite{Bessel}. 
Note that when applying Theorem 5 of \cite{Bessel}, we also use the fact that the time-reversal 
of a Bessel process of dimension $-5$ started from $a$ and stopped when hitting $0$ is a 
nine-dimensional Bessel process started from $0$ and stopped at its last passage time at level $a$
(see e.g. \cite[Exercise XI.1.23]{RY}).

\medskip
\noindent{\bf Exit measures and the special Markov property.} Let $D$ be an open interval of $\R$, such that $D\not =\R$. We fix
$x\in D$ and, for every $\w\in \mathcal{W}_x$, set
$$\tau_D(\w)=\inf\{t\in [0,\zeta_{(\w)}]:\w(t)\notin D\},$$ 
with the usual convention $\inf\varnothing=\infty$.
The exit measure $\z^D$ from $D$ (see \cite[Chapter 5]{LGZ}) is a random measure on $\partial D$, 
which is defined under $\N_x$ and is supported on the set of all exit points $W_s(\tau_D(W_s))$  for the paths $W_s$ such that  $\tau_D(W_s)<\infty$ (note that here $\partial D$ has at most two points, but the
preceding discussion remains valid for the $d$-dimensional Brownian snake
and an arbitrary subdomain $D$ of $\R^d$). Note that $\N_x(\z^D\not =0)<\infty$. It is easy to prove, for instance
by using Proposition \ref{SMP} below, that
\begin{equation}
\label{range-exit}
\{\z^D=0\}= \{\mathcal{R}\subset D\},\quad\N_x\ \hbox{a.e.}
\end{equation}

A crucial ingredient of our study is the special Markov property of the Brownian snake
\cite{LG0}.
In order to state this property, we first observe that, $\N_x$-a.e., the set
$$\{s\geq 0: \tau_D(W_s)<\zeta_s\}$$
is open and thus can be written as a union of disjoint open intervals
$(a_i,b_i)$, $i\in I$, where $I$ may be empty. From the properties of the Brownian 
snake, one has, $\N_x$-a.e. for every $i\in I$ and every
$s\in[a_i,b_i]$, 
$$\tau_D(W_s)=\tau_D(W_{a_i})=\zeta_{a_i},$$
and more precisely all paths $W_s$, $s\in[a_i,b_i]$ coincide up to
their exit time from $D$. For every $i\in I$, we then define an element
$W^{(i)}$ of $C(\R_+,\mathcal{W})$ by setting, for every $s\geq 0$,
$$W^{(i)}_s(t) := W_{(a_i+s)\wedge b_i}(\zeta_{a_i}+t),\quad \hbox{for }
0\leq t\leq \zeta_{(W^{(i)}_s)}:=\zeta_{(a_i+s)\wedge b_i}-\zeta_{a_i}.$$
Informally, the $W^{(i)}$'s represent the ``excursions'' of the Brownian snake outside $D$
(the word ``outside'' is a little misleading here, because although these excursions start from a point of $\partial D$, they will 
typically come back inside $D$).

We also need to introduce a $\sigma$-field that contains the information about the 
paths $W_s$ before they exit $D$. To this end, we set, for every $s\geq 0$,
$$\eta^D_s:=\inf\{r\geq 0: \int_0^r \mathrm{d}u\,\mathbf{1}_{\{\zeta_u \leq \tau_D(W_u)\}} > s\},$$
and we let $\mathcal{E}^D$ be the $\sigma$-field generated by the
process $(W_{\eta^D_s})_{s\geq 0}$ and the class of all sets 
that are $\N_x$-negligible. The random measure $\z^D$ is measurable with respect to $\mathcal{E}^D$ (see 
\cite[Proposition 2.3]{LG0}).

We now state the special Markov property \cite[Theorem 2.4]{LG0}.

\begin{proposition}
\label{SMP}
Under $\N_x$, conditionally on $\mathcal{E}^D$, the point measure
$$\sum_{i\in I} \delta_{W^{(i)}}$$
is Poisson with intensity 
$$\int \z^D(\mathrm{d}y)\,\N_y.$$
\end{proposition}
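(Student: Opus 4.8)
The plan is to prove the equivalent Laplace-functional identity: for every bounded nonnegative measurable function $F$ on $C(\R_+,\mathcal{W})$,
$$\N_x\Big[\exp\Big(-\sum_{i\in I}F(W^{(i)})\Big)\,\Big|\,\mathcal{E}^D\Big]=\exp\Big(-\int\z^D(\rd y)\,\N_y\big(1-e^{-F}\big)\Big).$$
By the exponential formula for Poisson point measures this is equivalent to the statement of the proposition, and since $\z^D$, hence the whole right-hand side, is $\mathcal{E}^D$-measurable, only the identity of conditional expectations is at stake. Conceptually, this is a ``stopping-boundary'' version of the branching property of the Brownian snake: just as the excursions of the lifetime process above a fixed level form, conditionally on the local time accumulated at that level, a Poisson cloud of independent snake excursions started from the corresponding spatial positions, the excursions of the snake ``beyond $\partial D$'' should form, conditionally on the part of the snake that remains in $D$, a Poisson cloud governed by the exit measure $\z^D$, which here plays the role of a local time on $\partial D$.

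First I would truncate. For $\delta>0$ put $I_\delta=\{i\in I:\sup_s\zeta_{(W^{(i)}_s)}>\delta\}$; standard properties of the Brownian snake give that $I_\delta$ is $\N_x$-a.e. finite, and since $\N_y(\sup_s\zeta_s>\delta)=\frac{1}{2\delta}<\infty$ the measure $\int\z^D(\rd y)\,\N_y$, while $\sigma$-finite and of infinite total mass, assigns finite mass to the set of snakes of maximal lifetime exceeding $\delta$. By monotone convergence it therefore suffices to prove the identity with $I$ replaced by $I_\delta$ and with $F$ supported on such snakes, and then to let $\delta\downarrow0$. On the truncated event only finitely many excursions outside $D$ contribute, and they can be peeled off one at a time by combining the strong Markov property of the Brownian snake with excursion theory for the lifetime process: each such excursion $W^{(i)}$ starts from the trivial path $y_i=\wh W_{a_i}\in\partial D$ and, given the past of the snake up to the time it is initiated, is distributed as a snake excursion under $\N_{y_i}$ conditioned on $\{\sup_s\zeta_s>\delta\}$, independently of everything else. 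Since the conditional expectation then factors over $i\in I_\delta$, the claimed identity reduces to the assertion that, conditionally on $\mathcal{E}^D$, the point measure $\sum_{i\in I_\delta}\delta_{W^{(i)}}$ is Poisson with intensity equal to the restriction of $\int\z^D(\rd y)\,\N_y$ to snakes of maximal lifetime exceeding $\delta$.

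The remaining step — and the main obstacle — is precisely the identification of $\z^D$ as the Poisson intensity, i.e. that the rate at which new excursions outside $D$ with exit point in a given Borel set $A\subset\partial D$ are created is governed by $\z^D(A)$. Here I would invoke the construction of the exit measure as a limit of additive functionals of the Brownian snake (see \cite[Chapter 5]{LGZ}): $\z^D$ is, up to normalization, the ``local time at $\partial D$'' of the part of the snake killed at its exit time $\tau_D$, and the Poissonian structure together with the correct intensity then follows by applying excursion theory at this additive functional. Concretely, one computes the conditional first moment $\N_x\big[\sum_{i\in I}g(y_i)\,G(W^{(i)})\,\big|\,\mathcal{E}^D\big]$, checks that it equals $\int\z^D(\rd y)\,g(y)\,\N_y(G)$, then does the same for the higher factorial moments, which pins down the conditional law as the desired Poisson measure, and finally lets $\delta\downarrow0$. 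The delicate points to control are that the approximating additive functionals converge in a mode strong enough to capture the rate of creation of outside-excursions, and that the compensator obtained in the limit is exactly $\z^D$ rather than merely some $\mathcal{E}^D$-measurable measure it dominates — in other words, that no exit-point mass is lost or created in the truncation or in the passage to the limit.
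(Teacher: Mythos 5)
The paper does not actually prove this proposition: it is quoted from \cite{LG0} (Theorem 2.4 there), so there is no internal argument to compare against. Judged on its own terms, your proposal correctly identifies the architecture of the known proof --- reduction to a Laplace-functional identity, truncation to excursions whose maximal lifetime exceeds $\delta$, and identification of the Poisson intensity through first and factorial moment computations tied to the additive-functional construction of $\z^D$ --- but it is an outline rather than a proof: the two steps that carry all the mathematical content are asserted or explicitly deferred.

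First, the claim that each excursion $W^{(i)}$, ``given the past of the snake up to the time it is initiated, is distributed as a snake excursion under $\N_{y_i}$ \ldots\ independently of everything else'' cannot be obtained from the strong Markov property as stated: the initiation times $a_i$ are left endpoints of excursion intervals of $s\mapsto \zeta_s-\tau_D(W_s)$, hence not stopping times, and --- more seriously --- the conditioning $\sigma$-field $\mathcal{E}^D$ is not a ``past'' $\sigma$-field in the time parametrization of the snake. It is generated by the time-changed process $(W_{\eta^D_s})$, which interleaves with the intervals $(a_i,b_i)$, so information in $\mathcal{E}^D$ comes from both before and after each excursion; establishing the independence of the cloud $\{W^{(i)}\}$ from this $\sigma$-field is precisely the content of the special Markov property, and peeling excursions off ``one at a time given the past'' begs the question. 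Second, the identification of the intensity as $\int\z^D(\rd y)\,\N_y$ --- which you correctly single out as the main obstacle --- is left at the level of ``one computes the conditional first moment and checks that it equals $\int\z^D(\rd y)\,g(y)\,\N_y(G)$''; but that computation, carried out via the approximating additive functionals for $\z^D$ together with a discrete-skeleton or time-change argument, is the technical heart of Le Gall's proof and is exactly what would need to be supplied. As written, your text is a faithful road map to the argument of \cite{LG0}, not a self-contained proof.
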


\noindent{\bf Remarks.} (i) Since on the event $\{\z^D=0\}$ there are no excursions outside $D$, the previous 
proposition is equivalent to the same statement where $\N_x$ is replaced by 
the probability measure $\N_x(\cdot\mid \z^D\not =0)$. 

\noindent{(ii)} In what follows we will apply the special Markov property 
in a conditional form. Suppose that $D=(a,\infty)$ for some $a>0$ and that $x>a$. Then
the preceding statement remains valid if we replace $\N_x$ by $\N_x(\cdot\cap\{ \r\subset(0,\infty)\})$,
provided we also replace $\int \z^D(\mathrm{d}y)\,\N_y$ by $\int \z^D(\mathrm{d}y)\,\N_y(\cdot\cap\{\r\subset(0,\infty)\})$. This follows from the fact that conditioning a Poisson point measure 
on having no point on a set of finite intensity is equivalent to removing the points that fall into
this set. We omit the details.

\medskip
For $a<x$, we write $\z_{a}:=\langle \z^{(a,\infty)},1 \rangle$ for the  total mass of the exit measure  outside $(a,\infty)$. We will use the Laplace transform of $\z_a$ under $\N_x$, which is 
given by
\begin{equation}
\label{Laplaceexit}
\N_x\Big(1-\exp(-\mu \z_a)\Big) = \frac{1}{\Big(\mu^{-1/2} + \sqrt{\frac{2}{3}}\,(x - a)\Big)^2},
\end{equation}
for every $\mu\geq 0$. This formula is easily derived from the fact that the (nonnegative) function 
$u(x)=\N_x(1-\exp(-\mu \z_a))$ defined for $x\in(a,\infty)$ solves the differential equation
$u''=4u^2$ with boundary conditions $u(a)=\mu$ and $u(\infty)=0$ (see \cite[Chapter V]{LGZ}). 
On the other hand, an application of the special Markov property shows that, for
every $b<a<x$,
$$\N_x(\exp(-\lambda \z_b)\mid \mathcal{E}^{(a,\infty)}) = \exp\Big(- \z_a\,\N_a(1-\exp(-\lambda \z_b))\Big).$$
If we substitute formula \eqref{Laplaceexit} in the last display, and compare with
\eqref{LaplaceCSBP}, we easily get that the process $(\z_{x-a})_{a> 0}$ is Markov under $\N_x$, with the transition kernels of the continuous-state branching process with branching mechanism 
$\psi(u)=\sqrt{8/3} \,u^{3/2}$. Although $\N_x$ is an infinite measure, the preceding assertion makes sense,
simply because we can restrict our attention to the finite measure event $\{\z_{x-\ve}>0\}$, for
any choice of $\ve>0$.
It follows that $(\z_{x-a})_{a>0}$ has a c\`adl\`ag modification under $\N_x$, which we
consider from now on. 

\medskip
We finally explain an extension of the special Markov property where we consider
excursions outside a random domain. For definiteness, we fix $x=0$, and for
every $a>0$, we set $\mathcal{E}_a=\mathcal{E}^{(-a,\infty)}$. Let $H$ be a 
random variable with values in $(0,\infty]$, such that $\N_0(H<\infty)<\infty$, and assume that $H$ is a stopping time of the filtration $(\mathcal{E}_a)_{a> 0}$
in the sense that, for every $a>0$, the event $\{H\leq a\}$ is $\mathcal{E}_a$-measurable. 
As usual we can  define the $\sigma$-field $\mathcal{E}_H$ that consists of all
events $A$ such that $A\cap\{H\leq a\}$ is $\mathcal{E}_a$-measurable, for every
$a>0$. Since $\z_{-a}$ is $\mathcal{E}_a$-measurable for every $a>0$, it
follows by standard arguments that the random variable $\z_{-H}$ is 
$\mathcal{E}_H$-measurable (at this point it is important that we have taken a
 c\`adl\`ag modification of the process $(\z_{-a})_{a>0}$).
 
We may
consider the excursions $(W^{H,(i)})_{i\in I}$ of the Brownian snake 
outside $(-H,\infty)$. These excursions are defined in exactly the same way as in 
the case where $H$ is deterministic, considering now the connected components
of the open set $\{s\geq 0: W_s(t)<-H\hbox{ for some }t\in[0,\zeta_s]\}$. We define
$\wt W^{H,(i)}$ by shifting $W^{H,(i)}$ so that it starts from $0$.

\begin{proposition}
\label{SMPfort}
Under the probability measure $\N_0(\cdot\mid H<\infty)$, conditionally on the $\sigma$-field 
$\mathcal{E}_{H}$,
the point measure 
$$\sum_{i\in I} \delta_{\wt W^{H,(i)}}$$
is Poisson with intensity 
$$\z_{-H}\,\N_0.$$
\end{proposition}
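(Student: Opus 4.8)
The plan is to reduce the statement about the random level $H$ to the case of a deterministic level, which is handled by Proposition \ref{SMP} (together with Remark (ii) following it), by the standard approximation of a stopping time by a decreasing sequence of discrete-valued stopping times. First I would record the obvious but essential observation: the excursions $W^{H,(i)}$ outside $(-H,\infty)$ depend on $H$ only through the single value of $H$, in the sense that, on the event $\{H=a\}$, the family $(W^{H,(i)})_{i\in I}$ coincides with the family of excursions of the Brownian snake outside the fixed domain $(-a,\infty)$; likewise the shifted excursions $\wt W^{H,(i)}$ and the exit mass $\z_{-H}$ agree on $\{H=a\}$ with their deterministic-level analogues.

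Next I would approximate $H$ from above. For each $n\geq 1$ set $H_n := 2^{-n}\lceil 2^n H\rceil$, which takes values in the countable set $D_n := \{k2^{-n}: k\geq 1\}\cup\{+\infty\}$, satisfies $H_n\downarrow H$ as $n\to\infty$, and is again a stopping time of $(\mathcal{E}_a)_{a>0}$ because $\{H_n\leq a\}=\{H\leq 2^{-n}\lfloor 2^n a\rfloor\}$ is $\mathcal{E}_{2^{-n}\lfloor 2^n a\rfloor}$-measurable, hence $\mathcal{E}_a$-measurable. Fix a finite collection of bounded continuous functionals $F_1,\dots,F_m$ on $C(\R_+,\mathcal{W})$ and a bounded $\mathcal{E}_H$-measurable (hence a fortiori $\mathcal{E}_{H_n}$-measurable) random variable $G$. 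For a fixed level $a\in D_n$ with $a<\infty$, on the event $\{H_n=a\}\in\mathcal{E}_a$ one may apply Proposition \ref{SMP} — in the conditional form of Remark (ii), since we work under $\N_0(\cdot\cap\{H<\infty\})$ and the relevant excursions live outside $(-a,\infty)$ with the constraint coming from the definition of $H$ — to compute the conditional expectation given $\mathcal{E}_a$ of the product $G\prod_{j} F_j(\wt W^{H_n,(i_j)})$; summing the resulting identities over $a\in D_n$, and using that $\{H_n=a\}$ is $\mathcal{E}_a\subset\mathcal{E}_{H_n}$-measurable, yields the Laplace-functional identity
\[
\N_0\Big(G\,\exp\Big(-\sum_{i\in I}\phi(\wt W^{H_n,(i)})\Big)\,\Big|\,H<\infty\Big)
= \N_0\Big(G\,\exp\big(-\z_{-H_n}\,\N_0(1-e^{-\phi})\big)\,\Big|\,H<\infty\Big)
\]
for every bounded nonnegative continuous $\phi$ with appropriate support, i.e. the conclusion of the proposition holds with $H$ replaced by $H_n$.

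Finally I would pass to the limit $n\to\infty$. On the left-hand side, since $H_n\downarrow H$ and the Brownian snake paths are continuous, the excursion intervals outside $(-H_n,\infty)$ converge to those outside $(-H,\infty)$, so $\sum_{i}\phi(\wt W^{H_n,(i)})\to\sum_i\phi(\wt W^{H,(i)})$ pointwise $\N_0(\cdot\mid H<\infty)$-a.e. (this is where a little care is needed — see below); on the right-hand side, $\z_{-H_n}\to\z_{-H}$ a.e. by right-continuity of the càdlàg modification of $(\z_{-a})_{a>0}$, and $\N_0(1-e^{-\phi})$ is a finite constant, so dominated convergence gives the identity with $H$ in place of $H_n$. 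Since $G$ ranges over a separating class of $\mathcal{E}_H$-measurable variables and $\phi$ over a class rich enough to characterize the law of a point measure, this is exactly the assertion that, conditionally on $\mathcal{E}_H$, the point measure $\sum_{i\in I}\delta_{\wt W^{H,(i)}}$ is Poisson with intensity $\z_{-H}\,\N_0$.

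The main obstacle I anticipate is the convergence of the excursion families as $H_n\downarrow H$: one must check that no excursion outside $(-H,\infty)$ is lost or spuriously created in the limit. This is a standard but slightly delicate continuity property; it can be controlled by noting that $\z_{-H}>0$ is a finite-mass event, that the number of excursions $W^{H,(i)}$ reaching below any level $-H-\delta$ is a.s. finite, and that for each such macroscopic excursion the corresponding interval $(a_i,b_i)$ for level $H_n$ converges to the one for level $H$ as $n\to\infty$, while the (infinitely many) tiny excursions contribute negligibly to $\sum_i\phi(\wt W^{H_n,(i)})$ uniformly in $n$ because $\phi$ is bounded with support bounded away from the trivial path and $\N_0(1-e^{-\phi})<\infty$. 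Alternatively, one can sidestep this by testing only against functionals $\phi$ of the form $\phi(\omega)=\mu\,\mathbf 1_{\{\omega_*\le -H-\delta\}}\cdot(\text{something continuous})$ and letting $\delta\downarrow 0$ at the end, which reduces everything to the finitely many macroscopic excursions at each stage.
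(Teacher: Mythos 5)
Your proposal is correct and follows essentially the same route the paper takes: the authors explicitly say the result is obtained by approximating $H$ from above by countably-valued stopping times, applying Proposition \ref{SMP} on each level, and passing to the limit, leaving the details to the reader. Your write-up supplies those details (including the translation-invariance step turning $\z_{-a}\,\N_{-a}$ into $\z_{-a}\,\N_0$ after shifting, and the care needed so that no excursion is lost or created as $H_n\downarrow H$), and your proposed handling of the limit via finitely many macroscopic excursions is a sound way to complete the argument.
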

This proposition can be obtained by arguments very similar
to the derivation of the strong Markov property of Brownian motion 
from the simple Markov property: we approximate $H$ 
with stopping times greater than $H$ that take only countably many values, 
then use Proposition \ref{SMP}
and finally perform a suitable passage to the limit. We leave the
details to the reader. 

\medskip
\noindent{\bf The Brownian snake and super-Brownian motion.} The initial motivation for
studying the Brownian snake came from its connection with super-Brownian motion, which we
briefly recall. Under the excursion measure $\N_x(\rd \omega)$, the lifetime process 
$(\zeta_s(\omega))_{s\geq 0}$ is distributed as a Brownian excursion, and so we 
can define for every $t\geq 0$ the local time proces $(\ell^t_s(\omega))_{s\geq 0}$ 
of this excursion at level $t$. Next let $\mu$ be a finite measure on $\R$, and 
let 
$$\mathcal{N}(\rd \omega)=\sum_{k\in K} \delta_{\omega_{(k)}}(\rd \omega)$$ 
be a Poisson measure on $C(\R_+,\mathcal{W})$ with intensity $\int \mu(\rd x)\,\N_x(\rd \omega)$. 
For every $t>0$, let $\x_t$ be the random measure on $\R$ defined by setting, for every
nonnegative measurable function $\varphi$ on $\R$,
\begin{equation}
\label{SBMsnake}
\langle \x_t,\varphi\rangle =\sum_{k\in K} \int_0^{\sigma(\omega_{(k)})} \rd \ell^t_s(\omega_{(k)})\,\varphi(\wh W_s(\omega_{(k)})).
\end{equation}
If we also set $\x_0=\mu$, the process $(\x_t)_{t\geq0}$ is then a super-Brownian motion with branching mechanism
$\psi_0(u)=2u^2$ started from $\mu$ (see \cite[Theorem IV.4]{LGZ}). A nice feature of this construction is the
fact that it also gives the associated historial process: Just consider for every $t>0$ the
random measure $\mathbf{X}_t$ defined by setting 
\begin{equation}
\label{historicalsnake}
\langle \mathbf{X}_t,\Phi\rangle =\sum_{k\in K} \int_0^{\sigma(\omega_{(k)})} \rd \ell^t_s(\omega_{(k)})\,\Phi(W_s(\omega_{(k)})),
\end{equation}
for every nonnegative measurable function $\Phi$ on $\mathcal{W}$. 
Some of the forthcoming results are stated in terms of super-Brownian motion and its historical process. Without
loss of generality we may and will assume that these processes are obtained by formulas \eqref{SBMsnake} and
\eqref{historicalsnake} of the previous construction. This also means that we consider the special branching 
mechanism $\psi_0(u)=2u^2$, but of course the case of a general quadratic branching mechanism can then  
be handled via scaling arguments.

\section{The Brownian plane}

\subsection{The Brownian plane as a random metric space}

We start by giving a characterization of the Brownian plane as a random pointed metric space
satisfying appropriate properties. We let
$\K_{bcl}$ denote the space of all isometry classes of pointed boundedly
compact length spaces. The space $\K_{bcl}$ is equipped with the local
Gromov-Hausdorff distance $\rd_{LGH}$ (see \cite[Section 2.1]{CLG}) and is a 
Polish space, that is, separable and complete for this distance. For $r>0$
and $F\in \K_{bcl}$, we use the notation $B_r(F)$ for the closed
ball of radius $r$ centered at the distinguished point of $F$. Note that
$B_r(F)$ is always viewed as a {\it pointed} compact metric space.

The Brownian 
plane $\mathcal{P}_\infty$ is then a random variable taking values
in the space $\K_{bcl}$. 

\begin{definition}
\label{local-isometry}
Let $E_1$ and $E_2$ be two random variables with values in $\K_{bcl}$.
We say that $E_1$ and $E_2$ are locally isometric if, for every $\delta>0$, there exists a 
number $r>0$ and a
coupling of $E_1$ and $E_2$ such that the balls $B_r(E_1)$ and $B_r(E_2)$ 
are isometric with probability at least $1-\delta$.
\end{definition}

We leave it to the reader to verify that this is an equivalence relation (only transitivity
is not obvious).
The interest of this definition comes from the next proposition. If
$E$ is a (random) metric space and $\lambda>0$, we use the
notation $\lambda\cdot E$ for the same metric space where 
the distance has been multiplied by $\lambda$. 

\begin{proposition}
\label{caract-BP}
The distribution of the Brownian plane is characterized in the set of 
all probability measures on $\K_{bcl}$ by the following
two properties:
\begin{enumerate}
\item[{\rm (i)}] The Brownian plane is locally isometric to the Brownian map.
\item[{\rm (ii)}] The Brownian plane is scale invariant, meaning that
$\lambda\cdot \mathcal{P}_\infty$ has the same distribution as $\mathcal{P}_\infty$,
for every $\lambda >0$. 
\end{enumerate}
\end{proposition}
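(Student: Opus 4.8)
The plan is to prove Proposition~\ref{caract-BP} by establishing that any probability measure on $\K_{bcl}$ satisfying (i) and (ii) must coincide with the law of $\pp_\infty$, since the Brownian plane itself is known from \cite{CLG} to satisfy both properties. So let $E$ be a random variable with values in $\K_{bcl}$ that is locally isometric to the Brownian map and is scale invariant; I want to show $E \build{=}_{}^{\rm (d)} \pp_\infty$. The first step is to record what local isometry to the Brownian map gives us, in a form that will be stable under scaling. By definition, for each $\delta>0$ there is $r_\delta>0$ and a coupling of $E$ with the Brownian map $\bm$ such that $B_{r_\delta}(E)$ and $B_{r_\delta}(\bm)$ are isometric with probability at least $1-\delta$. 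Since $\pp_\infty$ is also locally isometric to $\bm$ (this is the result recalled from \cite{CLG} in the introduction), and local isometry is transitive (as noted in the excerpt), we get that $E$ is locally isometric to $\pp_\infty$: for each $\delta>0$ there is $s_\delta>0$ and a coupling of $E$ and $\pp_\infty$ with $\P(B_{s_\delta}(E) \cong B_{s_\delta}(\pp_\infty)) \geq 1-\delta$.

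The key idea is then to use scale invariance of \emph{both} $E$ and $\pp_\infty$ to upgrade this ``local'' isometry, valid only on a possibly tiny ball of radius $s_\delta$, to an isometry of balls of \emph{arbitrarily large} fixed radius, at the cost of a small failure probability. Fix $R>0$ and $\delta>0$. Choose $s_\delta$ as above and pick $\lambda = R/s_\delta$. Apply the coupling of $E$ and $\pp_\infty$ on balls of radius $s_\delta$, and then rescale both spaces by $\lambda$. On the event of probability $\geq 1-\delta$ where $B_{s_\delta}(E) \cong B_{s_\delta}(\pp_\infty)$, we get $B_R(\lambda\cdot E) \cong B_R(\lambda\cdot \pp_\infty)$, because multiplying all distances by $\lambda$ turns a radius-$s_\delta$ ball into a radius-$R$ ball and carries an isometry to an isometry. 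By scale invariance, $\lambda\cdot E \build{=}_{}^{\rm (d)} E$ and $\lambda\cdot\pp_\infty \build{=}_{}^{\rm (d)} \pp_\infty$; hence there is a coupling of $E$ and $\pp_\infty$ under which $B_R(E) \cong B_R(\pp_\infty)$ with probability at least $1-\delta$. Since $R$ and $\delta$ were arbitrary, the laws of $B_R(E)$ and $B_R(\pp_\infty)$ are equal for every $R>0$ (two laws that can be coupled to agree with probability arbitrarily close to $1$ are equal).

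The final step is to pass from equality of the laws of all balls $B_R(\cdot)$ to equality of the laws of the full spaces in $\K_{bcl}$. Here one uses that the map $F \mapsto (B_R(F))_{R>0}$ is, in an appropriate sense, a measurable injection: a pointed boundedly compact length space is determined by the increasing family of its closed balls about the distinguished point, and convergence in $\rd_{LGH}$ is characterized by Gromov--Hausdorff convergence of balls of each fixed radius (see \cite[Section 2.1]{CLG}). Concretely, I would argue that for a fixed countable dense set of radii $R_n$, the joint law of $(B_{R_n}(E))_n$ is determined by the law of $E$ and conversely determines it, using a monotone-class or $\pi$-$\lambda$ argument on the Borel $\sigma$-field of $\K_{bcl}$; combined with the previous step (which gives equality of each marginal $B_{R_n}(E) \build{=}_{}^{\rm (d)} B_{R_n}(\pp_\infty)$, and in fact equality of finite-dimensional marginals since the balls are nested and the smaller ones are measurable functions of the larger ones), this yields $E \build{=}_{}^{\rm (d)} \pp_\infty$.

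The main obstacle I anticipate is precisely this last measurability/$\sigma$-field point: one must be careful that the law of a $\K_{bcl}$-valued random variable is genuinely determined by the laws of its balls, i.e.\ that the evaluation maps $F\mapsto B_R(F)$ (with values in the Polish space of isometry classes of pointed compact metric spaces) generate the Borel $\sigma$-field of $\K_{bcl}$. This is essentially built into the definition of $\rd_{LGH}$ but deserves an explicit statement; I would cite the relevant facts from \cite[Section 2.1]{CLG} rather than reprove them. A secondary, more routine point is checking that the ``scale then couple'' argument in step two is legitimate — that rescaling commutes with taking balls in the way claimed and that scale invariance can be invoked simultaneously for the two spaces via a single coupling — but this is straightforward once phrased carefully.
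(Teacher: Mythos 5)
Your argument is correct and follows essentially the same route as the paper: transitivity of local isometry to pass from the Brownian map to the Brownian plane, then the scaling trick to turn a coupling on a small ball into a coupling on balls of arbitrary radius. The only difference is the concluding step, where the paper sidesteps the measurability concerns you flag by observing directly that agreement of large balls makes $\rd_{LGH}(E,\pp_\infty)$ small under the coupling, so the L\'evy--Prokhorov distance between the two laws is zero.
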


\begin{proof} 
The fact that property (i) holds is Theorem 1 in \cite{CLG}. Property (ii) is immediate from
the construction in \cite{CLG}, or directly from the convergence (1) in \cite[Theorem 1]{CLG}.
So we just have to prove that these two properties characterize the distribution of the Brownian 
plane. Let $E$ be a random variable with values in $\K_{bcl}$, which is both 
locally isometric to the Brownian map and scale invariant. Then, $E$ is also
locally isometric to the Brownian plane, and, for every $\delta>0$ we can find $r>0$
and a coupling of $E$ and $\mathcal{P}_\infty$ such that
$$P[B_r(E)=B_r(\mathcal{P}_\infty)]>1-\delta,$$
where the equality is in the sense of isometry between pointed compact metric spaces. Trivially this
implies that, for every $a>0$,
$$P[B_a(\frac{a}{r}\cdot E)=B_a(\frac{a}{r}\cdot\mathcal{P}_\infty)]>1-\delta.$$
By scale invariance, $\frac{a}{r}\cdot E$ and $\frac{a}{r}\cdot\mathcal{P}_\infty$ have the same 
distribution as $E$ and $\mathcal{P}_\infty$ respectively. So we get that for every 
$\delta>0$, for every $a>0$, we can find a coupling of $E$ and $\mathcal{P}_\infty$ such that
$$P[B_a(E)=B_a(\mathcal{P}_\infty)]>1-\delta.$$
Recalling the definition of the local Gromov-Hausdorff distance $\rd_{LGH}$ (see e.g. \cite[Section 2.1]{CLG})
we obtain that, for every $\ve>0$ and every $\delta>0$, there exists a coupling
of $E$ and $\mathcal{P}_\infty$ such that 
$$P[\rd_{LGH}(E,\mathcal{P}_\infty)<\ve] > 1-\delta.$$
Clearly this implies that the L\'evy-Prokhorov distance between the distributions
of $E$ and $\mathcal{P}_\infty$ is $0$ and thus $E$ and $\mathcal{P}_\infty$ have the same distribution.
\end{proof}

\subsection{A new construction of the Brownian plane}
\label{newrep}

In this section, we provide a construction of the Brownian plane, which is different
from the one in \cite{CLG}. We then use Proposition \ref{caract-BP} and Theorem \ref{decotheo} to
prove the equivalence of the two constructions.

We consider a nine-dimensional Bessel process $R=(R_t)_{t\geq 0}$ starting from $0$ and,
conditionally on $R$, two independent Poisson point measures $\n'(\rd t,\rd \omega)$ and $\n''(\rd t,\rd \omega)$ on $\R_+\times C(\R_+,\mathcal{W})$ with 
the same intensity
$$2\,\ind{\{\r(\omega)\subset(0,\infty)\}}\,\rd t\,\N_{R_t}(\rd \omega).$$
It will be convenient to write
$$\n'=\sum_{i\in I} \delta_{(t_i,\omega^i)}\;,\quad \n''=\sum_{i\in J} \delta_{(t_i,\omega^i)},
$$
where the indexing sets $I$ and $J$ are disjoint.

We also consider the sum $\n=\n'+\n''$, which conditionally on $R$ is Poisson with intensity 
$$4\,\ind{\{\r(\omega)\subset(0,\infty)\}}\,\rd t\,\N_{R_t}(\rd \omega),$$
and we have 
\begin{equation}
\label{Poisson}
\n=\sum_{i\in I\cup J} \delta_{(t_i,\omega^i)}.
\end{equation}

We start by introducing the infinite random tree that will be crucial in our construction of the Brownian plane. For every 
$i\in I\cup J$,
write $\sigma_i=\sigma(\omega^i)$ and let $(\zeta^i_s)_{s\geq 0}$ be the lifetime process associated with $\omega^i$. Then the function 
$(\zeta^i_s)_{0\leq s\leq \sigma_i}$ codes a rooted compact real tree, which is denotes by $\t^i$,
and we write $p_{\zeta^i}$ for the canonical projection from $[0,\sigma_i]$ onto $\t^i$
(see e.g. \cite[Section 2]{LGtree} for basic facts about the coding of trees by continuous functions). We construct a 
random non-compact real tree $\t_\infty$ by grafting to the half-line $[0,\infty)$
(which we call the ``spine'') the tree $\t^i$ at point $t_i$, for every
$i\in I\cup J$. Formally, the tree $\t_\infty$ is obtained from the disjoint union
$$[0,\infty) \cup\Bigg(\bigcup_{i\in I\cup J} \t^i\Bigg)$$
by identifying the point $t_i$ of $[0,\infty)$ with the root $\rho_i$ of $\t^i$, for every $i\in I\cup J$. The metric $d_\infty$
on $\t_\infty$ is determined as follows. The restriction of $d_\infty$ to each tree $\t^i$ is (of course) the metric $d_{\t^i}$ on 
$\t^i$. If $x\in \t^i$ and $t\in[0,\infty)$, we take $d_\infty(x,t)= d_{\t^i}(x,\rho_i)+ |t_i-t|$. If $x\in\t^i$ and $y\in\t^j$, with $i\not = j$,
we take $d_\infty(x,y)= d_{\t^i}(x,\rho_i) + |t_i-t_j| + d_{\t^j}(\rho_j,y)$. By convention, $\t_\infty$
is rooted at $0$. The infinite tree $\t_\infty$ is equipped with a volume measure $\mathbf{V}$, which puts no
mass on the spine and whose restriction to each tree $\t^i$ is the natural volume measure 
on $\t^i$ defined as the image of Lebesgue measure on $[0,\sigma_i]$ under the projection $p_{\zeta^i}$.

We also define labels on the tree $\t_\infty$. The label $\Lambda_x$
of a vertex $x\in\t_\infty$ is defined by $\Lambda_x= R_t$ if $x=t$ belongs to the spine $[0,\infty)$, and 
$\Lambda_x= \wh \omega^i_s$ if $x=p_{\zeta^i}(s)$ belongs to the subtree $\t^i$, for some 
$i\in I\cup J$. Note that the mapping $x\mapsto \Lambda_x$ is continuous almost surely. 
For future use, we also notice that, if $x=p_{\zeta^i}(s)$ belongs to the subtree $\t^i$, 
the quantities $\omega^i_s(t)$, $0\leq t\leq \zeta^i_s$ are the labels of the ancestors of $x$
in $\mathcal{T}^i$. 

We will use the fact that labels are ``transient'' in the sense of the following lemma. Recall the notation
$\omega_*=W_*(\omega)$.

\begin{lemma}
\label{transience-lemma}
We have a.s.
$$\lim_{r \uparrow\infty}\Big(\inf_{i\in I\cup J, t_i>r} \omega^i_*\Big)= +\infty.$$
\end{lemma}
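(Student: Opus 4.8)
The plan is to reduce the claim to a Borel--Cantelli argument applied to a well-chosen sequence of levels along the spine, exploiting two facts: that the Bessel process $R$ tends to $+\infty$, and that, conditionally on $R$, the snake excursions $\omega^i$ with $t_i$ in a given bounded window form a Poisson collection whose intensity is controlled by the values of $R$ on that window. Fix an integer $n\geq 1$. On the event that $R_t\geq n$ for all $t\geq a_n$ (for a suitable sequence $a_n\uparrow\infty$ to be chosen), I want to show that all but finitely many of the excursions $\omega^i$ with $t_i>a_n$ satisfy $\omega^i_*\geq n/2$, say; letting $n\to\infty$ then gives the lemma.

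First I would make the transience of $R$ quantitative: since $R$ is a nine-dimensional Bessel process started from $0$, we have $R_t\to+\infty$ a.s., so for each $n$ there is a (random) finite time $a_n$ with $R_t\geq n$ for all $t\geq a_n$; moreover one can choose a deterministic sequence $b_n\uparrow\infty$ such that $P(\exists t\geq b_n: R_t< n)\to 0$ fast enough, or simply work on the a.s. event $\{a_n<\infty\text{ for all }n\}$ and argue pathwise. The key step is then to estimate, conditionally on $R$, the number of ``bad'' excursions
$$N_n:=\#\{i\in I\cup J: t_i>a_n,\ \omega^i_*< n/2\}.$$
By the Poisson structure of $\n=\n'+\n''$ with intensity $4\,\ind{\{\r(\omega)\subset(0,\infty)\}}\,\rd t\,\N_{R_t}(\rd\omega)$, and since on $\{t>a_n\}$ we have $R_t\geq n$, the conditional mean of $N_n$ is bounded by
$$4\int_{a_n}^\infty \rd t\,\N_{R_t}(W_*< n/2)\ \leq\ 4\int_{a_n}^\infty \rd t\,\N_{R_t}\big(W_*\leq n/2\big),$$
and by the explicit formula \eqref{hittingzero}, $\N_{x}(W_*\leq y)=\tfrac{3}{2(x-y)^2}$ for $y<x$; applied with $x=R_t\geq n$ and $y=n/2$ this is at most $\tfrac{3}{2}(R_t-n/2)^{-2}\leq 6 n^{-2}$ only pointwise, which is not integrable in $t$. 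So a single threshold will not do: I would instead tile $[a_n,\infty)$ and use that $R_t\to\infty$ to let the threshold grow. Concretely, fix a sequence $r_k\uparrow\infty$ (say $r_k=k$), let $a(k)$ be a time past which $R\geq r_{k+1}$ wherever possible, and count excursions with $t_i\in[a(k),a(k+1))$ and $\omega^i_*< r_k$; the conditional expectation of that count is at most $4(a(k+1)-a(k))\cdot\tfrac{3}{2}(r_{k+1}-r_k)^{-2}$, which is summable in $k$ once the $a(k)$ and $r_k$ are chosen appropriately (e.g. by first fixing deterministic levels $r_k$ and then letting $a(k)$ be random but a.s. finite, and noting the number of excursions in a fixed time window is a.s. finite anyway). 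Borel--Cantelli then gives that a.s. only finitely many excursions $\omega^i$ have $\omega^i_*$ bounded away from $\infty$, which is exactly the assertion of the lemma.

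The main obstacle I anticipate is precisely this non-integrability of $\int_{a_n}^\infty \N_{R_t}(W_*\le y)\,\rd t$ for a fixed cut-off $y$: because $\N_x(W_*\le y)$ only decays like $(x-y)^{-2}$ in the starting point and $R$ grows slowly (like $\sqrt t$), one cannot control the infinitely many excursions ``from above $r$'' uniformly with a single level. The fix is the diagonal/level-by-level scheme just sketched — letting the forbidden range grow together with the guaranteed lower bound on $R$ — and the delicate point is to make the bookkeeping of the (random) times $a(k)$ rigorous. An alternative route, which I would keep in reserve, is to exploit scale invariance or monotonicity: the infimum $\inf_{i: t_i>r}\omega^i_*$ is nondecreasing in $r$ up to the finitely-many excursions issue, so it suffices to show it is not bounded, and for that one can compare with the value of $R$ itself, using that an excursion $\omega^i$ with small $\omega^i_*$ but large $t_i$ is a rare event whose total rate over $t\in(r,\infty)$, against the law of $R_t$, is finite for $r$ large. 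Either way the heavy lifting is purely the Poisson/Bessel estimate; the rest is a standard Borel--Cantelli conclusion.
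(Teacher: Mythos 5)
Your plan stalls on an obstacle that is self-inflicted, and the repair you sketch for it does not work. When you estimate the conditional mean of the number of ``bad'' excursions you use $\N_{R_t}(W_*<A)\le \frac{3}{2}(R_t-A)^{-2}$, but the point measure $\n=\n'+\n''$ does \emph{not} have intensity $4\,\rd t\,\N_{R_t}(\rd\omega)$: its intensity carries the indicator $\ind{\{\r(\omega)\subset(0,\infty)\}}$, so only excursions with $\omega_*>0$ occur. The rate of excursions with $t_i>r$ and $\omega^i_*<A$ is therefore
$$4\int_r^\infty \rd t\,\N_{R_t}(0<W_*<A)\;=\;6\int_r^\infty \rd t\,\Big(\frac{1}{(R_t-A)^2}-\frac{1}{R_t^2}\Big)$$
by \eqref{hittingzero}, and the integrand is $O(A\,R_t^{-3})$, not $O(R_t^{-2})$. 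Since $R$ is a transient nine-dimensional Bessel process, $\int^\infty R_t^{-3}\,\rd t<\infty$ a.s., so this quantity is a.s.\ finite and tends to $0$ as $r\uparrow\infty$ on the event $\{\inf_{t\ge r}R_t\ge A\}$, whose probability tends to one. That single observation proves the lemma with no tiling and no Borel--Cantelli along a diagonal, and it is exactly the paper's argument. The cancellation produced by the restriction $\{W_*>0\}$ is not a technicality but the substance of the statement: without it one has $\int^\infty (R_t-A)^{-2}\,\rd t\asymp\int^\infty t^{-1}\,\rd t=\infty$, so a Poisson measure with the unrestricted intensity would a.s.\ contain infinitely many excursions reaching below $A$ with arbitrarily large $t_i$, and the lemma would be false.

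The level-by-level scheme cannot rescue the $(x-y)^{-2}$ bound. To guarantee $R_t\ge r_{k+1}$ for all $t\ge a(k)$ you must take $a(k)$ of order $r_{k+1}^2$ (up to logarithmic corrections), so $a(k+1)-a(k)$ is of order $r_{k+2}^2-r_{k+1}^2\ge 2r_{k+1}(r_{k+2}-r_{k+1})$, and the general term of your series is bounded below by a constant times $r_{k+1}(r_{k+2}-r_{k+1})/(r_{k+1}-r_k)^2\ge (r_{k+2}-r_{k+1})/(r_{k+1}-r_k)$. Summability of these ratios forces the increments $r_{k+1}-r_k$ to decay geometrically, hence to be summable, contradicting $r_k\uparrow\infty$. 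So no admissible choice of levels and times makes the bookkeeping close; the same objection applies to the ``alternative route'' you keep in reserve, which again asserts finiteness of a total rate that is infinite without the conditioning. The fix is simply to keep the indicator $\ind{\{\r\subset(0,\infty)\}}$ and subtract the two terms coming from \eqref{hittingzero}; after that your original one-threshold argument goes through verbatim.
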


\begin{proof}
It is enough to verify that, for every $A>0$, we have
$$\lim_{r\uparrow\infty} P\Big( \inf_{i\in I\cup J,\, t_i\geq r} \omega^i_* < A\Big) =0.$$
However by construction,
\begin{align*}
&P\Big( \inf_{i\in I\cup J, t_i\geq r} \omega^i_* < A\Big)\\
&\quad =P\Big( \inf_{t\geq r} R_t<A\Big) + E\Big[\mathbf{1}\Big\{  \inf_{t\geq r} R_t\geq A\Big\}
\Big(1-\exp\Big(-4\int_r^\infty \rd t \,\N_{R_t}(0<W_*< A)\Big)\Big)\Big]\\
&\quad= P\Big( \inf_{t\geq r} R_t<A\Big) + E\Big[\mathbf{1}\Big\{  \inf_{t\geq r} R_t\geq A\Big\}
\Big(1-\exp\Big(-6\int_r^\infty \rd t \big(\frac{1}{(R_t-A)^2} -\frac{1}{(R_t)^2}\big)\Big)\Big)\Big],
\end{align*}
using \eqref{hittingzero}. The desired result easily follows from the fact that the integral $\int^\infty \rd t\,(R_t)^{-3}$ is convergent.
\end{proof}

Until now, we have not used the fact that $\n$ is decomposed in the form $\n=\n'+\n''$. This decomposition 
corresponds intuitively to the fact that the trees $\t^i$ are grafted on the left side of the spine $[0,\infty)$
when $i\in I$, and on the right side when $i\in J$. We make this precise by defining an exploration process 
of the tree. To begin with, we define, for every $u\geq 0$,
$$\tau'_u:= \sum_{i\in I} \ind{\{t_i\leq u\}}\,\sigma_i\;,\quad \tau''_u:= \sum_{i\in J} \ind{\{t_i\leq u\}}\,\sigma_i\;.$$
Note that both $u\mapsto \tau'_u$ and $u\mapsto \tau''_{u}$ are nondecreasing and right-continuous. The left limits
of these functions are denoted by
$\tau'_{u-}$ and $\tau''_{u-}$ respectively, and $\tau'_{0-}=\tau''_{0-}=0$ by convention. 

Then, for every $s\geq 0$, there is a unique $u\geq 0$, such that $\tau'_{u-}\leq s\leq \tau'_{u}$, and:
\begin{enumerate}
\item[$\bullet$] Either there is a (unique) $i\in I$ such that $u=t_i$, and we set
$$\Theta'_s:= p_{\zeta^i}(s-\tau'_{t_i-})%\;,\quad \Omega'_s:=\wh\omega^i(s-\tau'_{u-})
.$$
\item[$\bullet$] Or there is no such $i$ and we set $\Theta'_s=u$% and $\Omega'_s=R_u$
.
\end{enumerate}
We define similarly $(\Theta''_s)_{s\geq 0}$% and $(\Omega''_s)_{s\geq 0}$
 by replacing $(\tau'_u)_{u\geq 0}$ by $(\tau''_u)_{u\geq 0}$ and $I$ by $J$.
Informally, $(\Theta'_s)_{s\geq 0}$ and $(\Theta''_s)_{s\geq 0}$ correspond to the exploration of respectively the left and the right side
of the tree $\t_\infty$. 
Noting that $\Theta'_0=\Theta''_0=0$, we define $(\Theta_s)_{s\in \R}$ 
%and $(\Omega_s)_{s\in\R}$
by setting
$$\Theta_s:=\left\{\begin{array}{ll}
\Theta'_s\quad&\hbox{if }s\geq 0,\\
\Theta''_{-s}\quad&\hbox{if }s\leq 0.
\end{array}
\right.
%\qquad\Omega_s:=\left\{\begin{array}{ll}
%\Omega'_s\quad&\hbox{if }s\geq 0,\\
%\Omega''_{-s}\quad&\hbox{if }s\leq 0.
%\end{array}
%\right.
$$
It is straightforward to verify that the mapping $s\mapsto \Theta_s$ is continuous.
We also note that the volume measure $\mathbf{V}$ on $\t_\infty$ is  the image of Lebesgue
measure on $\R$ under the mapping $s\mapsto \Theta_s$.

This exploration process allows us to define intervals on
$\t_\infty$. 
Let us make the convention that, if $s>t$, the ``interval'' $[s,t]$ is defined by $[s,t]=[s,\infty)\cup (-\infty,t]$. 
Then, for every $x,y\in\t_\infty$, there is a smallest interval $[s,t]$, with $s,t\in\R$, such that
$\Theta_s=x$ and $\Theta_t=y$, and we define  
$$[x,y]:=\{\Theta_r:r\in[s,t]\}.$$
Note that $[x,y]\not =[y,x]$ unless $x=y$. We may now turn to our construction
of the Brownian plane. We set, for every $x,y\in\t_\infty$,
\begin{equation}
\label{Dzero}
D^\circ_\infty(x,y)= \Lambda_x + \Lambda_y - 2\max\Bigg(\min_{z\in [x,y]} \Lambda_z,
\min_{z\in [y,x]} \Lambda_z\Bigg),
\end{equation}
and then 
\begin{equation}
\label{Dinfty}
D_\infty(x,y) = \inf_{x_0=x,x_1,\ldots,x_p=y} \sum_{i=1}^p D^\circ_\infty(x_{i-1},x_i)
\end{equation}
where the infimum is over all choices of the integer $p\geq 1$ and of the
finite sequence $x_0,x_1,\ldots,x_p$ in $\t_\infty$ such that $x_0=x$ and
$x_p=y$. Note that we
have 
\begin{equation}
\label{easybd}
D^\circ_\infty(x,y)\geq D_\infty(x,y)\geq |\Lambda_x -\Lambda_y|,
\end{equation}
for every $x,y\in\t_\infty$. Furthermore, it
is immediate from our definitions that 
$$D_\infty(0,x)=D^\circ_\infty(0,x)=\Lambda_x$$
for every $x\in \t_\infty$. As a consequence of the continuity of the mapping $s\mapsto \Lambda_{\Theta_s}$, we have
$D^\circ_\infty(x_0,x)\la 0$ (hence also $D_\infty(x_0,x)\la 0$) as $x\to x_0$, for every $x_0\in \t_\infty$. 

It is not hard to verify that $D_\infty$ is a pseudo-distance on $\t_\infty$.
We put $x\approx y$ if and only if $D_\infty (x,y)=0$ and we introduce the quotient space $\wt{\mathcal P}_\infty=\t_\infty /\approx$, which is
equipped with the metric induced by $D_\infty$ and with the distinguished 
point which is the equivalence class of $0$. The volume measure on
$\wt{\mathcal P}_\infty$ is the image of the volume measure $\mathbf{V}$
on $\t_\infty$ under the canonical projection. 

\begin{theorem}
\label{newBP}
The pointed metric space $\wt{\mathcal P}_\infty$ is locally isometric to the Brownian map and 
scale invariant. Consequently, $\wt{\mathcal P}_\infty$ is distributed as the Brownian plane $\mathcal{P}_\infty$.
\end{theorem}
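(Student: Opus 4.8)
The plan is to invoke Proposition~\ref{caract-BP}: since the distribution of the Brownian plane is characterized by local isometry with the Brownian map together with scale invariance, it suffices to verify these two properties for $\wt{\mathcal P}_\infty$. Scale invariance is the easier of the two. The scaling property of the Brownian snake --- under $\N_x$, the push-forward of $(W_s)$ under the map $(\zeta_s,\wh W_s)\mapsto(\lambda^2\zeta_{s/\lambda^4},\lambda\wh W_{s/\lambda^4})$ has law $\lambda^2\N_{\lambda x}$ --- combined with the fact that a $9$-dimensional Bessel process enjoys Brownian scaling (so $(\lambda R_{t/\lambda^2})_{t\ge0}$ is again a $9$-dimensional Bessel process from $0$) and the invariance of the Poisson intensity $2\,\ind{\{\r\subset(0,\infty)\}}\,\rd t\,\N_{R_t}(\rd\omega)$ under the joint scaling $(t,\omega)\mapsto(\lambda^2 t,\lambda\cdot\omega)$, shows that the whole labeled-tree data $(\t_\infty,\Lambda)$ transforms consistently; then $D^\circ_\infty$ scales by $\lambda$, hence so does $D_\infty$, and $\mathbf V$ scales appropriately, yielding $\lambda\cdot\wt{\mathcal P}_\infty\build{=}_{}^{\rm (d)}\wt{\mathcal P}_\infty$.

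The substantial part is the local isometry with the Brownian map. Here I would use Theorem~\ref{decotheo} in the reverse direction: start from the Brownian snake under $\N_0$ conditioned on $W_*=-a$ for a fixed $a>0$, which (by the known construction of the Brownian map from the Brownian snake, applied to the appropriately re-rooted/conditioned snake) produces, after the usual quotient by the Brownian-map pseudo-distance, a pointed space whose ball of radius $a$ around the distinguished point agrees with the ball of radius $a$ in the Brownian map with probability tending to $1$ as one varies the conditioning --- more precisely, the geodesic ball of radius $a$ in the Brownian map only depends on those snake paths that stay above level $-a$ after re-rooting at the minimum. Part~(i) of Theorem~\ref{decotheo} identifies the time-reversed minimizing path as a $9$-dimensional Bessel process run until its last passage at level $a$, and part~(ii) identifies the subtrees branching off it as a Poisson measure with intensity $2\,\ind{[0,\zeta_{s_\bm}]}(t)\,\ind{\{\omega_*>\wh W_{s_\bm}\}}\,\rd t\,\N_{W_{s_\bm}(t)}(\rd\omega)$. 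Shifting labels by $+a$ so that the minimum becomes $0$ and re-parametrizing the minimizing path by the Bessel process $R$, this intensity becomes exactly $2\,\ind{\{\r\subset(0,\infty)\}}\,\rd t\,\N_{R_t}(\rd\omega)$ on the portion of the spine below the last passage at $a$; the split into $\n'$ and $\n''$ reflects the left/right subtrees (the two independent point measures $\hat{\mathcal N}$, $\check{\mathcal N}$ of Theorem~\ref{decotheo}(ii)). Thus the data defining $\wt{\mathcal P}_\infty$, truncated to the part of the spine with $R_t\le a$ and to subtrees $\omega^i$ with $\omega^i_*\le a$ (equivalently, with some ancestor label $\le a$), has the same law as the corresponding data built from the Brownian snake conditioned on $W_*=-a$; and by Lemma~\ref{transience-lemma} this truncation captures, almost surely, all of $\t_\infty$ that can contribute to $D_\infty$-distances $\le a$ from $0$ (since $D_\infty(0,x)=\Lambda_x$ and a geodesic from $0$ to a point with label $<a$ cannot use vertices of label $\ge$ anything forcing it outside the truncated region). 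Consequently $B_a(\wt{\mathcal P}_\infty)$ coincides in law with $B_a$ of the quotient built from the conditioned snake, which in turn is isometric to $B_a$ of the Brownian map with high probability; letting $a\to0$ (or rather choosing $a$ depending on $\delta$) gives the local isometry.

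The main obstacle I anticipate is the bookkeeping needed to show that $B_a(\wt{\mathcal P}_\infty)$ is measurable with respect to --- and determined by --- only the truncated data, i.e.\ that the pseudo-distance $D_\infty$ restricted to vertices of label $\le a$ is unchanged if one deletes all subtrees $\t^i$ grafted at spine-points $t_i$ with $R_{t_i}>a$ and all excursions $\omega^i$ with $\min$ of ancestor labels $>a$. This requires the observation that any $D^\circ_\infty$-chain realizing a distance $\le a$ from $0$ may be taken to visit only vertices of label $\le a$ (using $D^\circ_\infty(0,x)=\Lambda_x$ and the triangle-type inequality \eqref{easybd}), together with Lemma~\ref{transience-lemma} to guarantee that beyond a (random but finite) height on the spine no subtree has any vertex of label $\le a$, so that the relevant part of $\t_\infty$ is compact and matches the compact tree underlying the conditioned-snake construction. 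Once this identification of truncated laws is in place, the passage ``ball of radius $a$ in the conditioned-snake space $=$ ball of radius $a$ in the Brownian map with probability $\ge 1-\delta$'' is exactly the local-isometry statement proved in \cite{CLG} (via the re-rooting invariance of the Brownian map and the fact that conditioning on $W_*=-a$ becomes negligible on a ball of radius $a$), so I would cite that rather than reprove it.
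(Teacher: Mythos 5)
Your proposal follows the same strategy as the paper: invoke Proposition \ref{caract-BP}, check scale invariance by Brownian-snake and Bessel scaling, and establish local isometry by coupling the data $(R,\n',\n'')$ of $\wt{\mathcal P}_\infty$ with the decomposition of a Brownian snake excursion at its minimum given by Theorem \ref{decotheo}. You also correctly isolate the key locality argument: by \eqref{easybd}, a chain realizing $D_\infty(x',y')\leq\eta$ can be forced to stay among vertices of label at most $2\eta$, and Lemma \ref{transience-lemma} guarantees that these vertices all lie in the truncated (compact) part of $\t_\infty$ that the coupling identifies with a ball in $\t_\zeta$; this is exactly the paper's $\eta$-versus-$2\eta$ argument. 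The one place where you diverge, and where your write-up is soft, is the Brownian-map side of the coupling. You condition directly on $\{W_*=-a\}$ and assert that the resulting space has the same ball of radius $a$ as the Brownian map ``with probability tending to $1$ as one varies the conditioning,'' citing \cite{CLG}. As stated this is not right: the ball of radius $a$ centered at the minimizing point is a macroscopic piece of the space, and the singular conditioning $\{W_*=-a\}$ certainly changes its law; what is true (and all that Definition \ref{local-isometry} requires) is agreement of balls of radius $\ve$ for $\ve$ small depending on $\delta$. The paper avoids having to quantify the effect of this conditioning altogether: it works under the probability measure $\N_0(\cdot\mid\sigma>1)$, where the snake construction produces a space $\mathbf M$ that is locally isometric to the Brownian map by a clean mixture-over-durations and scaling argument, and then reaches the conditioned law $\N_{-h}(\cdot\mid W_*=a)$ \emph{internally}, via the stopping time $H$ and Proposition \ref{SMPfort} applied to the excursion outside $(-H,\infty)$ attaining the overall minimum. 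If you keep your route, you must either prove the small-ball absolute-continuity statement for $\N_0(\cdot\mid W_*=-a)$ yourself (the statement in \cite{CLG} concerns a different construction and is not a literal citation), or reorganize as the paper does.
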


\begin{proof} The fact that $\wt{\mathcal P}_\infty$ is scale invariant is easy from our construction. 
Hence the difficult part of the proof is to verify that $\wt{\mathcal P}_\infty$ is locally isometric to the Brownian map.
Let us start by briefly recalling the construction of the Brownian map $\bm_\infty$. We 
argue under the conditional excursion measure
$\N_0^{(1)} =\N_0(\cdot\mid \sigma =1)$.
Under $\N_0^{(1)}$, the lifetime process $(\zeta_s)_{0\leq s\leq 1}$ is
a normalized Brownian excursion, and the tree $\t_\zeta$ coded by $(\zeta_s)_{0\leq s\leq 1}$
is the so-called CRT. As previously, $p_\zeta$ stands for the canonical projection from $[0,1]$
onto $\t_\zeta$. We can define intervals on $\t_\zeta$ in a way analogous to what we did before for $\mathcal{T}_\infty$:
If $x,y\in\t_\zeta$, $[x,y]=\{p_\zeta(r):r\in[s,t]\}$, where $[s,t]$ is the smallest interval
such that $p_\zeta(s)=x$ and $p_\zeta(t)=y$, using now the convention that
the interval  $[s,t]$ is defined by $[s,t]=[s,1]\cup[0,t]$ when $s>t$. Then we equip $\t_\zeta$ with Brownian labels
by setting $\Gamma_x=\wh W_s$ if $x=p_\zeta(s)$. For every $x,y\in\t_\zeta$, we define $D^\circ(x,y)$, resp.~$D(x,y)$, by exactly the same formula as in \eqref{Dzero}, resp.~\eqref{Dinfty}, replacing $\Lambda$ by $\Gamma$. 
We have again the bound $D(x,y)\geq |\Gamma_x-\Gamma_y|$. 
We then observe that
$D$ is a pseudo-distance on $\t_\zeta$, and the Brownian map $\bm_\infty$ is the associated 
quotient metric space. The distinguished point of $\bm_\infty$ is chosen as the (equivalence class of the) 
vertex $x_\bm$ of $\t_\zeta$ with minimal label, and we note that $D(x_\bm,x)= \Gamma_x-\Gamma_{x_\bm}=
\Gamma_x-W_*$ for every
$x\in\t_\zeta$. 

If we replace the normalized Brownian excursion by a Brownian excursion with duration $r>0$, that
is, if we argue under $\N_0^{(r)}$,
and perform the same construction, simple scaling arguments show that the resulting pointed 
metric space is distributed as $r^{1/4}\cdot \bm_\infty$ and is thus locally isometric
to $\bm_\infty$ (both are locally isometric to the Brownian plane). Consequently, under the
probability measure
$$\N_0(\cdot \mid \sigma >1) =\int_0^\infty \frac{\rd r}{2\sqrt{2\pi r^3}}\, \N^{(r)}_0(\cdot)$$
the preceding construction also yields a random pointed 
metric space which is locally isometric
to $\bm_\infty$. Let us write $\mathbf{M}$ for this random pointed metric space. We will argue that $\mathbf{M}$ is locally isometric to
$\wt{\mathcal P}_\infty$, which will complete the proof. Some of the arguments that follow are similar to those used in
\cite[Proof of Proposition 4]{CLG} to verify that the Brownian plane is locally isometric to the Brownian map.

We set for every $b>0$, 
$$A_b:=\int_0^\sigma \rd s\,\ind{\{\tau_{(-b,\infty)}(W_s)<\infty\}},$$
where we used the notation $\tau_D(\w)$ introduced in subsection \ref{prelisnake}. Still 
with the notation of this subsection, the random variable $A_b$ is $\mathcal{E}_b$-measurable, and 
it follows that 
$$H:=\inf\{b\geq 0: A_b =1\}$$
is a stopping time of the filtration $(\mathcal{E}_a)_{a>0}$. Observe that $\{H<\infty\}=\{\sigma>1\}$,
$\N_0$ a.e. From Proposition \ref{SMPfort}, we get that under the probability measure
$\N_0(\cdot \mid \sigma >1)$, and conditionally on the pair $(H,\z_{-H})$, the excursions
of the Brownian snake outside $(-H,\infty)$ form a Poisson point process with
intensity $\z_{-H}\,\N_{-H}$ (incidentally this also implies that $\z_{-H}>0$ a.e. on $\{\sigma >1\}$). Among the
excursions outside $(-H,\infty)$, there is exactly one that attains the minimal value $W_*$, and
conditionally on $H=h$ and $W_*=a$ (with $a<-h$), this excursion is distributed according to
$\N_{-h}(\cdot \mid W_*=a)$. 
 
Now compare Theorem \ref{decotheo} with the construction of $\wt{\mathcal P}_\infty$ given 
above to see that we can find a coupling of the Brownian snake under $\N_0(\cdot\mid \sigma >1)$
and of the triplet $(R,\n',\n'')$ determining the labeled tree $(\t_\infty, (\Lambda_x)_{x\in\t_\infty})$, in such a way that the following
properties hold. There exists a (random) real $\delta>0$ and an isometry $\mathcal{I}$ from
the ball $B_\delta(\t_\zeta)$ (centered at the distinguished vertex $x_\bm=p_\zeta(s_\bm)$) 
onto the ball $B_\delta(\t_\infty)$ (centered at $0$). This isometry preserves intervals, in the sense that if $x, y \in B_\delta(\t_\zeta)$, 
$\mathcal{I}([x,y] \cap B_\delta(\t_\zeta))= [\mathcal{I}(x),\mathcal{I}(y)] \cap B_\delta(\t_\infty)$. 
Furthermore, the isometry $\mathcal{I}$ preserves labels up to a shift by $-W_*$,
meaning that $\Lambda_{\mathcal{I}(x)}= \Gamma_{x} - W_*$ for every $x\in B_\delta(\t_\zeta)$. 
Consequently, we have 
$$D(x_\bm,x)= \Gamma_{x} - W_*=\Lambda_{\mathcal{I}(x)}= D_\infty(0,\mathcal{I}(x))$$ 
for every $x\in B_\delta(\t_\zeta)$.

Next we can choose $\eta>0$ small enough so that labels on $\t_\zeta\backslash B_\delta(\t_\zeta)$
are all strictly larger than $W_*+ 2\eta$ and labels on $\t_\infty \backslash B_\delta(\t_\infty)$ 
are all strictly larger than $2\eta$ (we use
Lemma \ref{transience-lemma} here). In particular, if $x\in \t_\zeta$, the condition 
$D(x_\bm,x)\leq 2\eta$ implies that $x\in B_\delta(\t_\zeta)$ and, if
$x'\in\t_\infty$, the condition $D_\infty(0,x')\leq 2\eta$ implies that $x'\in B_\delta(\t_\infty)$. We claim that
\begin{equation}
\label{claimdist}
D(x,y)=D_\infty(\mathcal{I}(x),\mathcal{I}(y)),
\end{equation}
 for every
$x,y\in \t_\zeta$ such that $D(x_\bm,x)\leq \eta$ and $D(x_\bm,y)\leq \eta$. 
To verify this claim, first note that, if  $x',y'\in \t_\infty$ are such that $D_\infty(0,x')=\Lambda_{x'}\leq 2\eta$ and $D_\infty(0,y')=\Lambda_{y'}\leq 2\eta$,
we can compute $D^\circ_\infty(x',y')$ using formula \eqref{Dzero}, and in the
right-hand side of this formula we may replace the interval $[ x',y']$ by $[ x',y']\cap B_\delta(\t_\infty)$
(because obviously the minimal value of $\Lambda$ on $[ x',y']$ is attained on $[ x',y']\cap B_\delta(\t_\infty)$). 
A similar replacement may be made in the analogous formula for $D^\circ(x,y)$ when $x,y\in \t_\zeta$
are such that $\Gamma_{x}\leq W_*+2\eta$ and $\Gamma_{y}\leq W_*+2\eta$. Using the isometry $\mathcal{I}$,
we then obtain that 
\begin{equation}
\label{claimdist1}
D^\circ(x,y)=D^\circ_\infty(\mathcal{I}(x),\mathcal{I}(y))
\end{equation}
 for every
$x,y\in \t_\zeta$ such that $D(x_\bm,x)\leq 2\eta$ and $D(x_\bm,y)\leq 2\eta$.
Then, let $x',y'\in \t_\infty$ be such that $\Lambda_{x'}\leq \eta$ and $\Lambda_{y'}\leq \eta$.
If we use formula \eqref{Dinfty} to evaluate $D_\infty(x',y')$, we may in the right-hand side
of this formula
 restrict our attention
to ``intermediate'' points $x_i$  whose label $\Lambda_{x_i}$ is smaller than $2\eta$
(indeed if one of the intermediate points has a label strictly greater than $2\eta$, the
sum in the right-hand side of \eqref{Dinfty} will be strictly greater than $2\eta\geq D_\infty(x',y')$,
thanks to \eqref{easybd}). A similar observation
holds if we use the analog of \eqref{Dinfty} to compute $D(x,y)$ when 
$x,y\in \t_\zeta$ are such that $D(x_\bm,x)\leq \eta$ and $D(x_\bm,y)\leq \eta$. 
Our claim \eqref{claimdist} is a consequence of the preceding considerations and \eqref{claimdist1}. 

 It follows from \eqref{claimdist} that
$\mathcal{I}$ induces an isometry from the ball $B_\eta(\mathbf{M})$ onto
the ball $B_\eta(\wt{\mathcal P}_\infty)$. This implies that $\mathbf{M}$ is locally isometric 
to $\wt{\mathcal P}_\infty$, and the proof is complete.
\end{proof}

In view of Theorem \ref{newBP}, we may and will write $\mathcal{P}_\infty$ instead of $\wt{\mathcal P}_\infty$ 
for the random metric space that we constructed in the first part of this subsection. We denote the canonical projection from $\t_\infty$ onto $\mathcal{P}_\infty$
by $\Pi$. The fact that $D_\infty(x_0,x)\la 0$ as $x\to x_0$, for every fixed $x_0\in\t_\infty$, shows that 
$\Pi$ is continuous. The argument of the preceding proof makes it possible to transfer  several known properties of the Brownian map to the
space $\mathcal{P}_\infty$. First, for every
$x,y\in \t_\infty$, we have 
$$D_\infty(x,y)=0\quad\hbox{if and only if}\quad D^\circ_\infty(x,y)=0.$$
Indeed this property will hold for $x$ and $y$ belonging to a sufficiently small
ball centered at $0$ in $\t_\infty$, by \cite[Theorem 3.4]{LGTopo} and the coupling argument explained in the preceding proof. 
The scale invariance of the Brownian plane then completes the argument. Similarly, we have the so-called
``cactus bound'', for every $x,y\in\t_\infty$ and every continuous path $(\gamma(t))_{0\leq t\leq 1}$
in $\pp_\infty$ such that $\gamma(0)=\Pi(x)$ and $\gamma(1)=\Pi(y)$,
\begin{equation}
\label{cactus-bound}
\min_{0\leq t\leq 1} D_\infty(0,\gamma(t))\leq \min_{z\in\llbracket x,y\rrbracket} \Lambda_z,
\end{equation}
where $\llbracket x, y\rrbracket$ stands for the geodesic segment between $x$
and $y$ in the tree $\t_\infty$. The bound \eqref{cactus-bound} follows from the
analogous result for the Brownian map \cite[Proposition 3.1]{LGGeo} and the coupling 
argument of the preceding proof.

Since labels correspond to distances from the distinguished point, we have, for every $r>0$,
$$B_r(\mathcal{P}_\infty)
= \Pi\Big( \{x\in\t_\infty: \Lambda_x \leq r\}\Big).$$
Recall the definition of the hull $B_r^\bullet(\mathcal{P}_\infty)$ in Section 1.
We have 
\begin{equation}
\label{formula-hull}
B_r^\bullet(\mathcal{P}_\infty)
= \pp_\infty\;\backslash\; \Pi\Big(\{x\in\t_\infty: \Lambda_y > r,\;\forall y\in\llbracket x,\infty\llbracket\}\Big),
\end{equation}
where $\llbracket x,\infty\llbracket$ is the geodesic path 
from $x$ to $\infty$ in the tree $\t_\infty$. The fact that $B_r^\bullet(\mathcal{P}_\infty)$ is contained
in the right-hand side of \eqref{formula-hull} is easy: If $x\in\t_\infty$ is such that $\Lambda_y > r$
for every $y\in\llbracket x,\infty\llbracket$, then $\Pi(\llbracket x,\infty\llbracket)$ gives a
continuous path going from $\Pi(x)$ to $\infty$ and staying outside the ball $B_r(\mathcal{P}_\infty)$.
Conversely, suppose that $x\in\t_\infty$ is such that
$$\min_{y\in \llbracket x,\infty\llbracket} \Lambda_y \leq r.$$
Then, if $(\gamma(t))_{t\geq 0}$ is any continuous path going from
$\Pi(x)$ to $\infty$ in $\pp_\infty$, the bound \eqref{cactus-bound} leads to
$$\min_{t\geq 0} D(0,\gamma(t))\leq \min_{y\in \llbracket x,\infty\llbracket} \Lambda_y \leq r,$$
and it follows that $\Pi(x)\in B_r^\bullet(\mathcal{P}_\infty)$. 

Write $\partial B_r^\bullet(\mathcal{P}_\infty)$ for the topological boundary
of $B_r^\bullet(\mathcal{P}_\infty)$. It follows from \eqref{formula-hull} that
\begin{equation}
\label{boundary-hull}
\partial B_r^\bullet(\mathcal{P}_\infty)
=  \Pi\Big(\{x\in\t_\infty: \Lambda_x=r\hbox{ and } \Lambda_y > r,\;\forall y\in\,\rrbracket x,\infty\llbracket\}\Big),
\end{equation}
with the obvious notation $\rrbracket x,\infty\llbracket$. The latter 
formula motivates the definition of the
(generalized) length of the 
boundary of $B_r^\bullet(\mathcal{P}_\infty)$. We observe that
this boundary contains (the image under $\Pi$ of) a single point
on the spine, corresponding to the last visit of $r$ by the process $R$,
$$L_r=\sup\{t\geq 0: R_t=r\}.$$
Any other point
$x\in\t_\infty$ such that $ \Lambda_x=r$
and $\Lambda_y > r$ for every $y\in\,\rrbracket x,\infty\llbracket$ must be of the form
$p_{\zeta^i}(s)$, for some $i\in I\cup J$, with $t_i>L_r$, and some $s\in[0,\sigma_i]$ 
such that the path $\omega^i_s$ hits $r$ exactly at its lifetime. For each fixed 
$i$ (with $t_i>L_r$), the ``quantity''
of such values of $s$ is measured by the total mass $\z_r(\omega^i)$ of the exit measure 
of $\omega^i$ from $(r,\infty)$.
Here  we use the same notation $\z_r=\langle \z^{(r,\infty)},1\rangle$ as previously.

Following the preceding discussion, we define, for every $r>0$,
\begin{equation}
\label{size-boundary}
Z_r:=\int \n(\rd t,\rd \omega)\,\ind{\{L_r<t\}}\,\z_r(\omega)=\sum_{i\in I\cup J, t_i> L_r}\z_r(\omega^i).
\end{equation}
We
observe that the quantities $\z_r(\omega^i)$ in \eqref{size-boundary} are well-defined since
each $\omega^i$ 
is a Brownian snake excursion starting from $R_{t_i}$ and the condition $t_i>L_r$ 
guarantees that $R_{t_i}>r$. 
We interpret $Z_r$ as measuring the size of the boundary of the hull $B_r^\bullet(\mathcal{P}_\infty)$.

Note that at the present stage, it is not clear that the random variable $Z_r$ coincides with the
one introduced in Proposition \ref{approx-exit} (which we  have not yet proved). At the end of subsection 
\ref{exit-process} below, we will verify  that the approximation result of Proposition \ref{approx-exit} holds with
the preceding definition of $Z_r$.

\section{The volume of hulls}

\subsection{The process of boundary lengths}
\label{exit-process}

Our main goal in this subsection is to describe the
distribution of the process $(Z_r)_{r>0}$. We fix $a>0$. 
By formula \eqref{size-boundary} and  the exponential formula for Poisson measures, we have, for every $\lambda\geq 0$,
\begin{equation}
\label{LaplaceEM}
E\Big[ \exp(-\lambda Z_a)]
= E\Big[ \exp \Big(-4\int_{L_a}^\infty \rd t\,\N_{R_t}\Big(\ind{\{\r\subset (0,\infty)\}}(1-e^{-\lambda \z_a})\Big)\Big)\Big].
\end{equation}
The quantity in the right-hand side will be computed via the following two lemmas.

\begin{lemma}
\label{Laplace1}
For every $x>a$ and $\lambda\geq 0$,
$$\N_{x}\Big(\ind{\{\r\subset (0,\infty)\}}(1-e^{-\lambda \z_a})\Big)= 
\frac{3}{2}\Bigg(\Big(x-a +(\frac{2\lambda}{3}+a^{-2})^{-1/2} \Big)^{-2} - x^{-2}\Bigg).$$
\end{lemma}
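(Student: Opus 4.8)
The plan is to identify the function $u(x) := \N_x\big(\ind{\{\r\subset(0,\infty)\}}(1-e^{-\lambda\z_a})\big)$, defined for $x > a$, as the solution of a boundary value problem for the ODE $u'' = 4u^2$, and then to exhibit the explicit solution. The key observation is that the integrand involves two constraints at two different levels: the event $\{\r\subset(0,\infty)\}$, i.e.\ $W_* \geq 0$, constrains the snake not to hit $0$, while $\z_a$ is the exit measure from $(a,\infty)$. Since $0 < a < x$, these constraints live at the nested intervals $(0,\infty) \subset (a,\infty)$ — wait, rather $(a,\infty)\subset(0,\infty)$ — and the natural approach is to first condition on $\mathcal{E}^{(a,\infty)}$ and apply the special Markov property.

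First I would write $\{\r\subset(0,\infty)\} = \{W_* \geq 0\}$ and decompose according to the excursions of the Brownian snake outside $(a,\infty)$. These excursions start from the single point $a$ (the only boundary point of $(a,\infty)$ that the snake can exit through, given $x > a$), and by Proposition \ref{SMP}, conditionally on $\mathcal{E}^{(a,\infty)}$ they form a Poisson measure with intensity $\z_a\,\N_a$. The event $\{W_*\geq 0\}$ holds iff none of these excursions hits $0$ \emph{and} the part of the snake staying in $(a,\infty)$ does not hit $0$ — but the latter is automatic since $a>0$. Also $\z_a$ is $\mathcal{E}^{(a,\infty)}$-measurable. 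Hence
\begin{align*}
u(x) &= \N_x\Big(1 - \ind{\{W_*\geq 0\}}e^{-\lambda\z_a}\Big) - \N_x\big(W_* < 0\big) \\
&= \N_x\Big(1 - e^{-\lambda \z_a}\,\exp\big(-\z_a\,\N_a(W_*<0)\big)\Big) - \N_x(W_*<0),
\end{align*}
using the exponential formula for the Poisson measure of excursions and the fact that each excursion, started from $a$, hits $0$ with ``probability'' $\N_a(W_*<0) = \frac{3}{2a^2}$ by \eqref{hittingzero}. Setting $\beta := \lambda + \N_a(W_* < 0) = \lambda + \frac{3}{2a^2}$, this gives
$$u(x) = \N_x\big(1 - e^{-\beta\z_a}\big) - \frac{3}{2x^2},$$
since $\N_x(W_* < 0) = \frac{3}{2x^2}$ again by \eqref{hittingzero}.

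Now I would substitute the known Laplace transform \eqref{Laplaceexit} of $\z_a$ under $\N_x$, namely $\N_x(1 - e^{-\beta\z_a}) = \big(\beta^{-1/2} + \sqrt{2/3}\,(x-a)\big)^{-2}$. Rewriting $\beta^{-1/2} = (\lambda + \frac{3}{2a^2})^{-1/2}$ and absorbing the constant $\sqrt{2/3}$ — here one uses $\sqrt{2/3} = (\frac{2}{3})^{1/2}$ and checks that $(\frac{2\lambda}{3} + a^{-2})^{-1/2} = \sqrt{\frac23}\,(\lambda + \frac{3}{2a^2})^{-1/2}$, so that $\beta^{-1/2}+\sqrt{2/3}(x-a) = \sqrt{2/3}\big(x - a + (\frac{2\lambda}{3}+a^{-2})^{-1/2}\big)$ only up to the factor $\sqrt{2/3}$; more carefully, $\N_x(1-e^{-\beta\z_a}) = \frac{3}{2}\big(x-a+(\frac{2\lambda}{3}+a^{-2})^{-1/2}\big)^{-2}$ after pulling out $(\sqrt{2/3})^{-2} = 3/2$ — yields exactly
$$u(x) = \frac{3}{2}\Big(x - a + \big(\tfrac{2\lambda}{3} + a^{-2}\big)^{-1/2}\Big)^{-2} - \frac{3}{2x^2},$$
which is the claimed formula. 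The main obstacle, and the step deserving the most care, is the bookkeeping in the special Markov property computation: one must be sure that conditioning on $\{W_*\geq 0\}$ interacts correctly with both the ``interior'' snake (staying in $(a,\infty)$, which never threatens to hit $0$) and the Poissonian family of exterior excursions, and that the subtraction of $\N_x(W_*<0)$ is handled consistently as a difference of two infinite-but-locally-finite quantities under $\N_x$ — exactly the kind of manipulation legitimized by Remark (i) following Proposition \ref{SMP}. An alternative, cleaner route avoiding the subtraction is to verify directly that the right-hand side, as a function of $x$ on $(a,\infty)$, solves $u'' = 4u^2$ with $u(x)\to 0$ as $x\to\infty$ and $u(a) = \frac{3}{2}(\frac{2\lambda}{3}+a^{-2}) - \frac{3}{2a^2} = \lambda$, and then invoke uniqueness for this boundary value problem (as in \cite[Chapter V]{LGZ}), together with the fact that $u(x) = \N_x(\ind{\{\r\subset(0,\infty)\}}(1-e^{-\lambda\z_a}))$ is known \emph{a priori} to solve the same problem; I would present both but lead with the special-Markov-property derivation since it is the more illuminating one.
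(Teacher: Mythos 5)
Your main argument is correct and is essentially the paper's own proof: write the integrand as a difference so that the term $\N_x(\r\not\subset(0,\infty))=\frac{3}{2x^2}$ splits off, use the special Markov property for the domain $(a,\infty)$ to replace $\ind{\{\r\subset(0,\infty)\}}$ by the conditional weight $\exp(-\frac{3}{2a^2}\z_a)$, and conclude with \eqref{Laplaceexit} applied to $\beta=\lambda+\frac{3}{2a^2}$; the algebra at the end is right. (The paper makes the finiteness bookkeeping explicit by noting that all integrands vanish on $\{\z_a=0\}$, where $\r\subset(a,\infty)$ by \eqref{range-exit}, so that everything lives on the finite-measure event $\{\z_a>0\}$ --- worth stating rather than leaving implicit.)

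One caution about your proposed ``alternative, cleaner route'': it does not work as stated. The function $u(x)=\N_x(\ind{\{\r\subset(0,\infty)\}}(1-e^{-\lambda\z_a}))$ is a \emph{difference} of two quantities each of which solves $u''=4u^2$, but the equation is nonlinear, so $u$ itself does not solve it --- and indeed the explicit right-hand side $\frac32\big(x-a+(\frac{2\lambda}{3}+a^{-2})^{-1/2}\big)^{-2}-\frac{3}{2x^2}$ fails to satisfy $u''=4u^2$ (the cross term in $4u^2$ has no counterpart in $u''$). Only the special-Markov-property derivation should be retained.
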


\begin{proof}
We have
\begin{align*}
\N_{x}\Big(\ind{\{\r\subset (0,\infty)\}}(1-e^{-\lambda \z_a})\Big)
&=\N_{x}\Big(1- \ind{\{\r\subset (0,\infty)\}}e^{-\lambda \z_a}\Big)- \N_{x}\Big(1- \ind{\{\r\subset (0,\infty)\}}\Big)\\
&=\N_{x}\Big(1- \ind{\{\r\subset (0,\infty)\}}e^{-\lambda \z_a}\Big)-\frac{3}{2x^2},
\end{align*}
by \eqref{hittingzero}. In order to compute the first term in the right-hand side, we observe that we have 
$\r\subset (a,\infty)\subset (0,\infty)$ on the event $\{\z_a=0\}$, $\N_x$ a.e., by \eqref{range-exit}. Therefore, we can write
\begin{align*}
\N_{x}\Big(1- \ind{\{\r\subset (0,\infty)\}}e^{-\lambda \z_a}\Big)
&=\N_x\Big(\ind{\{\z_a>0\}}\Big) - \N_{x}\Big(\ind{\{\z_a>0,\r\subset (0,\infty)\}}e^{-\lambda \z_a}\Big)\\
&=\N_x\Big(\ind{\{\z_a>0\}}\Big) - \N_{x}\Big(\ind{\{\z_a>0\}}\,e^{-\lambda \z_a}\,\exp\Big(-\frac{3\z_a}{2a^2}\Big)\Big)\\
&=\N_{x}\Big(1-\exp\Big(-(\lambda +\frac{3}{2a^2})\z_a\Big)\Big).
\end{align*}
In the second equality we used the special Markov property, together with formula \eqref{hittingzero}, to obtain that
the conditional probability of the event $\{\r\subset (0,\infty)\}$ given $\z_a$ is $\exp(-\frac{3\z_a}{2a^2})$. 
The formula of the lemma follows from the preceding two displays and \eqref{Laplaceexit}.
\end{proof}

\begin{lemma}
\label{Laplace2}
For every $\alpha\in(0,a)$,
$$ E\Big[ \exp \Big(6\int_{L_a}^\infty \rd t\,\Big(\frac{1}{(R_t)^2}-\frac{1}{(R_t-\alpha)^2}\Big)\Big)\Big]
= \Big(\frac{a-\alpha}{a}\Big)^3.
$$
\end{lemma}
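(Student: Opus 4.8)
The plan is to compute the expectation by first identifying the law of $R$ after its last passage time $L_a$, then reducing to a one-dimensional Feynman--Kac problem, and finally solving the resulting ODE, which becomes an Euler equation after a single substitution.

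Since $R$ is a nine-dimensional Bessel process started from $0$, it is transient with $R_t\to+\infty$, and Williams' last-exit decomposition of a transient diffusion applies: the shifted process $(R_{L_a+t})_{t\ge 0}$ has the law of the $h$-transform of $\mathrm{BES}(9)$ started from $a$, where $h(x)=\P_x(\mathrm{BES}(9)\text{ never hits }a)=1-(a/x)^7$ for $x\ge a$ and $h(x)=0$ for $x<a$. Denote this diffusion by $\rho=(\rho_t)_{t\ge 0}$; it starts from $a$, leaves $a$ immediately and stays in $(a,\infty)$ thereafter, and its generator is $\mathcal G=\tfrac12\partial^2+(\tfrac4x+\tfrac{h'}{h})\partial$. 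Writing $\mathcal L=\tfrac12\partial^2+\tfrac4x\partial$ for the generator of $\mathrm{BES}(9)$, one has $\mathcal L h=0$ (the functions $1$ and $x^{-7}$ are $\mathcal L$-harmonic, $x^{-7}$ being a scale function of $\mathrm{BES}(9)$), hence $\mathcal G f=h^{-1}\mathcal L(hf)$ on $(a,\infty)$.

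With $g(x):=x^{-2}-(x-\alpha)^{-2}$, the left-hand side of the lemma equals $\E\big[\exp\big(6\int_0^\infty g(\rho_t)\,\rd t\big)\big]$; note that $g<0$ on $(a,\infty)$ (because $0<x-\alpha<x$) and $g$ is bounded there (its only singularity is at $\alpha<a$), while $g(x)=O(x^{-3})$ at infinity and $\rho_t\to\infty$ fast enough, so $\int_0^\infty g(\rho_t)\,\rd t$ is a.s. a finite negative number and the exponential lies in $(0,1]$. To evaluate this I look for a bounded $C^2$ function $W:[a,\infty)\to(0,\infty)$ solving $\mathcal L W+6gW=0$ on $(a,\infty)$ with $W(a)=0$ and $W(\infty)=1$, the point being that $\phi:=W/h$ then satisfies $\mathcal G\phi+6g\phi=0$, extends continuously up to $a$ (the $0/0$ at $a$ is resolved by l'Hôpital) and to $+\infty$ with $\phi(\infty)=1$. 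The substitution $W(x)=x^{-4}y(x)$ kills the first-order term and the $x^{-2}$ part of the potential, turning the equation into the Euler equation $y''=12(x-\alpha)^{-2}y$, whose solutions are $y=(x-\alpha)^4$ and $y=(x-\alpha)^{-3}$ ($4$ and $-3$ being the roots of $r(r-1)=12$). Thus $W(x)=A\,x^{-4}(x-\alpha)^4+B\,x^{-4}(x-\alpha)^{-3}$; the condition $W(\infty)=1$ forces $A=1$ and $W(a)=0$ forces $B=-(a-\alpha)^7$, giving $W(x)=\frac{(x-\alpha)^7-(a-\alpha)^7}{x^4(x-\alpha)^3}$ and $\phi(x)=\frac{x^3\big((x-\alpha)^7-(a-\alpha)^7\big)}{(x-\alpha)^3(x^7-a^7)}$, so that $\phi(a)=\big(\tfrac{a-\alpha}{a}\big)^3$. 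Finally, by Itô's formula and $\mathcal G\phi+6g\phi=0$, the process $M_t:=\exp\big(6\int_0^t g(\rho_s)\,\rd s\big)\,\phi(\rho_t)$ is a local martingale, and it is bounded by $\sup_{[a,\infty]}\phi<\infty$, hence a true martingale; letting $t\to\infty$, using $\rho_t\to\infty$, $\phi(\rho_t)\to\phi(\infty)=1$ and dominated convergence yields $\big(\tfrac{a-\alpha}{a}\big)^3=\phi(a)=\E[M_0]=\E[M_\infty]=\E\big[\exp\big(6\int_0^\infty g(\rho_t)\,\rd t\big)\big]$, the assertion. (One can also bypass $\phi$: $\E\big[\exp\big(6\int_0^\infty g(\rho_t)\,\rd t\big)\big]=\lim_{b\downarrow a}W(b)/h(b)=W'(a+)/h'(a+)=\tfrac a7\cdot\tfrac{7(a-\alpha)^3}{a^4}$.)

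The genuinely delicate steps are the identification of $(R_{L_a+t})_{t\ge 0}$ via the last-exit decomposition and the handling of the entrance boundary of $\rho$ at $a$, together with the rigorous justification of the Feynman--Kac argument (the martingale property, the limit $t\to\infty$, and the fact that the boundary-value problem for $W$ has a unique bounded solution which coincides with the probabilistic quantity). By contrast, solving the ODE once the substitution $W=x^{-4}y$ is found, and the final arithmetic, are routine.
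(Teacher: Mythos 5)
Your proof is correct and the computations check out: the substitution $W=x^{-4}y$ does reduce the Feynman--Kac equation to the Euler equation $y''=12(x-\alpha)^{-2}y$, and the resulting $\phi(a)=\bigl(\frac{a-\alpha}{a}\bigr)^3$ is the right value. However, you take a genuinely different route from the paper. The paper never identifies the law of the post-$L_a$ process: it truncates the integral at $L_b$, reverses time from $L_b$ so that $(R_t)_{L_a\le t\le L_b}$ becomes a Bessel process of dimension $-5$ started from $b$ and run until it hits $a$, and then applies the absolute-continuity relation of Lemma 1 in \cite{Bessel} twice to reduce the whole expectation to the classical formula $E_x[\exp(-6\int_0^{\gamma_y}B_t^{-2}\,\rd t)]=(y/x)^3$; the product $(b/a)^3\bigl(\frac{a-\alpha}{b-\alpha}\bigr)^3$ then yields the answer as $b\uparrow\infty$. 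You instead work forward in time, invoking the last-exit decomposition to identify $(R_{L_a+t})_{t\ge0}$ as the $h$-transform of $\mathrm{BES}(9)$ with $h(x)=1-(a/x)^7$, and solve the associated boundary-value problem from scratch. The paper's route has the advantage that every probabilistic input (time reversal, the Girsanov-type change of measure, the Brownian hitting-time Laplace transform) is a quoted result, so no ODE is solved and no entrance-boundary issue arises; your route is more self-contained modulo Williams' decomposition, at the cost of the technicalities you correctly flag --- justifying the $h$-transform identification of the post-$L_a$ law, starting the martingale at the entrance boundary $a$ (handled by applying It\^o on $[\ve,t]$ and letting $\ve\downarrow0$, using that $M$ is bounded and $\phi$ is continuous at $a$), and checking that $\phi$ is bounded, which holds since it extends continuously to $[a,\infty]$. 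Both arguments are complete proofs of the same identity.
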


\begin{proof}
By dominated convergence, we have
$$E\Big[ \exp \Big(6\int_{L_a}^\infty \rd t\,\Big(\frac{1}{(R_t)^2}-\frac{1}{(R_t-\alpha)^2}\Big)\Big)\Big]
=\lim_{b\uparrow\infty} \downarrow E\Big[ \exp \Big(6\int_{L_a}^{L_b} \rd t\,\Big(\frac{1}{(R_t)^2}-\frac{1}{(R_t-\alpha)^2}\Big)\Big)\Big].$$
Let us fix $b>a$. By the time-reversal property of Bessel processes already mentioned after the statement
of Theorem \ref{decotheo}, the 
process $(\wt R_t)_{t\geq 0}$ defined by
$$\wt R_t= R_{(L_b-t)\vee 0}$$
is a Bessel process of dimension $-5$ started from $b$. 
Set $T_a:=\inf\{t\geq 0:\wt R_t=a\}=L_b-L_a$. Write $(B_t)_{t\geq 0}$ for a one-dimensional
Brownian motion which starts from $r$ under the probability measure $P_r$, and for every
$y\in \R$, let $\gamma_y:=\inf\{t\geq 0:B_t=y\}$. Then,
\begin{align*}
E\Big[ \exp \Big(6\int_{L_a}^{L_b} \rd t\,\Big(\frac{1}{(R_t)^2}-\frac{1}{(R_t-\alpha)^2}\Big)\Big)\Big]
&=E\Big[ \exp \Big(6\int_{0}^{T_a} \rd t\,\Big(\frac{1}{(\wt R_t)^2}-\frac{1}{(\wt R_t-\alpha)^2}\Big)\Big)\Big]\\
&=\Big(\frac{b}{a}\Big)^3\,
E_b\Big[ \exp \Big(-6\int_{0}^{\gamma_a}\,\frac{ \rd t}{(B_t-\alpha)^2}\Big)\Big],
\end{align*}
where the last equality is a consequence of the absolute continuity relation found as
Lemma 1 in \cite{Bessel}. Next observe that
$$
E_b\Big[ \exp \Big(-6\int_{0}^{\gamma_a} \frac{ \rd t}{(B_t-\alpha)^2}\Big)\Big]
=E_{b-\alpha}\Big[ \exp \Big(-6\int_{0}^{\gamma_{a-\alpha}}\frac{ \rd t}{(B_t)^2}\Big)\Big]
= \Big(\frac{a-\alpha}{b-\alpha}\Big)^3,$$
where the second equality is well known (and can again be viewed as a consequence of Lemma 1 in \cite{Bessel}). By
combining the last two displays, we get
$$E\Big[ \exp \Big(6\int_{L_a}^{L_b} \rd t\,\Big(\frac{1}{(R_t)^2}-\frac{1}{(R_t-\alpha)^2}\Big)\Big)\Big]
= \Big(\frac{b}{a}\Big)^3\times \Big(\frac{a-\alpha}{b-\alpha}\Big)^3,$$
and the desired result follows by letting $b\uparrow \infty$.
\end{proof}

We can now identify the law of $Z_a$.

\medskip
\noindent{\it Proof of Proposition \ref{process-exit} (i)}. We start from formula \eqref{LaplaceEM} and use first Lemma \ref{Laplace1}
and then Lemma \ref{Laplace2} to obtain, for every $\lambda\geq 0$,
\begin{align*}
E\Big[ \exp(-\lambda Z_a)]
&=E\Big[ \exp \Big(6\int_{L_a}^\infty \rd t\,\Big(\frac{1}{(R_t)^2}-\frac{1}{(R_t-(a-
(\frac{2\lambda}{3}+a^{-2})^{-1/2}))^2}\Big)\Big)\Big]\\
&= \Big(\frac{a-(a-
(\frac{2\lambda}{3}+a^{-2})^{-1/2})}{a}\Big)^3,
\end{align*}
which yields the desired result.
\hfill$\square$

\medskip
Our next goal is to obtain the law of the whole process $(Z_a)_{a\geq 0}$, where by
convention we take $Z_0=0$. To this end it is convenient to introduce a ``backward'' filtration
$(\g_a)_{a\geq 0}$, which we will define after introducing some notation. 
If $\mathrm{w}\in \mathcal{W}$, we set $\tau_a(\mathrm{w}):=\inf\{t\geq 0: \mathrm{w}(t)\notin(a,\infty)\}$, with
the usual convention $\inf\varnothing=\infty$. Then, let
$a\geq 0$ and $x>a$, and let $\omega=(\omega_s)_{s\geq 0}\in C(\R_+,\mathcal{W}_x)$ be such that
$\omega_s=x$ for all $s$ large enough. For every $s\geq 0$, we define ${\rm tr}_a(\omega)_s
\in \mathcal{W}_x$ by the formula
$${\rm tr}_a(\omega)_s= \omega_{\eta^{(a)}_s(\omega)},$$
where, for every $s\geq 0$,
$$\eta^{(a)}_s(\omega):=\inf\{r\geq 0:\int_0^r \rd u\,\ind{\{\zeta_{(\omega_u)}\leq \tau_a(\omega_u)\}} >s\}.$$
From the properties of the Brownian snake, it is easy to verify that $\N_x(\rd \omega)$ a.e.,
${\rm tr}_a(\omega)$ belongs to $C(\R_+,\mathcal{W}_x)$, and the paths ${\rm tr}_a(\omega)_s$
do not visit $(-\infty,a)$, and may visit $a$ only at their endpoint (what we have done is removing those paths that hit $a$ and
survive for some positive time after hitting $a$). Note that we are using a particular instance of the time change $\eta^D_s$ introduced when defining the
$\sigma$-field $\mathcal{E}^D$ in subsection \ref{prelisnake} (indeed, the $\sigma$-field
$\mathcal{E}^{(a,\infty)}$  is generated by the mapping $\omega\mapsto {\rm tr}_a(\omega)$ up
to negligible sets).

Recall formula \eqref{Poisson} for the point measure $\n$. For every $a\geq 0$, we let
$\g_a$ be the $\sigma$-field generated by the process $(R_{L_a+t})_{t\geq 0}$ and by
the point measure
$$\n^{(a)}:= \sum_{i\in I\cup J, {t_i}>L_a} \delta_{(t_i,{\rm tr}_a(\omega^i))}.$$
In the definition of $\n^{(a)}$, we keep only those excursions that start from the ``spine''
at a time greater than $L_a$ (so that obviously their initial point is greater than $a$) and we 
truncate these excursions at level $a$. Note that $\n^{(0)}=\n$. 

From our definitions it is clear that $\g_a\supset \g_b$ if $a<b$. Furthermore, it follows from
the measurability property of exit measures that $Z_a$ is $\g_a$-measurable, for
every $a>0$ (the point is that $\z_a(\omega^i)$ is a measurable function of 
${\rm tr}_a(\omega^i)$). We also notice that, for every $a>0$, the process $(R_{L_a+t})_{t\geq 0}$
is independent of $(R_t)_{0\leq t\leq L_a}$. This follows from last exit decompositions for 
diffusion processes, or in a more straightforward way this can be deduced from the 
time-reversal property already mentioned above. 

\begin{proposition}
\label{condi-Markov}
Let  $0<a<b$. Then, for every $\lambda\geq 0$,
$$E[\exp(-\lambda Z_a)\mid\g_b]
=\Big(\frac{b}{a+(b-a)(1+\frac{2\lambda a^2}{3})^{1/2}}\Big)^3
\;\exp\Big(-\frac{3Z_b}{2} \Big(\frac{1}{(b-a+(\frac{2\lambda}{3}+a^{-2})^{-1/2})^2}-\frac{1}{b^2}\Big)\Big).
$$
\end{proposition}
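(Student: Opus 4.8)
The plan is to condition on the backward $\sigma$-field $\g_b$, write $Z_a$ as a sum over excursions, and reduce to the special Markov property applied level by level. Recall from \eqref{size-boundary} that $Z_a=\sum_{i\in I\cup J,\,t_i>L_a}\z_a(\omega^i)$. The excursions entering this sum split into two groups: those with $L_b<t_i$, which are already recorded (after truncation at level $b$) in $\n^{(b)}$, and those with $L_a<t_i\leq L_b$, which are \emph{not} $\g_b$-measurable. First I would treat the first group. For such an excursion $\omega^i$ one has $R_{t_i}>b>a$, and by the special Markov property for the Brownian snake (Proposition \ref{SMP}), applied to $D=(a,\infty)$ with base point $R_{t_i}$ and conditioned on the event $\{\r\subset(0,\infty)\}$ as in Remark (ii) after Proposition \ref{SMP}, the conditional Laplace transform of $\z_a(\omega^i)$ given $\mathcal{E}^{(b,\infty)}(\omega^i)$ is governed by the function $v(x)=\N_x(\ind{\{\r\subset(0,\infty)\}}(1-e^{-\lambda\z_a}))$ of Lemma \ref{Laplace1}, evaluated starting from the exit measure of $\omega^i$ from $(b,\infty)$. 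Since $\z_b(\omega^i)$ is $\mathcal{E}^{(b,\infty)}(\omega^i)$-measurable and $\g_b$-measurable, this contributes a factor $\exp(-\z_b(\omega^i)\,v(b))$ where, by Lemma \ref{Laplace1},
$$v(b)=\frac{3}{2}\Big((b-a+(\tfrac{2\lambda}{3}+a^{-2})^{-1/2})^{-2}-b^{-2}\Big).$$
Summing over all $i$ with $t_i>L_b$ produces the factor $\exp(-\tfrac{3Z_b}{2}(\,(b-a+(\tfrac{2\lambda}{3}+a^{-2})^{-1/2})^{-2}-b^{-2}\,))$, which is exactly the exponential term in the statement.

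Next I would handle the second group, the excursions $\omega^i$ with $L_a<t_i\leq L_b$. These are independent of $\g_b$ \emph{except} for the information that $R$ does not return to level $b$ after time $t_i$ — but this is automatic from $t_i>L_b$ being false and... more carefully: conditionally on the whole Bessel process $R$ (hence on $L_a$ and $L_b$), the Poisson point measures $\n',\n''$ have intensity $2\,\ind{\{\r\subset(0,\infty)\}}\,\rd t\,\N_{R_t}(\rd\omega)$, so the excursions with base-time in $(L_a,L_b]$ are, conditionally on $R$, a Poisson point process with intensity $4\ind{(L_a,L_b]}(t)\,\ind{\{\r\subset(0,\infty)\}}\,\rd t\,\N_{R_t}(\rd\omega)$, and are conditionally independent of those with base-time $>L_b$. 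By the exponential formula their contribution to $E[e^{-\lambda Z_a}\mid R]$ is
$$\exp\Big(-4\int_{L_a}^{L_b}\rd t\,\N_{R_t}\big(\ind{\{\r\subset(0,\infty)\}}(1-e^{-\lambda\z_a})\big)\Big)
=\exp\Big(6\int_{L_a}^{L_b}\rd t\,\Big(\tfrac{1}{(R_t)^2}-\tfrac{1}{(R_t-\alpha)^2}\Big)\Big),$$
with $\alpha=a-(\tfrac{2\lambda}{3}+a^{-2})^{-1/2}$, using Lemma \ref{Laplace1}. Now I must take the expectation over $R$; the subtlety is that I should condition on $\g_b$, but the only part of $R$ appearing here is $(R_t)_{L_a\leq t\leq L_b}$, which is independent of $(R_{L_b+t})_{t\geq 0}$ and of $\n^{(b)}$, hence independent of $\g_b$. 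So the conditional expectation equals the unconditional expectation of the displayed quantity, and by (the finite-$b$ version inside the proof of) Lemma \ref{Laplace2} this equals $(b/a)^3(a-\alpha)^3/(b-\alpha)^3$. Substituting $a-\alpha=(\tfrac{2\lambda}{3}+a^{-2})^{-1/2}$ and $b-\alpha=b-a+(\tfrac{2\lambda}{3}+a^{-2})^{-1/2}$, a short algebraic manipulation (multiplying numerator and denominator inside by $(\tfrac{2\lambda}{3}+a^{-2})^{1/2}$ and using $a^2(\tfrac{2\lambda}{3}+a^{-2})=1+\tfrac{2\lambda a^2}{3}$) rewrites this prefactor as $\big(b/(a+(b-a)(1+\tfrac{2\lambda a^2}{3})^{1/2})\big)^3$, matching the polynomial factor in the statement.

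The main obstacle, and the step I would write most carefully, is the conditional independence structure: justifying that, conditionally on $\g_b$, the excursions contributing to $Z_a$ split cleanly into (a) a part that is a deterministic $\g_b$-measurable functional — namely $\exp(-\tfrac{3Z_b}{2}v(b))$ — obtained by applying the special Markov property \emph{inside} each already-recorded excursion, and (b) a part depending only on $(R_t)_{L_a\leq t\leq L_b}$ and fresh Poisson randomness, both independent of $\g_b$. For (a) one needs that conditioning on $\g_b$ (which, restricted to a single excursion $\omega^i$ with $t_i>L_b$, amounts to conditioning on ${\rm tr}_b(\omega^i)$, i.e. on $\mathcal{E}^{(b,\infty)}(\omega^i)$) is compatible with the conditional-on-$\{\r\subset(0,\infty)\}$ form of the special Markov property; this is exactly Remark (ii) following Proposition \ref{SMP}, and one applies it with $D=(b,\infty)$ absorbed first and then iterates to level $a$, or equivalently directly with the pair $(b,\infty)\supset(a,\infty)$ using that $\z_a$ is a measurable function of the excursions outside $(b,\infty)$. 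For (b), one invokes that for $a>0$ the process $(R_{L_a+t})_{t\geq0}$ is independent of $(R_t)_{0\leq t\leq L_a}$ (noted just before the statement) together with the conditional independence, given $R$, of disjoint pieces of the Poisson measure $\n$. Once these independence facts are in place, multiplying the factors from (a) and (b) and performing the elementary algebra above yields the claimed formula, and Proposition \ref{process-exit}(i) is recovered by setting $a\to$ anything and $b\downarrow a$ or by taking $b\uparrow\infty$ with $Z_b\to 0$.
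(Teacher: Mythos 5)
Your proof is correct, and its skeleton is the same as the paper's: you split $Z_a=Y_{a,b}+\wt Y_{a,b}$ according to whether $t_i>L_b$ or $L_a<t_i\leq L_b$, note that $\wt Y_{a,b}$ is independent of $\sigma(Y_{a,b})\vee\g_b$ (via the last-exit independence of $(R_{L_b+t})_{t\geq0}$ from $(R_t)_{t\leq L_b}$ and the restriction property of Poisson measures), and obtain the exponential factor from the conditional special Markov property together with Lemma \ref{Laplace1} evaluated at $x=b$. The one place you diverge is the evaluation of $E[\exp(-\lambda\wt Y_{a,b})]$: the paper gets it indirectly, by computing $E[\exp(-\lambda Y_{a,b})]$ from the conditional formula and the known law of $Z_b$ (Proposition \ref{process-exit}(i)) and then dividing $E[\exp(-\lambda Z_a)]$ by it, whereas you compute it directly with the exponential formula for the Poisson measure restricted to base-times in $(L_a,L_b]$, Lemma \ref{Laplace1}, and the finite-$b$ display inside the proof of Lemma \ref{Laplace2}. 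Your route is slightly more self-contained (it reruns the proof of Proposition \ref{process-exit}(i) on the interval $(L_a,L_b]$ instead of quoting its conclusion), at the cost of redoing the Bessel time-reversal computation; the paper's division trick buys brevity. Your algebra for the polynomial factor checks out. One small slip in your closing aside: $Z_b$ does \emph{not} tend to $0$ as $b\uparrow\infty$ (it is Gamma with mean $b^2$), so Proposition \ref{process-exit}(i) is not recovered that way; but this remark plays no role in the proof of the proposition itself.
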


If $b>0$ is fixed, the proposition shows that that the process $(Z_{b-a})_{0\leq a<b}$ is 
time-inhomoge\-neous Markov with respect to the (forward) filtration $(\g_{b-a})_{0\leq a<b}$, and
identifies the Laplace transform of the associated transition kernels. Since the law
of $Z_b$ is also given by Proposition \ref{process-exit} (i), this completely characterizes the law
of the process $(Z_a)_{a\geq 0}$. The more explicit description of this law
given in Proposition \ref{process-exit} (ii) will be derived later. 

\begin{proof}
Recall that $0<a<b$ are fixed. We write
$$Z_a=Y_{a,b} + \wt Y_{a,b},$$
where
$$Y_{a,b}:=\sum_{i\in I\cup J, t_i> L_b}\z_a(\omega^i)
\;,\quad \wt Y_{a,b} :=\sum_{i\in I\cup J, L_a<t_i\leq L_b} \z_a(\omega^i)
.$$
From the fact that $(R_{L_b+t})_{t\geq 0}$
is independent of $(R_t)_{0\leq t\leq L_b}$ and properties of Poisson measures, it
easily follows that 
$Y_{a,b}$ and $\wt Y_{a,b}$ are independent, and more precisely $\wt Y_{a,b}$ is independent
of $\sigma(Y_{a,b})\vee \g_b$. This implies that
\begin{equation}
\label{Marko-tech1}
E[\exp(-\lambda Z_a)\mid\g_b]=E[\exp(-\lambda \wt Y_{a,b})]\,E[\exp(-\lambda Y_{a,b})\mid\g_b].
\end{equation}

From the special Markov property (see also the remark following Proposition \ref{SMP}), we have
\begin{align*}
E[\exp(-\lambda Y_{a,b})\mid\g_b]&
=E\Bigg[\prod_{i\in I\cup J, t_i> L_b} \exp(-\lambda \z_a(\omega^i))\,\Bigg|\, \g_b\Bigg]\\
&=\exp\Bigg(-\sum_{i\in I\cup J, t_i> L_b}\z_b(\omega^i)\N_b\Big(\ind{\{\r\subset(0,\infty)\}}(1-e^{-\lambda\z_a})\Big)
\Bigg)\\
&=\exp\Big(-Z_b\,\N_b\Big(\ind{\{\r\subset(0,\infty)\}}(1-e^{-\lambda\z_a})\Big)
\Big)\\
&=\exp\Bigg(-\frac{3Z_b}{2}
\Bigg(\Big( b -a +(\frac{2\lambda}{3}+a^{-2})^{-1/2} \Big)^{-2} -b^{-2}\Bigg)\Bigg),
\end{align*}
where the last equality is Lemma \ref{Laplace1}. 

Using Proposition \ref{process-exit} (i), we have thus,
\begin{align*}
E[\exp(-\lambda Y_{a,b})]
&=\Bigg(1+ b^2\Bigg(\Big( b -a +(\frac{2\lambda}{3}+a^{-2})^{-1/2} \Big)^{-2} -b^{-2}\Bigg)
\Bigg)^{-3/2}\\
&= \Big(\frac{b}{b -a +(\frac{2\lambda}{3}+a^{-2})^{-1/2}}\Big)^{-3},
\end{align*}
and since 
$Y_{a,b}$ and $\wt Y_{a,b}$ are independent,
\begin{align*}
E[\exp(-\lambda \wt Y_{a,b})]&= E[\exp(-\lambda Z_a)]\times (E[\exp(-\lambda Y_{a,b})])^{-1}\\
&=\Big(1+\frac{2\lambda a^2}{3}\Big)^{-3/2}\Big(\frac{b}{b -a +(\frac{2\lambda}{3}+a^{-2})^{-1/2}}\Big)^{3}\\
&= \Bigg(\frac{b}{a+(b-a)\Big(1+\frac{2\lambda a^2}{3}\Big)^{1/2}}\Bigg)^3.
\end{align*}
The statement of the proposition follows from \eqref{Marko-tech1} and the preceding calculations.
\end{proof}

We will now identify the transition kernels whose Laplace transform appears in the previous 
proposition. To this end, we recall the discussion of subsection \ref{CSBPpreli}, 
which we will apply with the particular value $c= \sqrt{8/3}$.

\begin{proposition}
\label{conditionedCSBP}
Let $\rho>0$ and $x>0$. The finite-dimensional marginal distributions of $(Z_{\rho-a})_{0\leq a\leq \rho}$
knowing that $Z_\rho=x$ coincide with those of the continuous-state branching process with
branching mechanism $\psi(u)=\sqrt{8/3}\,u^{3/2}$ started from $x$ and conditioned
on extinction at time $\rho$. 
\end{proposition}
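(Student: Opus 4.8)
The plan is to show that the two families of finite‑dimensional distributions agree by comparing their Laplace transforms step by step. Fix $\rho>0$ and $0<s_1<s_2<\cdots<s_p<\rho$, and set $a_k=\rho-s_k$, so that $0<a_p<\cdots<a_1<\rho$. The idea is to compute, for nonnegative $\lambda_1,\ldots,\lambda_p$, the joint Laplace transform
$$E\Big[\exp\Big(-\sum_{k=1}^p \lambda_k Z_{a_k}\Big)\,\Big|\,Z_\rho=x\Big]$$
and to recognise it as the corresponding quantity for the conditioned CSBP. The crucial structural input is that, by Proposition~\ref{condi-Markov}, the process $(Z_{\rho-a})_{0\le a<\rho}$ is Markov with respect to the forward filtration $(\g_{\rho-a})$, with explicitly known Laplace transform of its transition kernels; moreover $Z_{a}$ is $\g_a$‑measurable and $Z_\rho$ is $\g_\rho$‑measurable. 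So one can condition successively: first disintegrate on $Z_\rho=x$, then use $E[\exp(-\lambda_p Z_{a_p})\mid \g_{a_{p-1}}]$, which by Proposition~\ref{condi-Markov} depends on $\g_{a_{p-1}}$ only through $Z_{a_{p-1}}$, and iterate downward to $Z_{a_1}$ and finally $Z_\rho=x$.

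First I would translate Proposition~\ref{condi-Markov} into the notation of subsection~\ref{CSBPpreli}: with $c=\sqrt{8/3}$, one checks that the quantity $\big(\tfrac b{a+(b-a)(1+\tfrac{2\lambda a^2}3)^{1/2}}\big)^3$ together with the exponential factor $\exp\big(-\tfrac{3Z_b}2(\ldots)\big)$ is exactly the Laplace transform $\int e^{-\lambda y}\,\pi_{s,t}(x,\rd y)$ given by formula \eqref{CSBPtech2}, under the identification $x\leftrightarrow Z_b$, ``time'' $s\leftrightarrow \rho-b$, ``time'' $t\leftrightarrow \rho-a$, and recalling $\tfrac{c^2}4=\tfrac23$. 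This is a direct but slightly tedious algebraic match: one substitutes $c^2/4=2/3$ into \eqref{CSBPtech2}, simplifies the $(\rho-s)$, $(\rho-t)$, $(t-s)$ combinations (which become $b$, $a$, $b-a$), and compares the two exponential terms using the explicit form of $\phi$. Once this identification is in place, the forward Markov property of $(Z_{\rho-a})_a$ under $(\g_{\rho-a})_a$, shown by Proposition~\ref{condi-Markov}, says precisely that the law of $(Z_{a_1},\ldots,Z_{a_p})$ given $Z_\rho=x$ has the prescribed product‑of‑kernels form $\pi_{0,\rho-a_1}(x,\rd y_1)\,\pi_{\rho-a_1,\rho-a_2}(y_1,\rd y_2)\cdots$, i.e.\ is the law of the conditioned CSBP sampled at times $\rho-a_1<\cdots<\rho-a_p$, which are exactly $s_1<\cdots<s_p$.

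More carefully, since $\N$‑type issues do not arise here ($Z_\rho$ is an honest positive random variable with the Gamma law of Proposition~\ref{process-exit}(i)), the conditioning on $\{Z_\rho=x\}$ is a genuine regular conditional distribution. I would argue by downward induction on $k=p,p-1,\ldots,1$: the inductive hypothesis is that
$$E\Big[\exp\Big(-\textstyle\sum_{j=k}^p\lambda_j Z_{a_j}\Big)\,\Big|\,\g_{a_{k-1}}\Big]=\exp\big(-g_k(Z_{a_{k-1}})\big)$$
for an explicit function $g_k$ built by composing the transition Laplace transforms of Proposition~\ref{condi-Markov}; the induction step uses $\g_{a_{k-1}}\subset\g_{a_k}^{\;c}$—more precisely $\g_{a_k}\supset\g_{a_{k-1}}$—wait, one must be careful: $\g$ is decreasing in its index, so $\g_{a_k}\subset\g_{a_{k-1}}$ when $a_k<a_{k-1}$; thus conditioning $Z_{a_k}$ on $\g_{a_{k-1}}$ is not directly Proposition~\ref{condi-Markov}. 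Instead one should condition in the natural order: apply Proposition~\ref{condi-Markov} with $\lambda$ replaced by the running exponent to pass from information at level $b=a_{k-1}$ (larger, hence coarser $\sigma$‑field) down to level $a=a_k$, using $E[\,\cdot\mid\g_{a_{k-1}}]$ where $a_{k-1}>a_k$, and note $\g_{a_{k-1}}\subset\g_{a_k}$ is false but $\g_{a_{k-1}}$ is exactly the conditioning $\sigma$‑field appearing in Proposition~\ref{condi-Markov} when we take ``$b$''$=a_{k-1}$, ``$a$''$=a_k$. Then $Z_{a_1},\ldots,Z_{a_p}$ are all $\g_{a_p}$‑measurable and one conditions the whole tower on $\g_\rho$ at the top, extracting the conditional law given $Z_\rho=x$.

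The main obstacle I expect is the bookkeeping in this tower of conditional expectations: ensuring that at each stage the conditional Laplace transform of $Z_{a_k}$ given the coarser $\sigma$‑field $\g_{a_{k-1}}$ depends only on $Z_{a_{k-1}}$ (which is the content of Proposition~\ref{condi-Markov}, but must be invoked with the exponent accumulated from the already‑integrated variables $Z_{a_{k+1}},\ldots,Z_{a_p}$), and then matching the resulting composition of explicit Laplace transforms with the Chapman–Kolmogorov composition $\pi_{0,s_1}\pi_{s_1,s_2}\cdots\pi_{s_{p-1},s_p}$ of the conditioned‑CSBP kernels. Because both sides are Markov chains whose one‑step Laplace transforms have been matched in the algebraic identification above, the composed transforms automatically agree, so strictly speaking once the single‑step match is done the multi‑step statement is a formal consequence of the Markov property on both sides; the only real work is (a) the algebraic verification that the formula of Proposition~\ref{condi-Markov} equals \eqref{CSBPtech2} with $c^2/4=2/3$, and (b) checking that Proposition~\ref{condi-Markov}, which gives conditional Laplace transforms for a single $Z_a$ given $\g_b$, does upgrade to the full finite‑dimensional Markov statement—this follows because $\g_b$ contains all of $Z_{b'}$ for $b'\ge b$ and the right‑hand side depends on $\g_b$ only through $Z_b$, which is the defining property of a Markov chain with respect to the filtration $(\g_{\rho-a})_a$. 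Assembling these two points yields the claimed equality of finite‑dimensional marginals.
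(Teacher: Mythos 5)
Your proposal is correct and follows essentially the same route as the paper: identify the one-step conditional Laplace transform of Proposition \ref{condi-Markov} with formula \eqref{CSBPtech2} for the kernels $\pi_{s,t}$ (with $c=\sqrt{8/3}$, so $c^2/4=2/3$), then propagate this through the tower of conditional expectations using the Markov property of $(Z_{\rho-a})_a$ with respect to the forward filtration $(\g_{\rho-a})_a$. One caveat: the $\sigma$-field inclusions asserted mid-argument are garbled — since $\g_a\supset\g_b$ for $a<b$ and $a_{k-1}>a_k$, one has $\g_{a_{k-1}}\subset\g_{a_k}$ (so your claim that this is false is itself false) — but this does not affect the substance, as you ultimately invoke Proposition \ref{condi-Markov} with the correct roles $b=a_{k-1}$, $a=a_k$ and correctly note that $Z_{b'}$ is $\g_b$-measurable for $b'\geq b$.
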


\begin{proof}
Recall the notation introduced in subsection \ref{CSBPpreli}.
By comparing the right-hand side of \eqref{CSBPtech2} with the formula of
Proposition \ref{condi-Markov}, we immediately see that, for $0\leq s<t<\rho$,
$$E[\exp(-\lambda Z_{\rho-t})\mid\g_{\rho-s}]= \int e^{-\lambda y}\,\pi_{s,t}(Z_{\rho-s},dy).$$
Arguing inductively, we obtain that, for every $0<s_1<\ldots<s_p<\rho$, the conditional distribution 
of $(Z_{\rho-{s_1}},\ldots,Z_{\rho-s_p})$ knowing $\g_\rho$ is $\pi_{0,s_1}(Z_{\rho},\rd y_1)\pi_{s_1,s_2}(y_1,\rd y_2)\ldots
\pi_{s_{p-1},s_p}(y_{p-1},\rd y_p)$. The desired result follows.
\end{proof}

We can now complete the proof of Proposition \ref{process-exit}.

\medskip
\noindent{\it Proof of Proposition \ref{process-exit} (ii)}. 
We first verify that $Z_a$ and $\wt X_{-a}$ have the same distribution, for every fixed $a>0$. 
Let $\lambda >0$ and set $f(y)=e^{-\lambda y}$ to simplify notation. By the properties
of the process $\wt X$, we have
$$E[f(\wt X_{-a})]=\lim_{x\uparrow \infty} E_x[f(X_{T-a})\,\ind{\{a\leq T\}}],$$
where $T=\inf\{t\geq 0: X_t=0\}$ as previously.
On the other hand, recalling the definition of the functions $\phi_t$ in subsection \ref{CSBPpreli},
\begin{align*}
E_x[f(X_{T-a})\,\ind{\{a\leq T\}}]&=\lim_{n\uparrow\infty} \sum_{k=1}^\infty 
E_x[\ind{\{a+\frac{k-1}{n}<T\leq a+\frac{k}{n}\}}\, f(X_{k/n})]\\
&=\lim_{n\uparrow\infty} \sum_{k=1}^\infty 
E_x\Big[f(X_{k/n})\,P_{X_{k/n}}\Big(a-\frac{1}{n}<T\leq a\Big)\Big]\\
&=\lim_{n\uparrow\infty} \sum_{k=1}^\infty 
E_x\Big[f(X_{k/n})\int_{a-1/n}^a \phi_b(X_{k/n})\,\rd b\Big]\\
&=E_x\Big[\int_0^\infty f(X_t)\,\phi_a(X_t)\,\rd t\Big],
\end{align*}
where dominated convergence is easily justified by the fact that $E_x[T]<\infty$ and $\phi_b(0)=0$
for every $b>0$.
Now use the form of $\phi_a$
together with formula \eqref{CSBPtech} (with $c= \sqrt{8/3}$) to see that the right-hand side of the
last display is equal to
\begin{align*}
&\int_0^\infty \rd t\,\frac{3x}{a^3}\,(\lambda + \frac{3}{2a^2})^{-3/2}\,\Big((\lambda + \frac{3}{2a^2})^{-1/2}
+\sqrt{\frac{2}{3}} t\Big)^{-3}\,\exp\Big(-x\Big((\lambda + \frac{3}{2a^2})^{-1/2}
+\sqrt{\frac{2}{3}} t\Big)^{-2}\Big)\\
&\quad= (1+\frac{2\lambda a^2}{3})^{-3/2}\;\Big(1- \exp(-x(\lambda + \frac{3}{2a^2})^{-1/2})\Big).
\end{align*}
We then let $x\uparrow \infty$ to get that
$$E[\exp(-\lambda \wt X_{-a})]=(1+\frac{2\lambda a^2}{3})^{-3/2} = E[\exp(-\lambda Z_a)]$$
by assertion (i) of the proposition.  

Knowing that $Z_a$ and $\wt X_{-a}$ have the same distribution, the proof is completed as 
follows. We observe that, for every $a>0$, the law of $(\wt X_{-a+t})_{0\leq t\leq a}$ conditionally
on $\wt X_{-a}=x$ coincides with the law of $X$ started from $x$ and conditioned on
extinction at time $a$ (we leave the easy verification to the reader). By comparing with
Proposition \ref{conditionedCSBP}, we get the desired statement.
\hfill$\square$

\medskip
As a consequence of Proposition \ref{process-exit}, the process $(Z_r)_{r>0}$ has a c\`adl\`ag modification, and
from now on we deal only with this modification. We conclude this subsection by proving Proposition \ref{approx-exit}:
We need to verify that our definition of the random variable $Z_r$ matches the approximation given in 
this proposition. 

\medskip
\noindent{\it Proof of Proposition \ref{approx-exit}}.
If $x\in \t_\infty$ and $x$ is not on the spine, the point $\Pi(x)$ belongs to 
$B_r^\bullet(\mathcal{P}_\infty)^c\cap B_{r+\ve}(\mathcal{P}_\infty)$ 
if and only if $\Lambda_x\in(r,r+\ve]$ and $\Lambda_y>r$ for every $y\in \llbracket x,\infty\llbracket$.
Recalling our notation $\mathbf{V}$ for the volume measure on $\t_\infty$, we can thus write
$$|B_r^\bullet(\mathcal{P}_\infty)^c\cap B_{r+\ve}(\mathcal{P}_\infty)|
= \sum_{i\in I\cup J: t_i>L_r}
\mathbf{V}(\{x\in \t^i: \Lambda_x\leq r+\ve\hbox{ and } \Lambda_y>r,\;\forall y\in \llbracket \rho_i,x\rrbracket
\}).$$
We will first deal with indices $i$ such that 
$t_i>L_{r+\ve}$, and we set
$$A_\ve:=\sum_{i\in I\cup J: t_i>L_{r+\ve}}
\mathbf{V}(\{x\in \t^i: \Lambda_x\leq r+\ve\hbox{ and } \Lambda_y>r,\;\forall y\in \llbracket \rho_i,x\rrbracket
\})$$
to simplify notation. Recall that if $x\in \t^i$ and $x=p_{\zeta^i}(s)$, we have $\Lambda_x=\wh\omega^i_s$
and $\{\Lambda_y:y\in \llbracket \rho_i,x\rrbracket\}=\{\omega^i_s(t):0\leq t\leq \zeta^i_s\}$. 
An application of the special
Markov property shows that the conditional distribution of 
$A_\ve$
knowing $Z_{r+\ve}$ is the law of $U_\ve(Z_{r+\ve})$, where $U_\ve$ is a subordinator whose 
L\'evy measure is the ``law'' of
$$\int_0^\sigma \rd s\,\mathbf{1}_{\{\wh W_s \leq r+\ve;\; W_s(t)>r,\;\forall t\in[0,\zeta_s]\}},$$
under $\N_{r+\ve}$ (and $U_\ve$ is assumed to be independent of $Z_{r+\ve}$). From the first
moment formula for the Brownian snake \cite[Proposition IV.2]{LGZ}, one easily derives that
$$\N_{r+\ve}\Big(\int_0^\sigma \rd s\,
\mathbf{1}_{\{\wh W_s \leq r+\ve;\; W_s(t)>r,\;\forall t\in[0,\zeta_s]\}}\Big)=
E_{r+\ve}\Big[\int_0^\infty \rd t\,\mathbf{1}_{\{B_t\leq r+\ve\}}\,\mathbf{1}_{\{t<\gamma_r\}}\Big] = \ve^2,$$
where we have used the notation of the proof of Lemma \ref{Laplace2}. 
On the other hand, scaling arguments show that
$$(U_\ve(t))_{t\geq 0}\build{=}_{}^{\rm(d)} (\ve^4\, U_1(\frac{t}{\ve^2}))_{r\geq 0},$$
and the law of large numbers implies that $t^{-1}U_1(t)$ converges a.s. to $1$ as $t\to\infty$. 
Since the conditional distribution of $\ve^{-2} A_\ve$ knowing $Z_{r+\ve}$
is the law of $\ve^{2}U_1(\frac{Z_{r+\ve}}{\ve^2})$, it follows from the preceding observations
that 
$$\ve^{-2} A_\ve - Z_{r+\ve} \build{\la}_{\ve \to 0}^{} 0$$
in probability. Since $Z_{r+\ve}$ converges to $Z_r$ as $\ve\to 0$, we conclude that
$$\ve^{-2} A_\ve  \build{\la}_{\ve \to 0}^{} Z_r$$
in probability. To complete the proof, we just have to check that
$$\ve^{-2} \sum_{i\in I\cup J: L_r<t_i\leq L_{r+\ve}}
\mathbf{V}(\{x\in \t^i: \Lambda_x\leq r+\ve\hbox{ and } \Lambda_y>r,\;\forall y\in \llbracket \rho_i,x\rrbracket
\}) \build{\la}_{\ve \to 0}^{} 0$$
in probability. We leave the easy verification to the reader. 
\hfill$\square$

\subsection{The law of the volume of the hull}
\label{volu-hull}

This subsection is devoted to the proof of Theorem \ref{laws-hull}.
We recall our notation 
$B^\bullet_a(\pp_\infty)$ for the hull of radius $a$ in the Brownian plane $\pp_\infty$.
To simplify notation we write $B^\bullet_a$ instead of $B^\bullet_a(\pp_\infty)$, and we also write
$|B^\bullet_a|$ for the volume of this hull. Recall that
$Z_a$ is interpreted as a generalized length of the boundary of $B^\bullet_a$.

Thanks to the construction of the Brownian plane explained in
subsection \ref{newrep} and to formula \eqref{formula-hull}, we can express the volume $|B^\bullet_a|$ as the sum of two
independent contributions:
\begin{enumerate}
\item[$\bullet$] The total volume of those subtrees that branch off the spine
below level $L_a$.
\item[$\bullet$] The contribution of the subtrees that branch off the spine
above level $L_a$. More precisely, we need to sum, over all indices
$i\in I\cup J$ with $t_i>L_a$, the Lebesgue measure of the set of all $s\in[0,\sigma_i]$ such that
the path $\omega^i_s$ hits level $a$. Via an application of the special Markov property, the
conditional distribution of this contribution given $Z_a$ will follow from formula \eqref{Delma}
below.
\end{enumerate}

The beginning of this subsection is devoted to calculating the Laplace transform of
the first of these two contributions. Thanks to Theorem \ref{decotheo}, this 
is also the Laplace transform of $\sigma$ under the conditional
probability measure $\N_a(\cdot\mid W_*=0)$. This motivates the
following calculations. 

We recall the notation 
$\z_0$ for the (total mass of the) exit measure from $(0,\infty)$, and we also set
$$\y_0:=\int_0^\sigma \rd s\,\ind{\{\tau_0(W_s)=\infty\}},$$
where we recall that $\tau_0(\w)=\inf\{t\geq 0:\w(t)\notin(0,\infty)\}$. 
Our first goal is to compute, for every $\lambda,\mu> 0$, the fonction 
$u_{\lambda,\mu}(x)$ defined for every $x>0$ by
$$u_{\lambda,\mu}(x)= \N_x(1-\exp(-\lambda\z_0 -\mu \y_0)).$$
Note that $u_{\lambda,0}(x)$ is given by formula \eqref{Laplaceexit}. On the other
hand, the limit of $u_{\lambda,\mu}$ as $\lambda\uparrow\infty$ is
\begin{equation}
\label{Delma}
u_{\infty,\mu}(x):= \N_x(1-\ind{\{\r\subset(0,\infty)\}}\exp( -\mu \y_0))
= \sqrt{\frac{\mu}{2}}\Big(3 \coth^2((2\mu)^{1/4}x) -2\Big)
\end{equation}
by \cite[Lemma 7]{Del}. The latter formula is generalized in the next lemma. 

\begin{lemma}
\label{joint-Laplace}
We have, for every $x>0$:
\begin{enumerate}
\item[$\bullet$] if $\lambda > \sqrt{\frac{\mu}{2}}$,
$$u_{\lambda,\mu}(x)=
\sqrt{\frac{\mu}{2}}\Bigg( 3\Bigg(\coth\Bigg((2\mu)^{1/4} x +\coth^{-1}\sqrt{\frac{2}{3} +\frac{1}{3}
\sqrt{\frac{2}{\mu}}\lambda}\Bigg)\Bigg)^2 -2 \Bigg);$$
\item[$\bullet$] if $\lambda < \sqrt{\frac{\mu}{2}}$,
$$u_{\lambda,\mu}(x)=
\sqrt{\frac{\mu}{2}}\Bigg( 3\Bigg(\tanh\Bigg((2\mu)^{1/4} x +\tanh^{-1}\sqrt{\frac{2}{3} +\frac{1}{3}
\sqrt{\frac{2}{\mu}}\lambda}\Bigg)\Bigg)^2 -2 \Bigg).$$
\end{enumerate}
\end{lemma}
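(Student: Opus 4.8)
The plan is to derive $u_{\lambda,\mu}$ as the solution of a semilinear ODE with appropriate boundary data, exactly as sketched for \eqref{Laplaceexit} but now with the extra functional $\y_0$ in the exponent. First I would recall that $\z_0$ is the exit measure from $(0,\infty)$ and that the additive functional $\y_0=\int_0^\sigma \rd s\,\ind{\{\tau_0(W_s)=\infty\}}$ is the Lebesgue measure of the set of snake paths that never exit $(0,\infty)$. By the general connection between the Brownian snake and semilinear PDE (see \cite[Chapter V]{LGZ}), the function $u_{\lambda,\mu}(x)=\N_x(1-\exp(-\lambda\z_0-\mu\y_0))$, defined for $x>0$, is the (minimal nonnegative) solution on $(0,\infty)$ of
$$u'' = 4u^2 - 2\mu,$$
where the $-2\mu$ term comes from the potential term $\mu\ind{\{\cdot\in(0,\infty)\}}$ associated with $\y_0$ (the factor $4$ and the relation to $\psi_0(u)=2u^2$ are as in the standard normalization used throughout the paper). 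The boundary conditions are $u_{\lambda,\mu}(0+)=\lambda$, coming from the exit-measure weight $\lambda\z_0$, together with the requirement that $u_{\lambda,\mu}$ stay bounded (in fact converge to the constant equilibrium $\sqrt{\mu/2}$, the positive root of $4u^2=2\mu$) as $x\to\infty$. I would justify the boundary condition at $0$ by the same special Markov / branching argument used to pin down $u(a)=\mu$ in the discussion after \eqref{Laplaceexit}, and the behaviour at infinity by monotonicity and the finiteness $\N_x(\z^D\neq 0)<\infty$.

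Next I would integrate the ODE. Multiplying $u''=4u^2-2\mu$ by $u'$ and integrating gives the first integral
$$(u')^2 = \tfrac{8}{3}u^3 - 4\mu u + C.$$
However, rather than wrestle with this cubic, the cleaner route is to note that $4u^2-2\mu = 4\bigl(u-\sqrt{\mu/2}\,\bigr)\bigl(u+\sqrt{\mu/2}\,\bigr)$ and to look for solutions of the form $u(x)=\sqrt{\mu/2}\,\bigl(3\,g((2\mu)^{1/4}x+c)^2-2\bigr)$ where $g$ is either $\coth$ or $\tanh$; this ansatz is dictated by the already-known special case \eqref{Delma} (Delmas's Lemma 7), which is precisely the $\lambda=\infty$ case and corresponds to $g=\coth$, $c=0$. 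Substituting this ansatz and using the identities $\coth'=1-\coth^2$, $\coth''=2\coth(\coth^2-1)$ (resp. $\tanh'=1-\tanh^2$, $\tanh''=-2\tanh(1-\tanh^2)$), one checks directly that $u''=4u^2-2\mu$ holds for any shift $c$; the nontrivial verification is just an algebraic identity in $g$. Then the constant $c$ is fixed by the boundary value at $x=0$: setting $\sqrt{\mu/2}\,(3g(c)^2-2)=\lambda$ gives $g(c)^2=\frac{2}{3}+\frac{1}{3}\sqrt{2/\mu}\,\lambda$, i.e. $g(c)=\sqrt{\frac{2}{3}+\frac{1}{3}\sqrt{2/\mu}\,\lambda}$, hence $c=\coth^{-1}(\cdots)$ or $c=\tanh^{-1}(\cdots)$ as stated. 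The dichotomy $\lambda\gtrless\sqrt{\mu/2}$ is exactly the dichotomy of whether the argument of the inverse hyperbolic function exceeds or is less than $1$: when $\lambda>\sqrt{\mu/2}$ we have $g(c)^2>1$ forcing $g=\coth$, and when $\lambda<\sqrt{\mu/2}$ we have $g(c)^2<1$ forcing $g=\tanh$; in both cases the constructed $u$ decreases monotonically to the equilibrium $\sqrt{\mu/2}$ as $x\to\infty$, which is the correct asymptotic behaviour, and one invokes a standard uniqueness statement for the relevant boundary value problem (e.g.\ via \cite[Chapter V]{LGZ}, or elementarily by a maximum-principle / phase-plane argument on $u''=4u^2-2\mu$) to conclude that this $u$ is $u_{\lambda,\mu}$.

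The main obstacle I anticipate is not the PDE computation itself, which is routine once the ansatz is in hand, but rigorously pinning down that $u_{\lambda,\mu}$ really does solve $u''=4u^2-2\mu$ with the stated boundary data and that it is the \emph{unique} relevant solution. Concretely: (i) one must identify the correct potential/nonlinearity pair, which requires care with the normalization constants that relate $\N_x$, the branching mechanism $\psi_0(u)=2u^2$, and the factor $4$ appearing in \eqref{Laplaceexit} — I would handle this by checking consistency with the two known special cases $u_{\lambda,0}$ (formula \eqref{Laplaceexit}) and $u_{\infty,\mu}$ (formula \eqref{Delma}); (ii) one must argue that $u_{\lambda,\mu}$ is finite and $C^2$ on $(0,\infty)$ and extracts the boundary value $\lambda$ at $0+$, for which the branching-property / special Markov argument of subsection \ref{prelisnake} is the right tool; (iii) the selection of the minimal (or appropriately bounded) solution, excluding the blow-up branch of the cubic first integral, needs the behaviour at $+\infty$, which follows from $0\le u_{\lambda,\mu}(x)\le u_{\infty,\mu}(x)\to\sqrt{\mu/2}$. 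Once these three points are settled, matching the shift constant to the boundary value and reading off the two cases is immediate, and the lemma follows.
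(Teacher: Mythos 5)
Your proposal is correct and follows essentially the same route as the paper: identify $u_{\lambda,\mu}$ as the solution of $\tfrac12 u''=2u^2-\mu$ on $(0,\infty)$ with $u(0)=\lambda$ via the Dynkin/Brownian-snake connection, pin down the behaviour $\sqrt{\mu/2}$ at infinity, and solve the ODE (the paper verifies the limit at infinity directly as $\N_0(1-e^{-\mu\sigma})=\sqrt{\mu/2}$, whereas you obtain boundedness from $u_{\lambda,\mu}\le u_{\infty,\mu}$ and a phase-plane selection, which is equally valid). Your explicit $\coth/\tanh$ ansatz correctly carries out the ``tedious but straightforward calculations'' the paper leaves to the reader, and the consistency checks against \eqref{Laplaceexit} and \eqref{Delma} are exactly the right safeguards on the normalization constants.
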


\rem If $\lambda = \sqrt{\frac{\mu}{2}}$, we have simply
$$u_{\lambda,\mu}(x)=\sqrt{\frac{\mu}{2}}.$$
This can be obtained by a passage to the limit from the previous formulas, but a direct
proof is also easy. 

\begin{proof}
By results due to Dynkin, the function $u_{\lambda,\mu}$ solves the differential equation
\begin{equation}
\label{equadiff}
\left\{\begin{array}{ll}
\frac{1}{2} u'' = 2u^2 -\mu\;,\quad&\hbox{on }(0,\infty),\\
\noalign{\smallskip}
u(0)=\lambda.&
\end{array}
\right.
\end{equation}
This is indeed a very special case of Theorem 3.1 in \cite{Dyn}. For the reader
who is unfamiliar with the general theory of superprocesses, a direct proof 
can be given along the lines of the proof of Lemma 6 in \cite{Del}. 

It is also easy to verify that
$$\lim_{x\to\infty} u_{\lambda,\mu}(x)= \N_0(1-e^{-\mu \sigma}) = \sqrt{\frac{\mu}{2}}.$$
The formulas of the lemma then follow by solving equation \eqref{equadiff}, which requires some tedious
but straightforward calculations.
\end{proof}

For future reference, we note that, if $\lambda > \sqrt{\frac{\mu}{2}}$, we have, for every $x>0$,
\begin{equation}
\label{flow1}
u_{\lambda,\mu}(x)=u_{\infty,\mu}(x+\theta_\mu(\lambda)),
\end{equation}
where the function $\theta_\mu$, which is defined on $( \sqrt{\frac{\mu}{2}},\infty)$ by
$$\theta_\mu(\lambda) = (2\mu)^{-1/4}\coth^{-1}\sqrt{\frac{2}{3} +\frac{1}{3}
\sqrt{\frac{2}{\mu}}\lambda},$$
is the functional inverse of $u_{\infty,\mu}$. Of course \eqref{flow1} is nothing but the flow property
of solutions of \eqref{equadiff}.

\begin{proposition}
\label{timebelow0}
Let $a>0$. Then, for every $\mu>0$,
$$\N_a\Big(e^{-\mu \y_0}\,\Big|\, W_*=0\Big)=
-\frac{1}{3}\,a^3\,u'_{\infty,\mu}(a)= a^3(2\mu)^{3/4} \,\frac{\cosh((2\mu)^{1/4}a)}{\sinh^3((2\mu)^{1/4}a)}.
$$
\end{proposition}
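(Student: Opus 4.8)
The plan is to compute the conditional Laplace transform $\N_a(e^{-\mu\y_0}\mid W_*=0)$ by disintegrating $\N_a$ according to the value of $W_*$ and then differentiating at the relevant point. The starting observation is that, for $\lambda>0$, the quantity $\N_a(1-\ind{\{\r\subset(0,\infty)\}}e^{-\mu\y_0}) = u_{\infty,\mu}(a)$ from \eqref{Delma} records the contribution of excursions whose range stays in $(0,\infty)$, i.e. those with $W_*>0$, together with a $\mu$-weighting by $\y_0$; but since $\y_0$ is the Lebesgue measure of the set of times whose path never exits $(0,\infty)$, on the event $\{\r\subset(0,\infty)\}$ we simply have $\y_0=\sigma$. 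The idea is to recover the conditional expectation given $W_*=0$ by a derivative in the boundary level. Concretely, I would use the scaling/translation invariance of the Brownian snake to write, for $y>0$,
$$\N_a(\ind{\{\r\subset(y,\infty)\}}e^{-\mu\y_0^{(y)}}) = \N_{a-y}(\ind{\{\r\subset(0,\infty)\}}e^{-\mu\y_0}),$$
where $\y_0^{(y)}$ is the analogue of $\y_0$ with $(0,\infty)$ replaced by $(y,\infty)$ — but since $\y_0^{(y)}$ counts time spent above level $y$ and on $\{\r\subset(y,\infty)\}$ all paths survive forever, this equals $\sigma$ again. Differentiating the identity $\N_a(W_*>y, e^{-\mu\sigma}) = 1 - \sqrt{\mu/2} - u_{\infty,\mu}(a-y)$ (using $\lim_{x\to\infty}u_{\infty,\mu}(x)=\sqrt{\mu/2}$, established in the proof of Lemma \ref{joint-Laplace}) with respect to $y$ and setting $y=0$ gives a formula for the "density at $W_*=0$" of $\N_a(\cdot\, ; e^{-\mu\sigma})$, namely $u'_{\infty,\mu}(a)$ (up to sign).

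Next I would normalize. The law of $W_*$ under $\N_a$ has a density obtained from \eqref{hittingzero}: $\N_a(W_*\le -z)=\frac{3}{2(a+z)^2}$ for $z>-a$, hence $\N_a(W_*\in dy) = \frac{3}{|y|^3}\,dy$ on $(-\infty,a)$ — wait, more carefully, $\N_a(W_*<y) = \frac{3}{2(a-y)^2}$ for $y<a$, so the density is $\frac{d}{dy}\frac{3}{2(a-y)^2} = \frac{3}{(a-y)^3}$, which at $y=0$ equals $3/a^3$. Therefore the conditional expectation is the ratio
$$\N_a(e^{-\mu\y_0}\mid W_*=0) = \frac{-u'_{\infty,\mu}(a)}{3/a^3} = -\frac{a^3}{3}\,u'_{\infty,\mu}(a),$$
where the minus sign appears because $y\mapsto u_{\infty,\mu}(a-y)$ increases in $y$ while the contribution to $\{W_*>y\}$ decreases; I would track the sign carefully using that $u_{\infty,\mu}$ is decreasing in its argument (it is the exit-type quantity and vanishes... actually $u_{\infty,\mu}(x)\downarrow\sqrt{\mu/2}$ as $x\uparrow\infty$, and $u_{\infty,\mu}(x)\uparrow+\infty$ as $x\downarrow0$, so $u'_{\infty,\mu}<0$ and $-\frac13 a^3 u'_{\infty,\mu}(a)>0$ as it must be). Finally, I would substitute the explicit expression $u_{\infty,\mu}(x)=\sqrt{\mu/2}\,(3\coth^2((2\mu)^{1/4}x)-2)$ from \eqref{Delma} and differentiate: using $\frac{d}{dx}\coth(cx) = -c/\sinh^2(cx)$ with $c=(2\mu)^{1/4}$, one gets $u'_{\infty,\mu}(x) = \sqrt{\mu/2}\cdot 6\coth((2\mu)^{1/4}x)\cdot(-(2\mu)^{1/4}/\sinh^2((2\mu)^{1/4}x)) = -6(2\mu)^{1/4}\sqrt{\mu/2}\,\frac{\cosh((2\mu)^{1/4}x)}{\sinh^3((2\mu)^{1/4}x)}$, and since $6(2\mu)^{1/4}\sqrt{\mu/2} = 6(2\mu)^{1/4}(2\mu)^{1/2}/2 = 3(2\mu)^{3/4}$, we obtain $-\frac13 a^3 u'_{\infty,\mu}(a) = a^3(2\mu)^{3/4}\frac{\cosh((2\mu)^{1/4}a)}{\sinh^3((2\mu)^{1/4}a)}$, matching the claim.

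The main obstacle I anticipate is making the "differentiate in the boundary level" step rigorous: one must justify that $\N_a(W_*\in dy\, ;\, e^{-\mu\sigma})$ has a density that is genuinely continuous at $y=0$ and that the disintegration of $\N_a$ with respect to $W_*$ behaves well enough that dividing the two densities yields the conditional expectation $\N_a(\cdot\mid W_*=0)$. This is where I would invoke Theorem \ref{decotheo}, which provides the precise decomposition of the Brownian snake under $\N_0$ conditionally on $W_*$; by translation it gives the decomposition under $\N_a$ conditionally on $W_*=0$ directly, and from part (i) (the nine-dimensional Bessel description of the minimizing path, stopped at its last passage at level $a$) together with part (ii) (the Poisson structure of the subtrees with intensity $2\ind{[0,\zeta_{s_\bm}]}(t)\ind{\{\omega_*>\widehat W_{s_\bm}\}}\,dt\,\N_{W_{s_\bm}(t)}(d\omega)$) one can compute $\N_a(e^{-\mu\sigma}\mid W_*=0)$ as an exponential functional of the Bessel path, reducing the problem to a (one-dimensional) Bessel computation analogous to those in Lemma \ref{Laplace2}. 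Either route works; I would present the cleaner one, likely the direct $h$-transform / disintegration argument, and fall back on the Bessel representation only to justify regularity of the conditional law.
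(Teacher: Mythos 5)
Your proposal reaches the correct formula and its skeleton matches the paper's: both compute the ratio of the first--order asymptotics of $\N_a(e^{-\mu\y_0}\ind{\{-\ve<W_*\le 0\}})$ and of $\N_a(-\ve<W_*\le 0)\sim 3\ve/a^3$, and both reduce the numerator to the increment $u_{\infty,\mu}(a)-u_{\infty,\mu}(a+\ve)\sim -u'_{\infty,\mu}(a)\,\ve$. The middle step, however, is genuinely different. The paper keeps $\y_0$ throughout, writes $\{ -\ve<W_*\le 0\}=\{\z_0>0,\,W_*>-\ve\}$, applies the special Markov property at level $0$ to convert the constraint $W_*>-\ve$ into the weight $e^{-\alpha\z_0}$ with $\alpha=3/(2\ve^2)$, and then identifies $u_{\infty,\mu}(a)-u_{\alpha,\mu}(a)=u_{\infty,\mu}(a)-u_{\infty,\mu}(a+\theta_\mu(\alpha))$ via the flow property \eqref{flow1}, with $\theta_\mu(\alpha)\sim\ve$. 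You instead replace $\y_0$ by $\sigma$ and use translation invariance, which gives the increment exactly rather than through $\theta_\mu$. This is a legitimate and arguably cleaner route, but it requires one justification you gloss over: the proposition concerns $\y_0$, not $\sigma$, and on $\{-\ve<W_*\le 0\}$ with $\ve>0$ these differ. You must either argue that $\N_a((\sigma-\y_0)\ind{\{-\ve<W_*\le 0\}})=o(\ve)$ (doable via the special Markov property, which brings back the tool you were avoiding), or invoke the fact that the conditional law given $W_*=0$ -- as provided by Theorem \ref{decotheo} -- is carried by $\{\y_0=\sigma\}$, so that the two conditional Laplace transforms coincide. One sentence suffices, but it must be said.

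Separately, the displayed identity $\N_a(W_*>y,\,e^{-\mu\sigma})=1-\sqrt{\mu/2}-u_{\infty,\mu}(a-y)$ is wrong as written: the left-hand side is infinite, since $\N_a$ is an infinite measure and the short excursions (which all satisfy $W_*>y$ and have $e^{-\mu\sigma}$ close to $1$) contribute infinite mass. The correct finite statements are $\N_a\bigl(1-\ind{\{W_*>y\}}e^{-\mu\sigma}\bigr)=u_{\infty,\mu}(a-y)$ and hence $\N_a\bigl(\ind{\{-\ve<W_*\le 0\}}e^{-\mu\sigma}\bigr)=u_{\infty,\mu}(a)-u_{\infty,\mu}(a+\ve)$; working with this difference also sidesteps the sign bookkeeping you were worried about. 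With these two repairs the argument is sound, and your final differentiation of $u_{\infty,\mu}$ and the normalization by the density $3/a^3$ of $W_*$ at $0$ are both correct.
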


\rem The conditioning on $\{W_*=0\}$ may be understood as a limit as $\ve \to 0$ of
conditioning on $\{-\ve<W_*\leq 0\}$. Equivalently, we may use Theorem \ref{decotheo},
which  provides an explicit description of the conditional probabilities $\N_0(\cdot\mid W_*=y)$
for every $y<0$. We also note that under the conditioning $\{W_*=0\}$ we have
$\y_0=\sigma$. 

\begin{proof} We first observe that, for every $\ve>0$,
\begin{equation}
\label{below1}
\N_a(-\ve<W_*\leq 0)=\frac{3}{2a^2}-\frac{3}{2(a+\ve)^2} \build{\sim}_{\ve \to0}^{} \frac{3\ve}{a^3},
\end{equation}
by \eqref{hittingzero}. On the other hand,
\begin{align*}\N_a\Big(e^{-\mu \y_0}\,\ind{\{-\ve<W_*\leq 0\}}\Big)
&=\N_a\Big(e^{-\mu \y_0}\,\ind{\{\z_0>0,W_*>-\ve\}}\Big)\\
&=\N_a\Big(e^{-\mu \y_0}\,\ind{\{\z_0>0\}}\,\exp(-\z_0\,\N_0(W_*\leq -\ve))\Big)\\
&=\N_a\Big(\exp\Big(-\mu\y_0 - \frac{3}{2\ve^2}\z_0\Big)\;\ind{\{\z_0>0\}}\Big)
\end{align*}
using the special Markov property in the second equality, and then \eqref{hittingzero}. Set 
$\alpha=\frac{3}{2\ve^2}$ to simplify notation. Then,
\begin{align*}
\N_a\Big(\exp\Big(-\mu\y_0 - \alpha \z_0\Big)\ind{\{\z_0>0\}}\Big)
&=\N_a\Big(1-e^{-\mu\y_0}\ind{\{\z_0=0\}}\Big)-\N_a\Big(1-e^{-\mu\y_0-\alpha\z_0}\Big)\\
&=u_{\infty,\mu}(a)-u_{\alpha,\mu}(a)\\
&=u_{\infty,\mu}(a)-u_{\infty,\mu}(a+\theta_\mu(\alpha))\\
&\build{\sim}_{\alpha\to\infty}^{} -\theta_\mu(\alpha)\,u'_{\infty,\mu}(a),
\end{align*}
using \eqref{flow1} in the last equality. Since
$$\theta_\mu(\alpha)\build{\sim}_{\alpha\to \infty}^{} \sqrt{\frac{3}{2\alpha}},$$
it follows from the preceding discussion that
$$\N_a\Big(e^{-\mu \y_0}\,\ind{\{-\ve<W_*\leq 0\}}\Big)\build{\sim}_{\ve\to 0}^{} - u'_{\infty,\mu}(a)\,\ve.$$
The result of the proposition follows using also \eqref{below1}. 
\end{proof}

We state the next result in terms of super-Brownian motion, although our main motivation comes from
our application to the Brownian plane in Theorem \ref{laws-hull}. Recall that, in order to use the connection
with the Brownian snake, we always assume that the branching mechanism of super-Brownian
motion is $\psi_0(u)=2u^2$.

\begin{corollary}
\label{supercoro}
Let $a>0$ and $r>0$. Assume that $(\x_t)_{t\geq 0}$ is a super-Brownian motion that
starts from $r\delta_a$ under the probability measure $\P_{r\delta_a}$. Set
$$\Sigma=\int_0^\infty \rd t\,\langle \x_t,1 \rangle,$$
and write $\r^{\x}$ for the range of $\x$. Then, for every $\mu>0$,
\begin{align*}
&\E_{r\delta_a}\Big[e^{-\mu \Sigma}\,\Big|\,\min\r^\x=0\Big]\\
&\qquad= a^3(2\mu)^{3/4} \,\frac{\cosh((2\mu)^{1/4}a)}{\sinh^3((2\mu)^{1/4}a)}
\,\exp\Big(-r\Big(\sqrt{\frac{\mu}{2}}\Big(3 \coth^2((2\mu)^{1/4}a) -2\Big)-\frac{3}{2a^2}\Big)\Big).
\end{align*}
\end{corollary}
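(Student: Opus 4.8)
The plan is to translate the super-Brownian motion statement into the language of the Brownian snake and then reduce it, via the special Markov property and an independence decomposition, to Proposition \ref{timebelow0}. Using the construction recalled in subsection \ref{prelisnake} (formulas \eqref{SBMsnake} and \eqref{historicalsnake}), we may realize $(\x_t)_{t\geq 0}$ started from $r\delta_a$ from a Poisson measure $\mathcal N=\sum_{k\in K}\delta_{\omega_{(k)}}$ with intensity $r\,\N_a(\rd\omega)$. Then $\Sigma=\sum_{k\in K}\sigma(\omega_{(k)})$ is the total mass, $\min\r^\x=\min_{k\in K}\omega_{(k),*}$, and the event $\{\min\r^\x=0\}$ should be read as a limit of $\{-\ve<\min\r^\x\le 0\}$ exactly as in the remark following Proposition \ref{timebelow0}. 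By elementary properties of Poisson measures, conditioning on $\{-\ve<\min\r^\x\le 0\}$ singles out exactly one ``special'' excursion $\omega_{(k_0)}$ that attains the overall minimum (distributed, in the limit, according to $\N_a(\cdot\mid W_*=0)$), while the remaining excursions form, independently, a Poisson measure with intensity $r\,\N_a(\cdot\cap\{\r\subset(0,\infty)\})$ — precisely because the points falling into $\{W_*\le-\ve\}$ are removed in the limit.

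The computation then splits as a product of two independent factors. The special excursion contributes $\N_a(e^{-\mu\y_0}\mid W_*=0)$, which is exactly Proposition \ref{timebelow0}, giving the prefactor $a^3(2\mu)^{3/4}\cosh((2\mu)^{1/4}a)/\sinh^3((2\mu)^{1/4}a)$; here I use that under $\{W_*=0\}$ one has $\y_0=\sigma$, so this excursion's mass contribution is governed by $\y_0$. The ``bulk'' excursions contribute, by the exponential formula for Poisson measures,
$$\exp\Big(-r\,\N_a\big(\ind{\{\r\subset(0,\infty)\}}(1-e^{-\mu\sigma})\big)\Big).$$
On the event $\{\r\subset(0,\infty)\}$ we again have $\sigma=\y_0$, so $\N_a(\ind{\{\r\subset(0,\infty)\}}(1-e^{-\mu\sigma}))=u_{\infty,\mu}(a)-\tfrac{3}{2a^2}$ by \eqref{Delma} and \eqref{hittingzero} (the $\tfrac{3}{2a^2}$ coming from $\N_a(\r\not\subset(0,\infty))$, i.e. from subtracting the measure of the event that the range hits $0$, on which $1-e^{-\mu\sigma}$ should be replaced by $1$). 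Plugging in $u_{\infty,\mu}(a)=\sqrt{\mu/2}\,(3\coth^2((2\mu)^{1/4}a)-2)$ yields the exponential factor in the statement.

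The one genuine subtlety — the ``hard part'' — is making rigorous the limiting procedure $\{-\ve<\min\r^\x\le 0\}\downarrow\{\min\r^\x=0\}$ and verifying that, in this limit, the conditional law of the configuration really does decouple into (the law $\N_a(\cdot\mid W_*=0)$ for the minimizing excursion) $\times$ (an independent Poisson measure with intensity $r\,\N_a(\cdot\cap\{\r\subset(0,\infty)\})$). This is a routine but slightly delicate Poissonian last-exit argument: one writes $\N_a(-\ve<\min\r^\x\le 0)$ explicitly using \eqref{hittingzero}, checks it is $\sim 3r\ve/a^3$, and matches the joint Laplace transform computed after conditioning on $\{-\ve<W_*\le 0\}$ against the product of the two claimed factors as $\ve\downarrow 0$, exactly along the lines of the proof of Proposition \ref{timebelow0}. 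Once this decoupling is in place, the identity follows by simply multiplying the two factors identified above.
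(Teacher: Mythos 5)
Your proposal is correct and follows essentially the same route as the paper: realize $\x$ from a Poisson measure with intensity $r\,\N_a$, decouple (via the conditioning $\{-\ve<\min\r^\x\le 0\}\downarrow\{\min\r^\x=0\}$) the minimizing excursion, whose contribution is given by Proposition \ref{timebelow0}, from the remaining excursions, whose contribution $\exp(-r\,\N_a(\ind{\{\r\subset(0,\infty)\}}(1-e^{-\mu\sigma})))=\exp(-r(u_{\infty,\mu}(a)-\tfrac{3}{2a^2}))$ follows from the exponential formula, \eqref{Delma} and \eqref{hittingzero}. The paper compresses the Poissonian decoupling into one line (``properties of Poisson measures lead to the formula''), whereas you spell out the limiting argument; the computations are identical.
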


\begin{proof}
We may assume that  $(\x_t)_{t\geq 0}$ is constructed from
a Poisson point measure $\n$ with intensity $r\N_a$ via formula \eqref{SBMsnake}. Then,
we immediately verify that
$$\Sigma = \int \n(\rd \omega)\,\sigma(\omega)$$
and properties of Poisson measures lead to the formula
$$\E_{r\delta_a}\Big[e^{-\mu \Sigma}\,\Big|\,\min\r^\x=0\Big]
=\N_a\Big(e^{-\mu \sigma}\Big|\,\min\r=0\Big)\,\exp\Big(-r\N_a\Big((1-e^{-\mu\sigma})\ind{\{\min\r>0\}}\Big)\Big).$$
The first term in the right-hand side is given by Lemma \ref{timebelow0}. As for the second term
we observe that
$$\N_a\Big((1-e^{-\mu\sigma})\ind{\{\min\r>0\}}\Big)
= \N_a\Big(1-e^{-\mu\sigma}\ind{\{\min\r>0\}}\Big)-\N_a(\min\r\leq 0) = u_{\infty,\mu}(a)-\frac{3}{2a^2}.$$
This completes the proof.
 \end{proof}

\noindent{\it Proof of Theorem \ref{laws-hull}}. The first formula of the theorem is a straightforward consequence of the second one since
we know the distribution of $Z_a$. More precisely, using Proposition \ref{process-exit} (ii), we observe that
\begin{align*}
&E\Big[\exp\Big(-Z_a\Big(\sqrt{\frac{\mu}{2}}\Big(3 \coth^2((2\mu)^{1/4}a) -2\Big)-
\frac{3}{2a^2}\Big)\Big)\Big]\\
&\qquad = \Big(1 + \frac{2a^2}{3} \Big(\sqrt{\frac{\mu}{2}}\Big(3 \coth^2((2\mu)^{1/4}a) -2\Big)-
\frac{3}{2a^2}\Big)\Big)^{-3/2}\\
&\qquad = 3^{3/2}a^{-3} (2\mu)^{-3/4}\,\Big(3 \coth^2((2\mu)^{1/4}a) -2\Big)^{-3/2}.
\end{align*}
If we multiply this quantity by
$$a^3(2\mu)^{3/4} \,\frac{\cosh((2\mu)^{1/4}a)}{\sinh^3((2\mu)^{1/4}a)}$$
we get the desired formula for $E[\exp(-\mu|B^\bullet_a|)]$.

Not suprisingly, the second formula of Theorem \ref{laws-hull} is a consequence of the analogous formula 
in Corollary \ref{supercoro}. Let us explain this. Using our representation of the 
Brownian plane, and formula \eqref{formula-hull},
we can write $|B^\bullet_a|$ as the sum of two independent contributions:
\begin{enumerate}
\item[$\bullet$] The contribution of subtrees branching off the spine at a level
smaller than $L_a$. Using Theorem
\ref{decotheo}, we see
that this contribution is distributed as $\sigma$ under the conditional
probability measure $\N_a(\cdot\mid W_*=0)$. We also note that this contribution
is independent of the $\sigma$-field $\g_a$.
\item[$\bullet$] The contribution of subtrees branching off the spine at a level
greater than $L_a$. This contribution
is $\g_a$-measurable. Furthermore, an application of
the special Markov property (similar to the one in the proof of Proposition \ref{condi-Markov})
shows that its conditional distribution given $Z_a=r$ is the law of
$$\sum_{k\in K} \sigma(\omega_{(k)})$$
where $\sum_{k\in K} \delta_{\omega_{(k)}}$ is a Poisson measure with intensity $r\N_a(\cdot\cap\{W_*>0\})$.
\end{enumerate}

\noindent The preceding discussion shows that the conditional distribution of $|B^\bullet_a|$ given
$Z_a=r$ coincides with the distribution of $\Sigma$ under $\P_{r\delta_a}(\cdot\mid\min\mathcal{R}^X=0)$,
with the notation of Corollary \ref{supercoro}. This completes the proof.
\hfill$\square$

\subsection{The process of hull volumes}
\label{pro-hull}

Our goal in this subsection is to prove Theorem \ref{hull-process-description}.
In a way similar to Corollary \ref{supercoro}, we consider a super-Brownian
motion $(\x_t)_{t\geq 0}$, and the probability mesure $\P_{r\delta_0}$
under which this super-Brownian motion starts from $r\delta_0$. We also introduce the
associated historical process $(\mathbf{X}_t)_{t\geq 0}$. As previously, we may and will assume that $(\x_t)_{t\geq 0}$ and $(\mathbf{X}_t)_{t\geq 0}$ are
constructed from a Poisson measure
$$\n=\sum_{k\in K} \delta_{\omega_{(k)}}$$
with intensity $r\N_0$, via formulas \eqref{SBMsnake} and \eqref{historicalsnake}. We then set, for every $a<0$,
$$\mathscr{Z}_{a}=\sum_{k\in K} \z_a(\omega_{(k)})$$
and, for every $a\leq 0$,
$$\mathscr{Y}_{a}=\sum_{k\in K} \y_a(\omega_{(k)})$$
where
$$\y_a(\omega):=\int_0^\sigma \rd s\,\ind{\{\tau_a(W_s(\omega))=\infty\}}.$$
We also set 
$\mathscr{Z}_{0}=r$ by convention.

In the theory of superprocesses \cite{Dyn}, 
$\mathscr{Z}_{a}$ corresponds to the total mass of the exit measure of 
the historical process $(\mathbf{X}_t)_{t\geq 0}$
from $(a,\infty)$ (for our present purposes, we do not need this interpretation).  
We also note that, for every $a\leq 0$, we have
$$\mathscr{Y}_{a}=\int_0^\infty \rd t\,\int \mathbf{X}_t(\rd \w)\,\mathbf{1}_{\{\tau_a(\w)=\infty\}},$$
and the right-hand side is the total integrated mass of those historical paths that do not hit $a$.

As previously, $X=(X_t)_{t\geq 0}$ denotes a continuous-state branching process
with branching mechanism $\psi(u)=\sqrt{8/3}\,u^{3/2}$ that starts from $r$
under the probability measure $P_r$. We will use the ``L\'evy-Khintchine representation'' for $\psi$:
we have
$$\psi(u)=\int \kappa(dy)\,(e^{-\lambda y}-1+\lambda y)$$
where $\kappa(dy)$ is the measure on $(0,\infty)$ given by
$$\kappa(dy)= \sqrt{\frac{3}{2\pi}}\,y^{-5/2}\,dy.$$

\begin{proposition}
\label{jointfixedtime}
Let $a>0$. The law under $\P_{r\delta_0}$ of the pair $(\mathscr{Z}_{-a},\mathscr{Y}_{-a})$ 
coincides with the law under $P_r$ of the pair
\begin{equation}
\label{jointjump}
\Big(X_a,\sum_{i:s_i\leq a} \xi_i\,(\Delta X_{s_i})^2\Big)
\end{equation}
where $s_1,s_2,\ldots$ is a measurable enumeration of the jumps of $X$, and 
$\xi_1,\xi_2,\ldots$ is a sequence of i.i.d. real random variables with density
$$\frac{1}{\sqrt{2\pi x^5}}\,e^{-1/2x}\,\ind{(0,\infty)}(x),$$
which is independent of the process $(X_t)_{t\geq 0}$.
\end{proposition}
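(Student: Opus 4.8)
I would prove the identity in law by computing the joint Laplace transform of both pairs and checking that they agree; since $(\mathscr{Z}_{-a},\mathscr{Y}_{-a})$ and $\big(X_a,\sum_{i:s_i\leq a}\xi_i(\Delta X_{s_i})^2\big)$ take values in $[0,\infty)^2$, this suffices. Fix $\lambda,\mu\geq 0$. For the left-hand side, $\n=\sum_k\delta_{\omega_{(k)}}$ is a Poisson measure with intensity $r\N_0$ while $\mathscr{Z}_{-a}=\sum_k\z_{-a}(\omega_{(k)})$ and $\mathscr{Y}_{-a}=\sum_k\y_{-a}(\omega_{(k)})$, so the exponential formula gives
$$\E_{r\delta_0}\big[\exp(-\lambda\mathscr{Z}_{-a}-\mu\mathscr{Y}_{-a})\big]=\exp\big(-r\,\N_0(1-\exp(-\lambda\z_{-a}-\mu\y_{-a}))\big).$$
By the spatial translation invariance of the Brownian snake (that is, $\N_a=(\omega\mapsto\omega+a)_*\N_0$), under $\N_0$ the pair $(\z_{-a},\y_{-a})$ has the same law as $(\z_0,\y_0)$ under $\N_a$, so $\N_0(1-\exp(-\lambda\z_{-a}-\mu\y_{-a}))=u_{\lambda,\mu}(a)$ in the notation of Lemma~\ref{joint-Laplace}. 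Hence the left-hand side equals $\exp(-r\,u_{\lambda,\mu}(a))$.

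For the right-hand side, I would first integrate over the i.i.d.\ variables $\xi_i$ (independent of $X$), which reduces the expectation to $\E_{P_r}\big[e^{-\lambda X_a}\prod_{i:s_i\leq a}g(\Delta X_{s_i})\big]$ where $g(w):=\E[e^{-\mu\xi_1 w^2}]=\int_0^\infty e^{-\mu w^2 x}\,\frac{1}{\sqrt{2\pi x^5}}\,e^{-1/2x}\,dx$. A short computation (for instance via the modified Bessel function $K_{3/2}$) gives $g(w)=(1+\sqrt{2\mu}\,w)\,e^{-\sqrt{2\mu}\,w}$; in particular $g(0)=1$ and $g(w)=1-\mu w^2+O(w^3)$ as $w\to 0$. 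Since $g(0)=1$ and the jumps of the sum of two independent copies of $X$ form the disjoint union of the two jump sets, the functional $e^{-\lambda X_a}\prod_i g(\Delta X_{s_i})$ is multiplicative under the branching property of $X$, so $\E_{P_x}\big[e^{-\lambda X_a}\prod_i g(\Delta X_{s_i})\big]=\exp(-x\,w_a)$ for some $w_a=w_a(\lambda,\mu)\geq 0$ with $w_0=\lambda$. Applying the Markov property of $X$ and its ``discounted generator'' $\mathcal A f(x)=x\int_0^\infty(g(y)f(x+y)-f(x)-yf'(x))\,\kappa(dy)$ — for which $\mathcal A f(x)=xe^{-vx}\Psi(v)$ when $f(x)=e^{-vx}$, with $\Psi(v):=\int_0^\infty(g(y)e^{-vy}-1+vy)\,\kappa(dy)$ — one finds that $(w_a)_{a\geq 0}$ solves the first-order ODE $w_a'=-\Psi(w_a)$, $w_0=\lambda$.

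To finish, I would identify $w_a$ with $u_{\lambda,\mu}(a)$. Writing the integrand of $\Psi$ as $(e^{-cy}-1+cy)-\sqrt{2\mu}\,y(1-e^{-cy})$ with $c=v+\sqrt{2\mu}$, and using $\int_0^\infty(e^{-u}-1+u)u^{-5/2}\,du=\tfrac43\sqrt\pi$ and $\int_0^\infty(1-e^{-u})u^{-3/2}\,du=2\sqrt\pi$ together with $\kappa(dy)=\sqrt{3/(2\pi)}\,y^{-5/2}\,dy$, one gets $\Psi(v)=\tfrac23\sqrt{3/2}\,(2v-\sqrt{2\mu})(v+\sqrt{2\mu})^{1/2}$, hence $\Psi(v)^2=\tfrac83 v^3-4\mu v+\tfrac23(2\mu)^{3/2}$. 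On the other hand, multiplying the equation $\tfrac12 u''=2u^2-\mu$ of \eqref{equadiff} by $u'$ and integrating, using $u_{\lambda,\mu}(\infty)=\sqrt{\mu/2}$ and $u_{\lambda,\mu}'(\infty)=0$, gives $(u_{\lambda,\mu}')^2=\tfrac83 u_{\lambda,\mu}^3-4\mu u_{\lambda,\mu}+\tfrac23(2\mu)^{3/2}=\Psi(u_{\lambda,\mu})^2$. Comparing signs via the monotonicity of $u_{\lambda,\mu}$ (decreasing, constant or increasing according as $\lambda>\sqrt{\mu/2}$, $\lambda=\sqrt{\mu/2}$ or $\lambda<\sqrt{\mu/2}$, which matches the sign of $-\Psi$ since $\Psi(v)>0\Leftrightarrow v>\sqrt{\mu/2}$) shows that $u_{\lambda,\mu}$ solves $u'=-\Psi(u)$ with $u(0)=\lambda$, the same ODE as $w$. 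As $\Psi$ is locally Lipschitz on $[0,\infty)$, uniqueness yields $w_a=u_{\lambda,\mu}(a)$, so the right-hand side equals $\exp(-r\,u_{\lambda,\mu}(a))$ as well, and the two Laplace transforms coincide.

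The main obstacle is organisational rather than conceptual. The delicate point is to justify the evolution equation $w_a'=-\Psi(w_a)$ rigorously: because $X$ has infinite jump activity there is no first jump to condition on, so one must argue through the backward (Kolmogorov) equation or the martingale problem for the discounted semigroup rather than by an elementary waiting-time argument. The remaining work — evaluating the two auxiliary special-function integrals that define $g$ and $\Psi$, and recognising that $\Psi^2$ is exactly the first integral of the ODE \eqref{equadiff} satisfied by $u_{\lambda,\mu}$ — is routine but is precisely what makes both sides collapse to the single explicit function $\exp(-r\,u_{\lambda,\mu}(a))$.
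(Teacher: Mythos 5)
Your proposal is correct and follows essentially the same route as the paper: compute both joint Laplace transforms, reduce the Poisson side to $\exp(-r\,u_{\lambda,\mu}(a))$ and the CSBP side to $\exp(-r\,w_a(\lambda))$ via the branching property, and match the two cumulant flows through the generator computation involving $\kappa$ and the Laplace transform $(1+\sqrt{2\beta})e^{-\sqrt{2\beta}}$ of $\xi$ (the paper justifies the infinitesimal step exactly as you anticipate, by integration by parts and the dual predictable projection $X_u\,\rd u\,\kappa(\rd x)$ of the jump measure). The only cosmetic difference is the final identification: the paper differentiates the closed-form expressions of Lemma~\ref{joint-Laplace} at $a=0$ and compares with the explicit $\kappa$-integral, whereas you recognise $\Psi^2$ as the first integral of \eqref{equadiff} and invoke ODE uniqueness — both are valid and your computation of $\Psi$ agrees with \eqref{derivee1}.
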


\begin{proof} We first observe that, for every $\lambda,\mu>0$, we have
$$\E_{r\delta_0}[\exp(-\lambda \mathscr{Z}_{-a}-\mu \mathscr{Y}_{-a})]=\exp(-r u_{\lambda,\mu}(a))$$
by the exponential formula for Poisson measures. We will prove that the
joint Laplace transform of the pair $\eqref{jointjump}$ is given by
the same expression.

To this end, we fix $\mu>0$ and write $\alpha=\sqrt{2\mu}$ to simplify
notation. We also set $w_a(\lambda)=u_{\lambda,\mu}(a)$ for every $a\geq 0$.
As a consequence of \eqref{equadiff} (or directly from Lemma \ref{joint-Laplace})
we have for every $a,b\geq 0$,
$$w_{a+b}=w_a\circ w_b$$
and $w_0(\lambda)=\lambda$. Furthermore, the derivative of $w_a(\lambda)$ at 
$a=0$ is easily computed from the formulas of Lemma \ref{joint-Laplace}:
\begin{equation}
\label{derivee1}
\frac{\rd}{\rd a}w_a(\lambda)_{|a=0}= \sqrt{\frac{2}{3}}\,\sqrt{\alpha +\lambda}\,(\alpha-2\lambda)
\end{equation}
where we recall that $\alpha=\sqrt{2\mu}$. 

Let us consider now the Laplace transform of the pair $\eqref{jointjump}$. We first observe
that the Laplace transform of the variables $\xi_i$ is given by
$$E[e^{-\beta \xi}]= (1+\sqrt{2\beta})\,e^{-\sqrt{2\beta}},$$
for every $\beta\geq 0$ (note that $E[\xi e^{-\beta \xi}]= e^{-\sqrt{2\beta}}$ by the well-known formula for
the Laplace transform of a positive stable random variable with parameter $1/2$). It follows that, for
every $\lambda >0$,
$$
E_r\Big[\exp\Big(-\lambda X_a- \mu \sum_{i:\xi_i\leq a} \xi_i(\Delta X_{s_i})^2\Big)\Big]
= E_r\Big[\exp(-\lambda X_a)\;\prod_{0\leq s\leq a} (1+\alpha \Delta X_s)e^{-\alpha \Delta X_s}\Big].
$$
The additivity property of continuous-state branching processes allows us to write
the right-hand side in the form $\exp(-rv_a(\lambda))$, where the function $v_a(\lambda)$
(which of course depends also on $\alpha$) is such that $v_0(\lambda)=\lambda$. The Markov
property of $X$ readily gives the semigroup property
$$v_{a+b}=v_a\circ v_b$$
for every $a,b\geq 0$. To complete the proof of the proposition, it suffices to verify that 
$w_a=v_a$ for every $a\geq 0$, and to this end it will be enough to prove that
\begin{equation}
\label{claimjoint}
\frac{\rd}{\rd a}w_a(\lambda)_{|a=0}=\frac{\rd}{\rd a}v_a(\lambda)_{|a=0}.
\end{equation}
The left-hand side is given by \eqref{derivee1}. Let us compute the right-hand side. We fix
$\lambda >0$ in what follows. 

As we already mentioned, the process $X$ is a Feller process with values in $[0,\infty)$. The
exponential function $\varphi_\lambda(x)= e^{-\lambda x}$ belongs to the domain of the generator
$\mathcal{L}$ of $X$, and
$$\mathcal{L}\varphi_\lambda(x)= \psi(\lambda)\,x\,\varphi_\lambda(x),$$
as a straightforward consequence of the formula for the Laplace transform of $X_t$. Consequently,
we have
$$e^{-\lambda X_t}= e^{-\lambda r} +M_t + \psi(\lambda)\int_0^t X_s\,e^{-\lambda X_s}\,\rd s,$$
where $M$ is a martingale, which is clearly bounded on every compact interval. For every $t\geq 0$, set
$$V_t:=\prod_{0\leq s\leq t} (1+\alpha \Delta X_s)e^{-\alpha \Delta X_s},$$
and note that $V$ is a nonnegative nonincreasing process, which is bounded by one. By applying 
the integration by parts formula, we have
\begin{equation}
\label{integparts}
V_te^{-\lambda X_t}= e^{-\lambda r}+ \int_0^t V_{s-}\,\rd M_s + \psi(\lambda)\int_0^t V_sX_s\,e^{-\lambda X_s}\rd s + \int_0^t e^{-\lambda X_s}\,\rd V_s.
\end{equation}
The martingale term $\int_0^t V_{s-}\,dM_s$ has zero expectation. Let us evaluate the expected value
of the last term
$$ \int_0^t e^{-\lambda X_s}\,\rd V_s=\sum_{0\leq s\leq t} e^{-\lambda X_s}\Delta V_s
=\sum_{0\leq s\leq t} e^{-\lambda X_{s-}}V_{s-} \times e^{-\lambda \Delta X_s}\Big(
(1+\alpha \Delta X_s)e^{-\alpha \Delta X_s} -1\Big).$$
We note that the dual predictable projection of the random measure 
$$\sum_{s\geq 0,\Delta X_s>0} \delta_{(s,\Delta X_s)}(\rd u,\rd x)$$
is the measure
$$X_u\,\rd u\,\kappa(\rd x)$$
where we recall that $\kappa(dx)$ is the ``L\'evy measure'' associated with $X$ (a simple way 
to get this is to use the Lamperti transformation to represent $X$ as a time-change of the
L\'evy process with L\'evy measure $\kappa$). It follows that
$$E\Big[ \int_0^t e^{-\lambda X_s}\,\rd V_s\Big]
= E\Big[\int_0^t X_sV_s e^{-\lambda X_s}\,\rd s\Big] \times \int \kappa(\rd x)\,e^{-\lambda x}
\Big((1+\alpha x)e^{-\alpha x} -1\Big).$$
By taking expectations in \eqref{integparts}, we thus get
$$e^{-rv_t(\lambda)}-e^{-rv_0(\lambda)}
=E\Big[\int_0^t X_sV_s e^{-\lambda X_s}\,\rd s\Big] \times\Big( \psi(\lambda)+ \int \kappa(\rd x)\,e^{-\lambda x}
\Big((1+\alpha x)e^{-\alpha x} -1\Big)\Big).$$
Note that 
$$\frac{1}{t} E\Big[\int_0^t X_sV_s e^{-\lambda X_s}\,\rd s\Big]  
\build{\la}_{t\downarrow 0}^{} r\,e^{-\lambda r},$$
and thus it immediately follows from the preceding display that
\begin{align*}
\frac{\rd}{\rd a}v_a(\lambda)_{|a=0}&= - \psi(\lambda)-\int \kappa(\rd x)\,e^{-\lambda x}
\Big((1+\alpha x)e^{-\alpha x} -1\Big)\\
&= - \int \kappa(\rd x)\,\Big((1+\alpha x)e^{-(\alpha +\lambda)x} -1 +\lambda x\Big).
\end{align*}
From the expression of $\kappa$, straightforward calculations lead to the formula
$$\int \kappa(\rd x)\,\Big((1+\alpha x)e^{-(\alpha +\lambda)x} -1 +\lambda x\Big)= -\sqrt{\frac{2}{3}}\,\sqrt{\alpha +\lambda}\,(\alpha-2\lambda)$$
and our claim \eqref{claimjoint} follows, recalling \eqref{derivee1}. This completes the proof.
\end{proof}

With the notation introduced in Proposition \ref{jointfixedtime}, set 
for every $a\geq 0$,
$$Y_a:=\sum_{i:s_i< a} \xi_i\,(\Delta X_{s_i})^2.$$

\begin{corollary}
\label{lawprocess}
The law of the process $(\mathscr{Z}_{-a},\mathscr{Y}_{-a})_{a\geq 0}$ under
$\P_{r\delta_0}$ coincides with the law of $(X_a,Y_a)_{a\geq 0}$ under $P_r$.
\end{corollary}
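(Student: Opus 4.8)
The plan is to show that both processes are time-homogeneous Markov processes, that they start from the same point $(r,0)$, that they have the same transition kernel, and finally that they both admit c\`adl\`ag modifications; equality in law then follows since the law of a c\`adl\`ag process is determined by its finite-dimensional marginals. The equality of one-dimensional time marginals is already provided by Proposition \ref{jointfixedtime}, so the real work is to upgrade this to the transition kernels.

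First I would check that $(\mathscr{Z}_{-a},\mathscr{Y}_{-a})_{a\geq 0}$ is Markov under $\P_{r\delta_0}$. Fix $0<a<b$. Recall that $\mathscr{Z}_{-c}$ and $\mathscr{Y}_{-c}$ are $\mathcal{E}_c$-measurable (for $\mathscr{Z}_{-c}$ this was noted above; for $\mathscr{Y}_{-c}$ it follows similarly, the historical paths not hitting $-c$ being entirely retained by $\mathcal{E}_c$), and that $\mathcal{E}_c\subset\mathcal{E}_a$ for $c\leq a$. By the special Markov property, in the form valid for the Poisson superposition of Brownian snakes with intensity $r\N_0$, conditionally on $\mathcal{E}_a$ the excursions of the snake outside $(-a,\infty)$, which all start from $-a$, form a Poisson point measure $\mathcal{M}_a$ with intensity $\mathscr{Z}_{-a}\,\N_{-a}$. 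Now $\mathscr{Z}_{-b}$ is a functional of $\mathcal{M}_a$ (every historical path reaching $-b$ first reaches $-a$), and so is $\mathscr{Y}_{-b}-\mathscr{Y}_{-a}$: a historical path that does not hit $-b$ either avoids $-a$, in which case it already contributes to $\mathscr{Y}_{-a}$, or it reaches $-a$, in which case the portion of the path after its exit time from $(-a,\infty)$ is described by one of the excursions in $\mathcal{M}_a$, and, the part of the path before the exit time staying in $(-a,\infty)\subset(-b,\infty)$, the path avoids $-b$ if and only if that excursion does. Hence, conditionally on $\mathcal{E}_a$, the pair $(\mathscr{Z}_{-b},\mathscr{Y}_{-b}-\mathscr{Y}_{-a})$ is distributed as $(\mathscr{Z}_{-(b-a)},\mathscr{Y}_{-(b-a)})$ computed for a super-Brownian motion started from $\mathscr{Z}_{-a}\delta_{-a}$; using the translation invariance of Brownian motion, hence of the measures $\N_y$, of the exit measures and of the functionals $\z_\cdot,\y_\cdot$, this is the same as the law under $\P_{z\delta_0}$ of $(\mathscr{Z}_{-(b-a)},\mathscr{Y}_{-(b-a)})$, evaluated at $z=\mathscr{Z}_{-a}$. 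Since $\mathscr{Y}_{-a}$ is $\mathcal{E}_a$-measurable, this shows that $(\mathscr{Z}_{-a},\mathscr{Y}_{-a})_{a\geq 0}$ is Markov (with respect to $(\mathcal{E}_a)$, hence with respect to its own filtration), that its first coordinate is autonomous, and that its transition kernel from $a$ to $a+b$ maps $(z,y)$ to the law of $(z',\,y+y')$, where $(z',y')$ has the law of $(\mathscr{Z}_{-b},\mathscr{Y}_{-b})$ under $\P_{z\delta_0}$.

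Next I would observe that $(X_a,Y_a)_{a\geq 0}$ under $P_r$ is Markov for a parallel reason: $X$ is a Markov process, and, conditionally on $X$, one may attach to the $i$-th jump of $X$ the independent mark $\xi_i$, so that $Y$ becomes an additive functional of the resulting marked jump process. Thus $(X_a,Y_a)$ is Markov, its first coordinate is autonomous, and its transition kernel from $a$ to $a+b$ maps $(x,y)$ to the law, under $P_x$, of $\big(X_b,\,y+\sum_{s_i\leq b}\xi_i(\Delta X_{s_i})^2\big)$ (for fixed $b$ the time $b$ is a.s.\ not a jump time, so it is immaterial whether the sum runs over $s_i<b$ or $s_i\leq b$). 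Now Proposition \ref{jointfixedtime}, applied with $r$ replaced by an arbitrary $z>0$, says precisely that the law of $(\mathscr{Z}_{-b},\mathscr{Y}_{-b})$ under $\P_{z\delta_0}$ equals the law of $\big(X_b,\sum_{s_i\leq b}\xi_i(\Delta X_{s_i})^2\big)$ under $P_z$; hence the two transition kernels coincide, and since both processes start from $(r,0)$ they have the same finite-dimensional marginals. Finally, $(\mathscr{Z}_{-a})_{a\geq 0}$ admits the c\`adl\`ag modification fixed earlier, $(\mathscr{Y}_{-a})_{a\geq 0}$ is nondecreasing, $X$ is c\`adl\`ag and $Y$ is nondecreasing; a c\`adl\`ag process being determined in law by its finite-dimensional marginals, we conclude that the two processes have the same distribution.

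The step I expect to require the most care is the identification, inside the special Markov property argument, of $\mathscr{Y}_{-b}-\mathscr{Y}_{-a}$ as a functional of the excursions below level $-a$ -- that is, the decomposition of the integrated mass of non-exiting historical paths according to whether they first reach level $-a$, together with the bookkeeping that matches the level-$t$ local times and the ``avoids $-b$'' events before and after the translation by $+a$. Granting that, the Markov property of $(\mathscr{Z}_{-a},\mathscr{Y}_{-a})$ and the explicit form of its transition kernel follow routinely, and the conclusion is then just an appeal to Proposition \ref{jointfixedtime}.
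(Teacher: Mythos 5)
Your proposal is correct and follows essentially the same route as the paper's proof: both arguments establish that $(\mathscr{Z}_{-a},\mathscr{Y}_{-a})$ is time-homogeneous Markov via the special Markov property, that $(X_a,Y_a)$ is Markov with the transition kernel $\Psi_b(z,y)=E_z[g(X_b,y+Y_b)]$, and then identify the two kernels by applying Proposition \ref{jointfixedtime} with an arbitrary starting mass $z$. You simply supply more of the details (the decomposition of $\mathscr{Y}_{-b}-\mathscr{Y}_{-a}$ over the excursions below $-a$, and the irrelevance of $s_i<b$ versus $s_i\leq b$) that the paper leaves to the reader.
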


\begin{proof} An application of the special Markov property shows that the 
process $(\mathscr{Z}_{-a},\mathscr{Y}_{-a})_{a\geq 0}$ is (time-homogeneous)
Markov under $\P_{r\delta_0}$, with transition kernel given by
$$\E_{r\delta_0}[ g(\mathscr{Z}_{-a-b},\mathscr{Y}_{-a-b}) \mid (\mathscr{Z}_{-a},\mathscr{Y}_{-a})]
=\Phi_b(\mathscr{Z}_{-a},\mathscr{Y}_{-a}),$$
where
$$\Phi_b(z,y)=\E_{z\delta_0}[g(\mathscr{Z}_{-b},y+\mathscr{Y}_{-b})].$$
On the other hand, the Markov property of the continuous-state branching process
$X$ also shows that the process $(X_a,Y_a)_{a\geq 0}$ is Markov under $P_r$, and
$$E_r[g(X_{a+b},Y_{a+b})\mid (X_a,Y_a)] = \Psi_b(X_a,Y_a),$$
where 
$$\Psi_b(z,y)=E_z[g(X_b,y+Y_b)].$$
By Proposition \ref{jointfixedtime}, we have $\Phi_b=\Psi_b$, and the desired result follows.
\end{proof}

\noindent{\bf Remark.} We chose to put a strict inequality $s_i<a$ in the definition of
$Y_a$ so that the process $Y$ has left-continuous paths, which is also the case for $\mathscr{Y}_{-a}$.
On the other hand, both $\mathscr{Z}_{-a}$ and $X_a$ have right-continuous paths. 

\medskip
\noindent{\it Proof of Theorem \ref{hull-process-description}}. Fix $\rho>0$, and let $U$ follow a Gamma distribution 
with parameter $\frac{3}{2}$ and mean $\rho^2$, so that $U$ has the same distribution as $Z_\rho$,
by Proposition
\ref{process-exit} (i). Suppose that, conditionally given $U$, $\mathcal{N}$ is a Poisson point measure with
intensity $U\,\N_0$ under the probability measure $\P$. We can use formulas \eqref{SBMsnake} and \eqref{historicalsnake} to define a 
super-Brownian motion $(\x_t)_{t\geq 0}$ started from $U\,\delta_0$ and the associated historical superprocess. We then define
 $(\mathscr{Z}_{a},\mathscr{Y}_{a})_{a\leq 0}$
as in the beginning of this subsection. We also write $S$ for the
extinction time of $\x$.

The arguments used
in the proof of Theorem \ref{laws-hull}, based on our representation
of the Brownian plane and formula \eqref{formula-hull}, show that the process
$(Z_{\rho-a},|B^\bullet_\rho|-|B^\bullet_{\rho-a}|)_{0\leq a\leq \rho}$ has the same
distribution as $(\mathscr{Z}_{-a},\mathscr{Y}_{-a})_{0\leq a\leq \rho}$ under
$\P(\cdot \mid S=\rho)$. For a precise justification, note that $B^\bullet_\rho\backslash B^\bullet_{\rho-a}$
is the image under $\Pi$ of those $x\in \t_\infty$ such that $\Lambda_y> \rho-a$
for every $y\in\llbracket x,\infty\llbracket$ and there exists  $z\in\llbracket x,\infty\llbracket$
such that $\Lambda_z\leq \rho$. If $x$ satisfies these properties, either $x$ belongs to
one of the subtrees branching off the spine at a level belonging to $] L_{\rho-a},L_\rho
]$, or $x$ belongs to
one of the subtrees branching off the spine at a level greater than $L_\rho$ but the label of 
one of the ancestors of $x$ in this subtree is less than or equal to $\rho$ (and, in both cases,
the labels of the ancestors of $x$ in the subtree containing $x$ remain strictly 
greater than $\rho-a$). The volume of the set of points $x$ corresponding to the
second case is handled via the  special Markov property for the domain $(\rho,\infty)$,
in a way similar to the end of the proof of Theorem \ref{laws-hull}.
We obtain that
the sum of the two contributions leads to the quantity $\mathscr{Y}_{-a}$ for a super-Brownian 
motion starting from $Z_\rho\delta_0$ and conditioned on extinction at time $\rho$. 

Write $P_{(U)}$ for a probability measure under which the continuous-state branching process
$X$ starts from $U$ (and the process $Y$ is constructed by the formula preceding
Corollary \ref{lawprocess}), and let $T$ be the extinction time of $X$ as previously. Also set
for every $a\geq 0$,
$$\wt Y_a=\sum_{i:\tilde s_i\geq -a} \xi_i\,(\Delta \wt X_{\tilde s_i})^2,$$
where $\wt s_1,\wt s_2,\ldots$ is a measurable enumeration of the jumps of $\wt X$, and the 
random variables $\xi_i$ are as in Proposition \ref{jointfixedtime} and are supposed to be
independent of $\wt X$. 

From
Corollary \ref{lawprocess}, we obtain that the law of $(\mathscr{Z}_{-a},\mathscr{Y}_{-a})_{0\leq a\leq \rho}$ under
$\P(\cdot \mid S=\rho)$ coincides with the law of $(X_a,Y_a)_{0\leq a\leq \rho}$ under 
$P_{(U)}(\cdot \mid T=\rho)$. However, using the final observation of the
proof of Proposition \ref{process-exit} (ii), the latter law is also the law of 
$(\wt X_{-\rho+a}, \wt Y_\rho - \wt Y_{\rho-a})_{0\leq a\leq \rho}$.

Summarizing, we have obtained the identity in distribution
$$(Z_{\rho-a},|B^\bullet_\rho|-|B^\bullet_{\rho-a}|)_{0\leq a\leq \rho}
\build{=}_{}^{\rm(d)} (\wt X_{-\rho+a}, \wt Y_\rho - \wt Y_{\rho-a})_{0\leq a\leq \rho}.$$
This immediately gives
$$(Z_{a},|B^\bullet_{a}|)_{0\leq a\leq \rho}
\build{=}_{}^{\rm(d)} (\wt X_{-a}, \wt Y_{a})_{0\leq a\leq \rho},$$
from which the statement of Theorem \ref{hull-process-description} follows. 
\hfill$\square$

\section{Asymptotics for the UIPQ}
\label{asympUIPQ}

We will rely on the Chassaing-Durhuus construction of the UIPQ \cite{CD}. The fact that
this construction is equivalent to the more usual construction involving local limits 
of finite quadrangulations can be found in \cite{Men}. The Chassaing-Durhuus construction
is based on a random infinite labeled discrete ordered tree, which we denote here by
$\T$. In a way very analogous to the tree $\t_\infty$ considered above, the tree $\T$ consists
of a spine, which is a discrete half-line, and for every vertex of the spine, of two finite subtrees
grafted at this vertex respectively to the left and to the right of the spine (if the grafted subtree
consists only of the root, this means that we add nothing). The root of $\T$
is the first vertex of the spine. The set of all corners of $\T$ is equipped with a total order 
induced by the clockwise contour exploration of the tree.
Each vertex $v$ of $\T$ is assigned a positive integer label $\ell_v$, 
in such a way that the label of the root is $1$ and the labels of two neighboring vertices may
differ by at most  $1$ in absolute value. We will not need the exact distribution of the
tree $\T$: See e.g. \cite[Section 2.3]{LGM}.

Let us now explain the construction of the UIPQ from the tree $\T$. First the vertex
set of $Q_\infty$ is the union of the vertex set $V(\T)$ of $\T$ and of an extra vertex denoted by $\partial$. 
We then generate the edges of $Q_\infty$ by the following device, which is analogous to the
Schaeffer bijection between finite (rooted) quadrangulations and well-labeled trees \cite{CS}. All corners 
of $\T$ with label $1$ are linked to $\partial$ by an edge of $Q_\infty$. Any other corner $c$
is linked by an edge of $Q_\infty$ to the last corner before $c$ (in the clockwise countour order) with strictly smaller label.
The resulting collection of edges forms an infinite quadrangulation of the plane, 
which is the UIPQ $Q_\infty$ (see Fig.~2, and \cite{CD} for more details). It easily follows from the construction that the graph distance
(in $Q_\infty$) between $\partial$ and another vertex of $Q_\infty$
is just the label of this vertex in $\T$. 

\begin{figure}[!h]
\label{d-hull}
 \begin{center}
 \includegraphics[width=8cm]{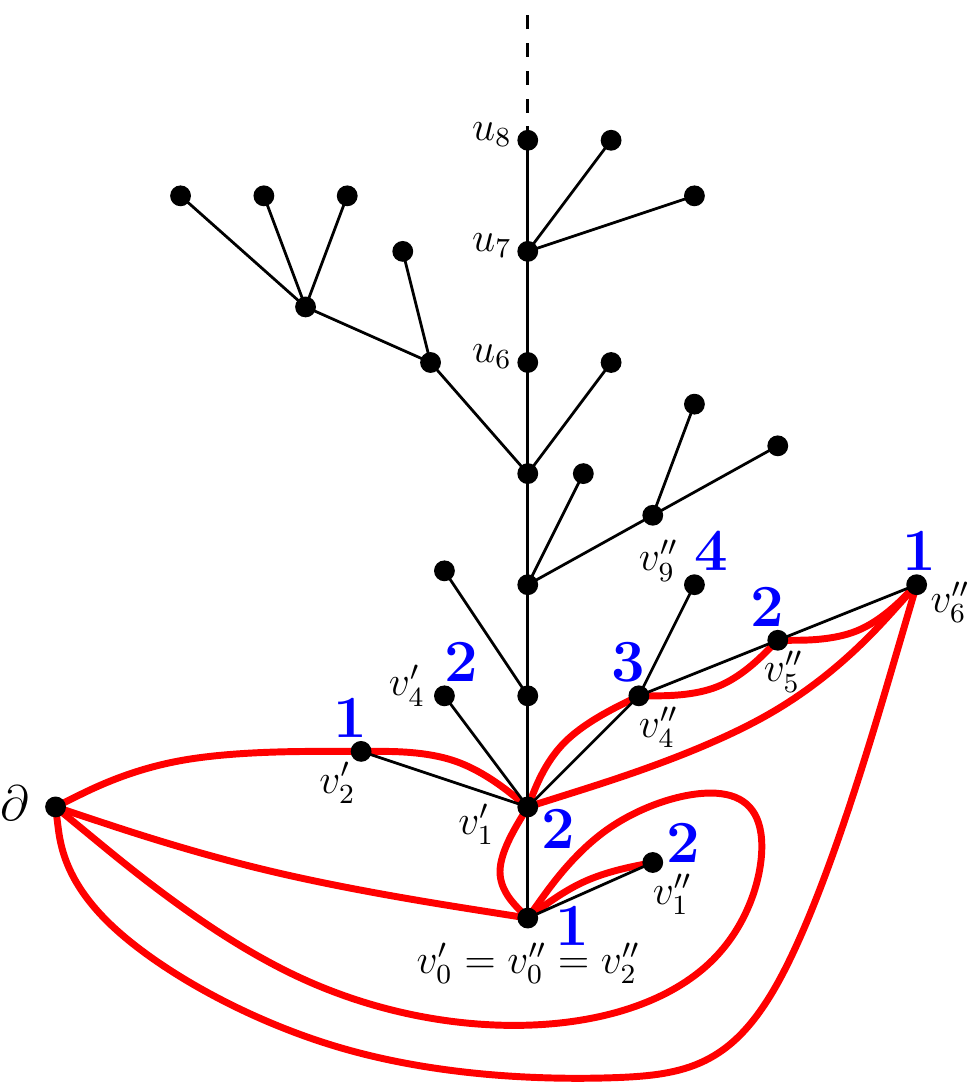}
 \caption{The Chassaing-Durhuus construction. The tree $\T$ is represented in thin lines. A few of the vertices
 $v'_k,v''_k$ have been indicated together with their labels in bold figures. The edges
 of $Q_\infty$ incident to $4$ faces have been drawn in thick lines.
 }
 \end{center}
 \vspace{-2mm}
 \end{figure}

Let us introduce the left and right contour processes. Starting from the
root of $\T$, we list all corners of the left side of $\T$ in clockwise contour order, and, for 
every $k\geq 0$, we denote the vertex corresponding to the $k$-th corner in this enumeration by $v'_k$ 
(in such a way that $v'_0$ is the root of $\T$). We then write $C^{(L)}_k$ for the generation (distance from the root in $\T$) 
of $v_k$, and $V^{(L)}_k= \ell_{v'_k}$. Note that $|C^{(L)}_{k+1}-C^{(L)}_k|=1$
for every $k\geq 0$. We define similarly 
$C^{(R)}_k$ and $V^{(R)}_k$ using the exploration in counterclockwise order of the right
side of the tree, and the analog of the sequence $(v'_k)_{k\geq 0}$ is denoted by $(v''_k)_{k\geq 0}$. By linear interpolation, we may view all four processes 
$C^{(L)},V^{(L)},C^{(R)},V^{(R)}$ as indexed by $\R_+$. A key ingredient of the 
following proof is the convergence  \cite[Theorem 5]{LGM},
\begin{equation}
\label{keyconv}
\Big(\frac{1}{k^2}C^{(L)}_{k^4s},\sqrt{\frac{3}{2}} \,\frac{1}{k}\,V^{(L)}_{k^4s},
\frac{1}{k^2}C^{(R)}_{k^4s},\sqrt{\frac{3}{2}} \,\frac{1}{k}\, V^{(R)}_{k^4s}\Big)_{s\geq 0}
\build{\la}_{k\to\infty}^{\rm (d)} \Big( h(\Theta'_s),\Lambda_{\Theta'_s},
h(\Theta''_s),\Lambda_{\Theta''_s}\Big)_{s\geq 0},
\end{equation}
where we recall that $\Theta'_s$ and $\Theta''_s$ are the exploration processes of respectively the
left and the right side of $\t_\infty$ (see subsection \ref{newrep}), and we use the notation
$h(\Theta'_s)=d_\infty(0,\Theta'_s)$ for the ``height'' of $\Theta'_s$ in $\t_\infty$. The convergence
\eqref{keyconv} holds in the sense of weak convergence of the laws on the space $C(\R_+,\R^4)$. 
We also mention another convergence in distribution concerning labels on the spine.
Write $u_n$ for the $n$-th vertex on the spine of $\T$. Then,
\begin{equation}
\label{labelspine}
\Big(\sqrt{\frac{3}{2}} \,\frac{1}{k}\,\ell_{u_{\lfloor k^2 s\rfloor}}\Big)_{s\geq 0}
\build{\la}_{k\to\infty}^{\rm (d)}
(R_s)_{s\geq 0}
\end{equation}
and this convergence in distribution holds jointly with \eqref{keyconv}. The convergence
\eqref{labelspine} can be found in \cite[Proposition 1]{LGM}. The fact that this convergence 
holds jointly with \eqref{keyconv} is clear from the proof of Theorem 5 in \cite{LGM}. 

According to \cite[Lemma 3]{LGM}, we have for every $A>0$,
\begin{equation}
\label{transient-discrete}
\lim_{K\to\infty}\Big(\sup_{k\geq 1} P\Big(\inf_{t\geq K} \frac{1}{k}\,V^{(L)}_{k^4t} < A\Big)\Big) =0,
\end{equation}
and by symmetry the analogous statement with $V^{(L)}$ replaced by $V^{(R)}$ also holds. 
Finally, we note that Lemma \ref{transience-lemma} implies
\begin{equation}
\label{transient-cts}
\lim_{s\uparrow \infty} \Lambda_{\Theta'_s} = \lim_{s\uparrow \infty} \Lambda_{\Theta''_s}=+\infty,
\quad\hbox{a.s.}
\end{equation}

\begin{figure}[!h]
\label{d-hull}
 \begin{center}
 \includegraphics[width=14cm]{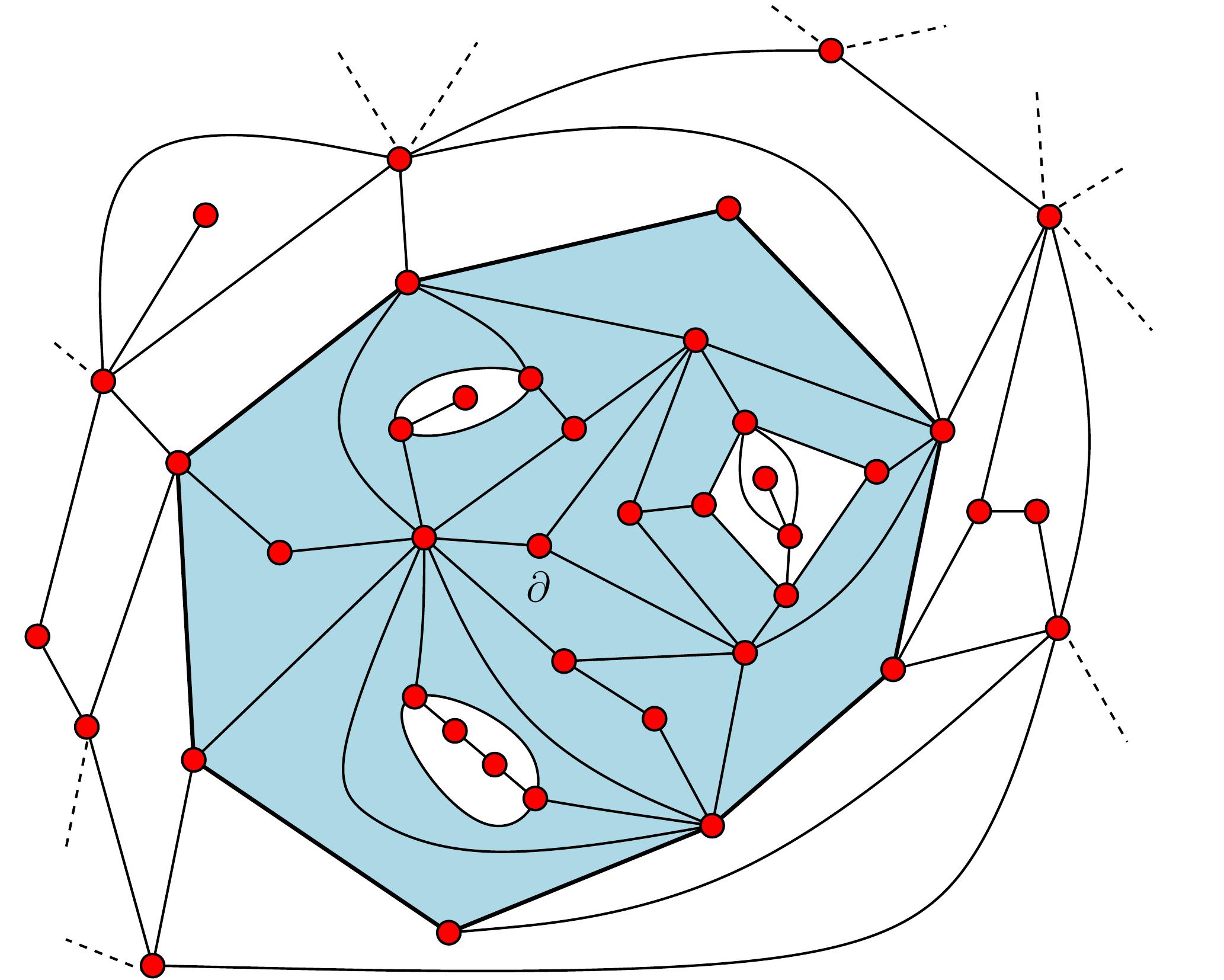}
 \caption{A representation of the UIPQ near the vertex $\partial$. The shaded part 
 is the ball $\b_2(Q_\infty)$. The hull  $\b^\bullet_2(Q_\infty)$, whose boundary is in
 thick lines on the figure, is obtained by filling in the holes of $\b_2(Q_\infty)$.}
 \end{center}
 \vspace{-2mm}
 \end{figure}

For every integer $k\geq 1$, define the ball $\b_k(Q_\infty)$ as the union of all
faces of $Q_\infty$ that are incident to (at least) one vertex at distance smaller than or 
equal to $k-1$ from $\partial$. The hull $\b^\bullet_k(Q_\infty)$ is  then obtained 
by adding to $\b_k(Q_\infty)$ the bounded components of the complement of 
$\b_k(Q_\infty)$ (see Fig.~3). Define the ``volume'' $|\b^\bullet_k(Q_\infty)|$ as the number of faces
contained in $\b^\bullet_k(Q_\infty)$.

\begin{theorem}
\label{hull-UIPQ}
We have
$$(k^{-4}\,|\b^\bullet_{\lfloor kr\rfloor}(Q_\infty)|)_{r>0} \build{\la}_{k\to\infty}^{\rm (d)} (\frac{1}{2}|B_{r\sqrt{3/2}}^\bullet(
\mathcal{P}_\infty)|)_{r>0},$$
in the sense of weak convergence of finite dimensional marginals.
\end{theorem}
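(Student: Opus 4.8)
The plan is to transfer the computation of $|\b^\bullet_k(Q_\infty)|$ to the labeled tree $\T$ of the Chassaing--Durhuus construction, in close analogy with formula \eqref{formula-hull}, and then to pass to the limit using the joint convergence \eqref{keyconv}--\eqref{labelspine} together with the transience estimates \eqref{transient-discrete} and \eqref{transient-cts}. The first step is a combinatorial lemma, obtained by inspection of the Chassaing--Durhuus and Schaeffer-type correspondences \cite{CD,CS} (in the spirit of the discrete calculations of \cite{CLG2,Kr1,Kr2}): up to a shift of the radius by $O(1)$ and up to negligible boundary effects, a vertex $v$ of $\T$ fails to belong to $\b^\bullet_k(Q_\infty)$ precisely when all labels along the path in $\T$ from $v$ to infinity remain $\geq k$. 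Since in a quadrangulation of a disk the number of inner faces differs from the number of vertices by roughly half the boundary length, and the boundary of $\b^\bullet_k(Q_\infty)$ has length of order smaller than $k^4$ (a crude bound suffices), this yields $|\b^\bullet_k(Q_\infty)| = N_k + o(k^4)$, where $N_k$ is the number of vertices of $\T$ in the region just described.

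Next I would decompose $N_k$ exactly as in the proofs of Proposition \ref{approx-exit}, Proposition \ref{condi-Markov} and Theorem \ref{laws-hull}. Writing $n_j := \max\{n \geq 0 : \ell_{u_n} \leq j\}$, so that $n_{\lfloor kr\rfloor}$ is the discrete analogue of $L_{r\sqrt{3/2}}$, one splits $N_{\lfloor kr\rfloor}$ into $(a)$ the number of vertices in the subtrees of $\T$ grafted on the spine at $u_0,\dots,u_{n_{\lfloor kr\rfloor}}$, and $(b)$ the number of vertices $v$ belonging to a subtree grafted at some $u_n$ with $n > n_{\lfloor kr\rfloor}$ and having at least one within-subtree ancestor with label $\leq \lfloor kr\rfloor$. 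After passing to a Skorokhod representation under which \eqref{keyconv} and \eqref{labelspine} hold jointly and almost surely --- which in particular gives the almost sure convergence of the left and right labeled subtree forests of $\T$ to those of $\t_\infty$, and of $k^{-2}n_{\lfloor kr\rfloor}$ to $L_{r\sqrt{3/2}}$, the last passage of $R$ at level $r\sqrt{3/2}$ --- each of $(a)$ and $(b)$, multiplied by $k^{-4}$, becomes a counting functional of the rescaled contour processes. Since the contour exploration visits the bulk of $\T$ with multiplicity $2$ (each edge being traversed twice), whereas the volume measure $\mathbf V$ on $\t_\infty$ is the push-forward of Lebesgue measure under $s\mapsto\Theta_s$, contribution $(a)$ converges to $\tfrac12$ times the total volume of the subtrees of $\t_\infty$ grafted below level $L_{r\sqrt{3/2}}$, and contribution $(b)$ to $\tfrac12$ times the volume of the set of points of $\t_\infty$ lying in a subtree grafted above that level and having an ancestor of label $\leq r\sqrt{3/2}$. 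By the discussion preceding \eqref{size-boundary} and the two-term decomposition of $|B^\bullet_a|$ used at the end of the proof of Theorem \ref{laws-hull}, the sum of these two limits is precisely $\tfrac12\,|B^\bullet_{r\sqrt{3/2}}(\pp_\infty)|$; since finitely many radii $r_1<\dots<r_m$ can be treated jointly by the same argument, this yields the asserted convergence of finite-dimensional marginals (only these are claimed, the jump structure of $r\mapsto|B^\bullet_r(\pp_\infty)|$ making a functional statement in the Skorokhod sense more delicate).

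The main difficulty is the limiting argument for contribution $(b)$: a single subtree of $\T$ grafted high on the spine may carry volume of order $k^4$ even though it only barely descends to a label $\leq \lfloor kr\rfloor$, so the relevant functional is not continuous for the uniform topology on contour paths and one needs uniform control on the tails of these contributions. This is exactly what the transience estimates supply. By \eqref{transient-discrete} and its right-hand analogue, for $K$ large the subtrees explored after contour time $k^4K$ that reach a label $\leq \lfloor kr\rfloor$ contribute, uniformly in $k$ and with probability close to one, only a small volume; on the compact exploration window $[0,K]$ the Skorokhod coupling gives genuine uniform convergence; and letting $K\uparrow\infty$ while invoking \eqref{transient-cts} for the limiting tree closes the argument. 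The remaining error terms --- the $o(k^4)$ discrepancy between faces and vertices in the combinatorial lemma, and the contribution of the subtrees grafted in a window of vanishing rescaled width around the level of $u_{n_{\lfloor kr\rfloor}}$ --- are controlled by the same kind of estimate and amount to routine verifications, like the one left to the reader at the end of the proof of Proposition \ref{approx-exit}.
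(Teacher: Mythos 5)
Your proposal is correct and rests on the same pillars as the paper's proof: the reduction of $|\b^\bullet_k(Q_\infty)|$ to counting the vertices $u$ of $\T$ whose minimal label $m(u)$ along the tree-geodesic to infinity satisfies $m(u)\leq k$ (up to an $O(1)$ shift of the radius and the faces-versus-vertices correction), the joint convergence \eqref{keyconv}--\eqref{labelspine}, the factor $\tfrac12$ coming from the multiplicity $2$ of the contour exploration, and the transience estimates \eqref{transient-discrete} and \eqref{transient-cts} to tame the discontinuity caused by subtrees grafted high on the spine that barely dip below level $k$. Where you genuinely differ is in the organization of the limit passage: the paper does not decompose along the spine at the discrete level, but instead writes $2k^{-4}(\#\{u:m(u)\leq ak\}-1)$ as the occupation integral $\int_0^\infty \mathbf{1}_{\{m(v'_{k^4s})\leq ak\}}\,\rd s$ plus its right-side analogue, proves the fixed-$s$ convergence \eqref{keyargument} by truncating the geodesic at $u_{Ak^2}$, and then passes to the integrals by the method of moments; your decomposition into contributions (a) and (b) mirrors the continuum computation of Theorem \ref{laws-hull} and requires instead a Skorokhod coupling together with the a.s.\ continuity of the last-passage functional behind $k^{-2}n_{\lfloor kr\rfloor}\to L_{r\sqrt{3/2}}$. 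Both routes work, but the method of moments spares one from making precise your ``almost sure convergence of the labeled subtree forests'': this does not follow from uniform convergence of the contour/label pairs alone, since the functional in (b) involves minima of labels along ancestral lines; one needs convergence of the associated discrete snakes, which the paper obtains from a variant of the homeomorphism theorem of \cite{MM} and which you should invoke at the corresponding point. Finally, the implication $m(u)\leq k\Rightarrow u\in\b^\bullet_k(Q_\infty)$ is not a matter of mere inspection: it uses the discrete cactus bound of \cite{CLM}, while the converse direction is obtained by exhibiting a path to infinity through vertices of label at least $m(u)-1$; neither step is deep, but they are the combinatorial heart of the reduction and deserve explicit proof.
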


\noindent {\bf Remarks.} (i) In the companion paper \cite{CLG2}, we use the peeling process to
give a different approach to the convergence of the sequence of processes $(k^{-4}\,|\b^\bullet_{\lfloor kr\rfloor}(Q_\infty)|)_{r>0}$.
The limit then appears in the form given in Theorem \ref{hull-process-description}.

\noindent (ii)  By scaling, the processes $(\frac{1}{2}|B_{r\sqrt{3/2}}^\bullet(
\mathcal{P}_\infty)|)_{r>0}$ and $(|B_{(9/8)^{1/4}r}^\bullet(
\mathcal{P}_\infty)|)_{r>0}$ have the same distribution, and we recover the ``usual''
constant $(9/8)^{1/4}$ (see e.g. \cite{CS}). The reason for stating the theorem in
the form above is the fact that the convergence then holds jointly with \eqref{keyconv}
or \eqref{labelspine}, as the proof will show. 

\begin{proof} 
Instead of dealing with $|\b^\bullet_{\lfloor kr\rfloor}(Q_\infty)|$ we will consider the
quantity $\|\b^\bullet_{\lfloor kr\rfloor}(Q_\infty)\|$ defined as the number of vertices that
are incident to a face of $\b^\bullet_{\lfloor kr\rfloor}(Q_\infty)$. It is an easy exercise to
verify that the desired convergence will follow if we can prove that the statement holds
when $|\b^\bullet_{\lfloor kr\rfloor}(Q_\infty)|$ is replaced by $\|\b^\bullet_{\lfloor kr\rfloor}(Q_\infty)\|$
(the underlying idea is the fact that a finite quadrangulation with $n$ faces has 
$n+2$ vertices, and we also observe that, for every fixed $r>0$, the size of the boundary of $\b^\bullet_{\lfloor kr\rfloor}(Q_\infty)$
is negligible with respect to $k^4$ -- this is clear if we know that the sequence $(k^{-4}\|\b^\bullet_{\lfloor kr\rfloor}(Q_\infty)\|)_{r>0}$
converges to a limit which is continuous in probability). 

We will verify that, if $r>0$ is fixed, the sequence
$k^{-4}\|\b^\bullet_{\lfloor kr\rfloor}(Q_\infty)\|$ converges in distribution to $\frac{1}{2}|B_{r\sqrt{3/2}}^\bullet(
\mathcal{P}_\infty)|$. It will be clear that our method extends to a joint
convergence in distribution if we consider a finite number
of values of $r$, yielding the desired statement. To simplify the presentation, we take $r=1$
in what follows.
So our goal is to show that
\begin{equation}
\label{claim-UIPQ}
k^{-4}\|\b^\bullet_{k}(Q_\infty)\|\build{\la}_{k\to\infty}^{\rm (d)} \frac{1}{2}|B_{\sqrt{3/2}}^\bullet(
\mathcal{P}_\infty)|.
\end{equation}

If $u\in V(\T)$, write ${\rm Geo}(u\to\infty)$ for the geodesic path from $u$ to $\infty$ in $\T$, and 
set 
$$m(u):=\min\{\ell_v: v\in {\rm Geo}(u\to\infty)\}.$$
Let $k\geq 1$. We note that:
\begin{enumerate}
\item[\rm(i)] The condition $m(u)\geq k +3$ ensures that $x\notin \b^\bullet_{k}(Q_\infty)$. Indeed,
from the way edges of $Q_\infty$ are generated, it is easy to construct a path of $Q_\infty$ from 
$u$ to $\infty$ that visits only vertices at distance (at least) $m(u)-1$ from $\partial$. If $m(u)-1\geq k+2$,
none of these vertices can be incident to a face of $\b_{k}(Q_\infty)$.
\item[\rm(ii)] If $m(u)\leq k$ then $x\in \b^\bullet_{k}(Q_\infty)$. This is an immediate 
consequence of the discrete ``cactus bound'' (see \cite[Proposition 4.3]{CLM},
in a slightly different setting), which implies that
any path of $Q_\infty$ going from $u$ to $\infty$ visits a vertex at distance less than or equal to $m(u)$ from $\partial$.
\end{enumerate}

Recall our definition of the ``contour sequence'' $(v'_k)_{k\geq 0}$ of the left side of the tree.
We now extend the definition of $v'_k$ to nonnegative real indices: If $k\geq 1$ and $k-1<s<k$, 
we take $v'_s=v'_k$ if $C^{(L)}_k=C^{(L)}_{k-1}+1$ and $v'_s=v'_{k-1}$ if $C^{(L)}_k=C^{(L)}_{k-1}-1$. This definition is
motivated by the fact that we have $\int_0^\infty ds\,\mathbf{1}\{v'_s=u\} = 2$ for 
every vertex $u$ in the left side of $\T$ (not on the spine), and the same integral
is equal to $1$ if $u$ is on the spine and different from the root. We extend similarly the
definition of $v''_k$. 
 
We next observe that, for every fixed $s>0$,
\begin{equation}
\label{keyargument}
\frac{1}{k}\,m(v'_{k^4 s})\build{\la}_{k\to\infty}^{\rm (d)}
\sqrt{\frac{2}{3}}\, \min\{\Lambda_y:y\in \llbracket \Theta'_s,\infty
\llbracket\},
\end{equation}
and this convergence holds jointly with \eqref{keyconv}. The convergence
\eqref{keyargument} is essentially a
consequence of \eqref{keyconv} and \eqref{labelspine}. Let us only sketch the argument. A first technical ingredient
is to replace $m(v'_{k^4 s})$ by a truncated version obtained by replacing
${\rm Geo}(u\to\infty)$ in the definition of $m(u)$ by the geodesic from $u$
to the vertex $u_{Ak^2}$, for some large integer constant $A$. One then proves, using 
\eqref{keyconv} and \eqref{labelspine}, that
the analog of \eqref{keyargument} holds for this truncated version, with a limit equal 
to $\sqrt{2/3}\, \min\{\Lambda_y:y\in \llbracket \Theta'_s,A
\rrbracket\}$ (a convenient way is to use a minor variant of the homeomorphism theorem of \cite{MM}
to see that \eqref{keyconv} implies also the convergence of the associated ``snakes'', which is
what we need here). Finally, the fact that the convergence of truncated versions 
suffices to get \eqref{keyargument} is easy using \eqref{transient-discrete} and
\eqref{transient-cts}. 

If we consider a finite number of values of $s$, the corresponding convergences \eqref{keyargument}
hold jointly (and jointly with \eqref{keyconv}). Via the method of moments, it easily
follows that, for every $A>0$, and every $a>0$,
$$\int_0^A \rd s\,\mathbf{1}_{\{m(v'_{k^4 s} )\leq a\,k\}}
\build{\la}_{k\to\infty}^{\rm (d)} \int_0^A \rd s\,\mathbf{1}_{\{\min\{\Lambda_y:y\in \llbracket \Theta'_s,\infty
\llbracket\}\leq \sqrt{3/2}\,a\}}.$$
Thanks to \eqref{transient-discrete} and
\eqref{transient-cts}, we can replace $A$ by $\infty$ and obtain
$$\int_0^\infty \rd s\,\mathbf{1}_{\{m(v'_{k^4 s} )\leq a\,k\}}
\build{\la}_{k\to\infty}^{\rm (d)} \int_0^\infty \rd s\,\mathbf{1}_{\{\min\{\Lambda_y:y\in \llbracket \Theta'_s,\infty
\llbracket\}\leq \sqrt{3/2}\,a\}}.$$
By combining this convergence with the analogous result for the right side of the tree, we get
\begin{align}
\label{techUIPT}
&\int_0^\infty \rd s\,\mathbf{1}_{\{m(v'_{k^4 s} )\leq a\,k\}}+ \int_0^\infty \rd s\,\mathbf{1}_{\{m(v''_{k^4 s} )\leq a\,k\}}\nonumber\\
&\quad\build{\la}_{k\to\infty}^{\rm (d)} \int_0^\infty \rd s\,\mathbf{1}_{\{\min\{\Lambda_y:y\in \llbracket \Theta'_s,\infty
\llbracket\}\leq \sqrt{3/2}\,a\}}
+ \int_0^\infty \rd s\,\mathbf{1}_{\{\min\{\Lambda_y:y\in \llbracket \Theta''_s,\infty
\llbracket\}\leq \sqrt{3/2}\,a\}}.
\end{align}
By \eqref{formula-hull}, the limit in the previous display is equal to
$ |B^\bullet_{\sqrt{3/2}\,a}(\pp_\infty)|$. On the other hand, previous remarks show that, if $a\,k\geq 1$,
$$\int_0^\infty \rd s\,\mathbf{1}_{\{m(v'_{k^4 s} )\leq a\,k\}}+ \int_0^\infty \rd s\,\mathbf{1}_{\{m(v''_{k^4 s} )\leq a\,k\}}= 2\,k^{-4}(\#\{u\in V(\T): m(u)\leq a\,k\} -1).$$
Furthermore, it follows from properties (i) and (ii) stated above that
$$\#\{u\in V(\T): m(u)\leq k\} \leq \| \mathcal{B}^\bullet_k(Q_\infty)\|\leq \#\{u\in V(\T): m(u)\leq k+2\}.$$
Our claim \eqref{claim-UIPQ} now follows from the convergence \eqref{techUIPT}
and the preceding observations, together with the fact that the mapping $r\mapsto |B^\bullet_{r}(\pp_\infty)|$
is continuous in probability. This completes the proof. 
\end{proof}

Let us conclude with a comment. It would seem more direct to derive Theorem \ref{hull-UIPQ}
from the fact that the Brownian plane is the Gromov-Hausdorff scaling limit of the UIPQ
\cite[Theorem 2]{CLG}. We refrained from doing so because the local Gromov-Hausdorff convergence
does not give enough information to handle volumes of balls or hulls. It would have been 
necessary to establish a type of Gromov-Hausdorff-Prokhorov convergence in our setting, in the spirit of 
the work of Greven, Pfaffelhuber and Winter \cite{GPW}, who however consider the case
of metric spaces equipped with a probability measure. This would require a number of
additional technicalities, and for this reason we preferred to rely on the results of \cite{LGM}.

\end{document}